
\documentclass[11pt,a4paper]{article}
\usepackage[OT1]{fontenc}
\usepackage[all]{xy}

\usepackage[english]{babel}
\usepackage{amsmath}
\usepackage{amsfonts}
\usepackage{amsthm}
\topmargin0cm
\textheight22cm
\footnotesep.3cm
\evensidemargin0cm
\oddsidemargin0cm
\textwidth16cm

\def\d{ {\cal D} }

\def\a{ {\cal A} }
\def\h{ {\cal H} }

\def\b{ {\cal B} }
\def\u{ {\cal U} }
\def\ii{ {\cal I} }

\def\s{ {\cal S} }
\def\e{ {\cal E} }
\def\p{ {\cal P} }

\def\k{ {\cal K} }
\def\f{ {\cal F} }

\def\m{ {\cal M} }
\def\pdk{\p_{\d+\k}}

\def\xx{ {\bf x} }
\def\yy{ {\bf y} }
\def\zz{ {\bf z} }
\def\cc{ {\bf c} }
\def\pp{ {\bf p} }
\def\ss{ {\bf s} }
\def\ww{ {\bf w} }
\def\ccc{ {\bf c_0} }
\def\dd{ {\bf d} }
\def\tt{ {\bf t} }
\def\ee{ {\bf e} }
\def\ff{ {\bf f} }

\def\noi{\noindent}

\def\g1{ \mathfrak{g}_1  }

\newtheorem{teo}{Theorem}[section]
\newtheorem{prop}[teo]{Proposition}
\newtheorem{lem}[teo]{Lemma}
\newtheorem{coro}[teo]{Corollary}

\theoremstyle{definition}
\newtheorem{rem}[teo]{Remark}

\newtheorem{quest}[teo]{Question}

\title{The C$^*$-algebra of compact perturbations of diagonal operators}
\author{E. Andruchow, E. Chiumiento, A. Varela}

\begin{document}

\maketitle 

\begin{abstract}
We study the elementary C$^*$-algebra $\d+\k$ which consists of sums of a 
diagonal plus a compact operator. We describe the structure of the unitary 
group, the sets of ideals, automorhisms and projections.
  \end{abstract}
\bigskip

{\bf 2010 MSC: 46L05, 47Axx}  

{\bf Keywords:}  Diagonal operators, compact operators, projections. 
\section{Introduction}
The motive of this paper is the  elementary C$^*$-algebra $\d+\k$ consisting of operators in a separable Hilbert space $\h$ (with a fixed or canonical orthonormal basis) which are sums of diagonal and compact operators. It is an algebra with plenty of ideals (even maximal ideals: as many as there are points in the residual set $\beta\mathbb{N}\setminus \mathbb{N}$, where  $\beta\mathbb{N}$  is  the Stone-Cech compactification of the natural numbers), projections and homomorphisms. Many of the properties discussed here can be studied using general or abstract techniques. However,  we prefer, when possible, to present  elementary or direct proofs. We focus on the set of projections and its geometry, on the structure of the unitary group, and on the automorphisms of $\d+\k$.

Section 2 contains preliminary facts on $\d+\k$ (e.g. $\d+\k$ is nuclear, quasi-diagonal, evidently contains the ideal $\k$ of compact operators, with quotient $\ell^\infty/\cc_0$, etc). In Section 3 we examine the general properties of the  groups of invertibles and unitaries. For instance a unitary $U$ element in $\d+\k$  can be factorized $U=e^{iX}D$, where $D$ is unitary and diagonal, and $X^*=X$ is compact. In Section 4 we consider ideals, characters and positive functionals. It is shown that all automorphims are approximately inner (i.e., implemented by unitaries in $\h$). Two questions remain un-answered:
\begin{itemize}
\item
Are all unitary elements of $\d+\k$ exponentials (with exponent in $\d+\k$)?
\item
Are the unitary operators of $\h$ which implement the automorphisms of $\d+\k$ a product of a unitary in $\d+\k$ times a {\it permutation} unitary operator?
\end{itemize}
In Section 5 we study the set of projections. Our main result (Theorem 4.7) describes the structure of this set. The main tools in this description are the concept of index of a pair of projections \cite{ass} and the resticted Grassmanian associated to a given decomposition of $\h$ \cite{sato}, \cite{segalwilson}. In Section 6 we compute the first homotopy group of the unitary group of $\d+\k$, and the $K$-groups of the algebra.

\section{Preliminaries}\label{preliminar}
Let $\h$ be a separable Hilbert space. We denote by $\d=\d(\h)$ the algebra of diagonal operators with respect to a fixed orthonormal basis $\{ e_n: n \ge 1\}$, and by $\k=\k(\h)$ the ideal of compact operators. Our object of study is the algebra
$$
\d+\k:=\{D+K: D\in\d \hbox{ and } K\in\k\}.
$$
If $\xx=\{x_n\}_{n\ge 1}$ is a sequence in $\ell^\infty$, we shall denote by $D_\xx$ the diagonal operator whose entries are $x_n$.
Clearly, if $T\in\d+\k$, $T$ decomposes as a sum of a diagonal plus a compact operator in many ways. For instance, if $\yy\in \ccc$ (=sequences converging to zero), then $T=D+K=D+D_\yy+K-D_\yy$, are two such decompositions. Apparently, all decompostions arise in this form from a given one. There is though one  distinguished decomposition. Denote by 
$$
\Delta: \b(\h)\to \d\subset \b(\h)
$$
the conditional expectation given by $\Delta(T)=D_\tt$, where $\tt=\{<Te_n,e_n>\}_{n\ge 1}$ (i.e., putting zeros in the off-diagonal entries of $T$). It is well known that $\Delta$ preserves the Schatten-von Neumann ideals, and is contractive for the $p$-norms. In particular, it preserves $\k$, i.e., $\Delta(\k)\subset \k$ and $\|\Delta(T)\|\le \|T\|$. The distinguished decomposition alluded above is
$$
T=D_T+K_T, \ \hbox{ with } \Delta(K_T)=0.
$$ 
The following result could be obtained as a particular case of a more general situation. However, it is an easy consequence of  the continuity of $\Delta$.
\begin{prop}
$\d+\k$ is closed in $\b(\h)$.
\end{prop}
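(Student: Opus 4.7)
The plan is to use the distinguished decomposition $T = D_T + K_T$ together with the continuity of $\Delta$ and the norm-closedness of $\k$.

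First I would take a norm-convergent sequence $T_n \to T$ with each $T_n \in \d+\k$, say $T_n = D_n + K_n$ with $D_n \in \d$ and $K_n \in \k$. The goal is to exhibit $T$ as the sum of a diagonal operator and a compact operator. The natural candidates are $D := \Delta(T)$ and $K := T - \Delta(T)$, where of course $D \in \d$ tautologically, so the whole content is showing $K \in \k$.

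Next I would apply $\Delta$ to each $T_n$. Since $\Delta$ is linear and fixes $\d$ while preserving $\k$, we get $\Delta(T_n) = D_n + \Delta(K_n)$, and hence the distinguished decomposition reads
\[
T_n = \Delta(T_n) + \bigl(K_n - \Delta(K_n)\bigr),
\]
where the second summand is compact (both $K_n$ and $\Delta(K_n)$ lie in $\k$). By contractivity of $\Delta$ on $\b(\h)$, the sequence $\Delta(T_n)$ converges in norm to $\Delta(T) = D$, so the sequence $K_n - \Delta(K_n) = T_n - \Delta(T_n)$ converges in norm to $T - D = K$.

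Finally, since $\k$ is norm-closed in $\b(\h)$ and each $K_n - \Delta(K_n)$ is compact, the limit $K$ is compact. Therefore $T = D + K \in \d + \k$, proving the closedness. There is no real obstacle here: the only point that needs to be observed is that taking the diagonal part of a compact operator stays inside $\k$, which is already noted in the excerpt as a standard property of $\Delta$.
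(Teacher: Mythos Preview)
Your proof is correct and follows essentially the same approach as the paper: both arguments apply $I-\Delta$ to the sequence $T_n$, observe that $(I-\Delta)(T_n)=(I-\Delta)(K_n)\in\k$, and use the continuity of $\Delta$ together with the closedness of $\k$ to conclude that $T-\Delta(T)$ is compact. The paper's version is just slightly more compressed.
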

\begin{proof}
Suppose that $T_n\in\d+\k$ and $T_n\to T$. Then $(I-\Delta)(T_n)\to (I-\Delta)(T)$. Note that if $T_n=D_n+K_n$, then 
$(I-\Delta)(T_n)=(I-\Delta)(K_n)\in\k$ ($I-\Delta$ also preserves $\k$). Then $T-\Delta(T)$ is compact. Then
$$
T=\Delta(T)+(T-\Delta(T))\in\d+\k.
$$
\end{proof}
Thus $\d+\k\subset \b(\h)$ is a C$^*$-algebra.
\begin{rem}\label{remark preliminar} Basic properties of $\d+\k$:

%\noindent

\begin{enumerate}
\item
$\d+\k$ is non separable: $\d+\k$ contains a copy of $\ell^\infty\simeq\d$. In particular, it is non AFD.
\item
$\d+\k$ is quasi-diagonal (see \cite{brown}).
\item
$\d+\k$ is nuclear. Indeed, $\k$ is nuclear and the quotient $(\d+\k)/\k$ is commutative, thus nuclear (see \cite{blackadar}, p. 369).
\item
$\d+\k$ is strong and weak operator dense in $\b(\h)$. 
\item
Any normal operator $A\in\b(\h)$ is unitarily equivalent to an element in $\d+\k$ (this is the classical Weyl-von Neumann-Berg theorem).
%Any selfadjoint operator $A\in\b(\h)$ is unitarily equivalent of an element in $\d+\k$ (this is the classical Weyl-von Neumann theorem).
\item
The inclusion $\d+\k\subset\b(\h)$ is irreducible.
\item
If $T\in\d+\k$ is selfadjoint, then $D_T$ and $K_T$ are selfadjoint. Indeed, $\Delta(T)^*=\Delta(T^*)=\Delta(T)$.
\end{enumerate}
\end{rem}

Let us denote by
$\pi$ both quotient homomorphisms $\b(\h)\to \b(\h) /\k$ and its restriction $\d+\k \to (\d+\k) / \k$. 
It is known that $\pi(\d)\subset\b(\h)/\k$ is a maximal abelian subalgebra (shortly, a {\it masa}), called the {\it standard masa} of the Calkin algebra \cite{jp}. Then, also the following characteristic property of $\d+\k$ can be obtained.
%
%\begin{prop}
%	Let $S\in\b(\h)$. Then $S\in\d+\k$ if and only if $[S,D]\in\k$ for every 
%diagonal operator $D$. Equivalently, if $[S,T]\in\k$ for all $T\in\d+\k$.
%\end{prop}
%\begin{proof}
%	The second assertion is clearly a trivial consequence of the first. Also it 
%is clear that if $S=D_0+K_0\in\d+\k$, then $[S,D]=[K_0,D]\in\k$. Conversely, 
%suppose that $[S,D]\in\k$ for all $D\in\d$. Then 
%	$$
%	0=\pi([S,D])=[\pi(S),\pi(D)]
%	$$
%	for all $D\in\d$. Since $\pi(\d)$ is maximal abelian, this implies that 
%$\pi(S)\in\pi(\d)$, i.e., $S\in\d+\k$. 
%\end{proof}
\begin{prop}
Let $S\in\b(\h)$. Then the following properties are equivalent:
\begin{enumerate}
	\item[a) ] $S\in\d+\k$, 
	\item[b) ]  $[S,D]\in\k$ for every diagonal operator $D$,
	\item[c) ]  $[S,T]\in\k$ for all $T\in\d+\k$,
	\item[d) ]  $[S,P ]\in \k$ for every orthogonal projection $P\in \d$.
\end{enumerate}   
\end{prop}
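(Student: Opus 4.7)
The plan is to prove the equivalences by establishing the cycle (a) $\Rightarrow$ (c) $\Rightarrow$ (b) $\Rightarrow$ (d) $\Rightarrow$ (a), with only the last implication requiring substantial work.

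First, (a) $\Rightarrow$ (c): given $S=D_S+K_S$ and $T=D_T+K_T$, I would expand
\[
[S,T]=[D_S,D_T]+[D_S,K_T]+[K_S,D_T]+[K_S,K_T].
\]
The first term vanishes because two diagonal operators commute, and the remaining three lie in $\k$ because $\k$ is a two-sided ideal of $\b(\h)$. Then (c) $\Rightarrow$ (b) is immediate because $\d\subset\d+\k$, and (b) $\Rightarrow$ (d) is immediate because any orthogonal projection in $\d$ is a diagonal operator.

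The key step is (d) $\Rightarrow$ (a). First I would upgrade (d) to (b): any $D\in\d$ corresponds to a bounded sequence in $\ell^\infty$, hence is a uniform limit of finite linear combinations of characteristic sequences, i.e.\ of diagonal projections $P_A=\sum_{n\in A}\langle\cdot,e_n\rangle e_n$. Writing $D=\lim_n D_n$ with $D_n=\sum_i\lambda_{i,n}P_{i,n}$, linearity gives $[S,D_n]\in\k$, and since $[S,\cdot\,]$ is norm-continuous and $\k$ is norm-closed, $[S,D]\in\k$. Passing to the Calkin algebra, condition (b) translates to: $\pi(S)$ commutes with every element of $\pi(\d)$. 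By the fact recalled just before the proposition, $\pi(\d)$ is a masa in $\b(\h)/\k$, so $\pi(S)\in\pi(\d)$. Therefore there exists $D\in\d$ with $\pi(S)=\pi(D)$, i.e.\ $S-D\in\k$, which yields $S=D+(S-D)\in\d+\k$.

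The only real obstacle is the step $\pi(S)\in\pi(\d)'\Rightarrow\pi(S)\in\pi(\d)$, which is precisely the masa property of $\pi(\d)$ and is invoked as a known fact from \cite{jp}; the approximation argument used to pass from projections to general diagonals is routine but should be spelled out, as it is the only non-trivial piece in the otherwise elementary chain of implications.
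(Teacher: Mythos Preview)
Your proposal is correct and follows essentially the same approach as the paper: both rely on approximating an arbitrary diagonal by finite linear combinations of diagonal projections to pass from (d) to (b), and then invoke the masa property of $\pi(\d)$ in the Calkin algebra to get (a). The only cosmetic difference is that you conclude $[S,D]\in\k$ directly from norm-closedness of $\k$, whereas the paper rephrases the same limit in the Calkin algebra with explicit $\varepsilon$-estimates; your version is slightly cleaner.
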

\begin{proof}
Consider first a) $\Rightarrow$ b). If $S=D_0+K_0\in\d+\k$, then 
$[S,D]=[K_0,D]\in\k$. Conversely, suppose that $[S,D]\in\k$ for all $D\in\d$. 
Then 
$$
0=\pi([S,D])=[\pi(S),\pi(D)]
$$
for all $D\in\d$. Since $\pi(\d)$ is maximal abelian, this implies that 
$\pi(S)\in\pi(\d)$, i.e., $S\in\d+\k$.	
	
The assertion b) $\Leftrightarrow$ c) is clearly a trivial consequence of 
a) $\Leftrightarrow$ c).  

b) $\Rightarrow$ d) is trivial. Conversely, suppose that $S\in B(H)$ satisfies 
$[S,P ]\in \k$ for every orthogonal projection $P\in \d$. We can approximate 
every diagonal $D\in\d$ with diagonals of finite spectrum as sequences of 
$\ell^\infty$ can be approximated by those that take finite values. Therefore, 
for 
 $\varepsilon>0$ there exists $D_\varepsilon\in\d$ of the form
$$
D_\varepsilon=\sum_{k =1}^{n} \alpha_{k } P_{k }, \text{ with } \alpha_{k }\in 
\mathbb{C} 
\text{ and } P_{k } \text{  orthogonal projections in } \d 
$$ 
such that $\|D-D_\varepsilon\|<\varepsilon$.
Observe that $S$ satisfies $[S,P_k]\in \k$ for $k=1,\dots, n$. Then it follows that
$$
\|[\pi(D),\pi(S)]\|=\|\pi(D) \pi(S) -\pi(S) \pi(D)\|= \|\pi(D) \pi(S) -\pi(S) 
\pi(D)+\pi(S)\pi(D_\varepsilon)-\pi(D_\varepsilon)\pi(S)\| 
$$
because $[\pi(S),\pi(D_\varepsilon)]\in\k$. Moreover,
\begin{equation*}
\begin{split}
\|\pi(D) \pi(S) -\pi(S)\pi(D) & + 
\pi(S)\pi(D_\varepsilon)-\pi(D_\varepsilon)\pi(S)\|=\\
& = 
\|(\pi(D)-\pi(D_\varepsilon)) \pi(S) + \pi(S)( 
\pi(D_\varepsilon)-\pi(D))\|\\
& \leq \|D-D_\varepsilon\|\, \|S\|+\|S\|\, \|D_\varepsilon-D\|
 < 2\ \|S\|\ \varepsilon
\end{split}
\end{equation*}
for every $\varepsilon>0$. 
This implies that $\|[\pi(D),\pi(S)]\|=0$, which completes the proof.
\end{proof}
Let us finish this preliminary section by showing that elements of $\d+\k$ with finite spectrum are norm dense in $\d+\k$.

\begin{prop}
The set of elements with finite spectrum is norm dense in $\d+\k$. Also the set of selfadjoint elements with finite spectrum is norm  dense in the set of selfadjoint elements of $\d+\k$, i.e.,  $\d+\k$ has real rank zero.
\end{prop}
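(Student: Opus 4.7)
The plan is to approximate a general $T\in\d+\k$ by elements of finite spectrum by exploiting the distinguished decomposition $T=D_T+K_T$ (with $\Delta(K_T)=0$) together with finite-rank truncation on the canonical basis $\{e_n\}$.

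First I would approximate $D_T\in\d$ by a diagonal $D'$ with finite spectrum. Since $D_T=D_\xx$ for some $\xx\in\ell^\infty$, given $\varepsilon>0$ I can partition a disc containing $\{x_n\}$ into finitely many Borel sets of diameter $<\varepsilon$ and replace each $x_n$ by the (finitely many) marked values of its set. This yields $D'\in\d$ with $\|D_T-D'\|<\varepsilon$ and $\sigma(D')$ finite. In the selfadjoint situation (using Remark \ref{remark preliminar}(7), so that $D_T$ is selfadjoint), the partition can be chosen to consist of real intervals, producing $D'$ selfadjoint.

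Next I would approximate the compact part by a finite-rank truncation. Let $P_N$ denote the projection onto $\mathrm{span}\{e_1,\dots,e_N\}$. Since $K_T$ is compact and $P_N\to I$ strongly, a standard estimate gives $\|P_NK_TP_N-K_T\|\to 0$, so for $N$ large enough $K_N:=P_NK_TP_N$ satisfies $\|K_N-K_T\|<\varepsilon$. Observe that $K_N$ is finite rank and, if $K_T=K_T^*$, then $K_N=K_N^*$.

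The key observation is then to set $T':=D'+K_N\in\d+\k$ and note that $P_N$ (being the projection onto a set of basis vectors) commutes with every diagonal operator, in particular with $D'$. Hence the decomposition $\h=P_N\h\oplus P_N^\perp\h$ reduces $T'$: on $P_N^\perp\h$ one has $T'|_{P_N^\perp\h}=D'|_{P_N^\perp\h}$, whose spectrum is contained in the finite set $\sigma(D')$; on the finite-dimensional subspace $P_N\h$, $T'|_{P_N\h}$ is a matrix and therefore has finite spectrum. Consequently
\[
\sigma(T')\subset\sigma(D')\cup\sigma(T'|_{P_N\h}),
\]
a finite set. Since $\|T-T'\|\le\|D_T-D'\|+\|K_T-K_N\|<2\varepsilon$, this proves density. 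The selfadjoint version follows verbatim, because both $D'$ and $K_N$ are selfadjoint by construction; this establishes real rank zero.

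The only subtle point, which I expect to be the main (and really the sole) obstacle, is that the sum of two operators with finite spectrum need not have finite spectrum: the argument works precisely because the truncation $P_N$ is compatible with the diagonal structure, so that $D'$ and $K_N$ act on mutually invariant orthogonal subspaces. Once this compatibility is noted, everything else is a routine estimate.
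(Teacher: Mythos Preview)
Your proof is correct and follows essentially the same approach as the paper: truncate the compact part to $P_NK_TP_N$, approximate the diagonal by one with finite spectrum, and use that $P_N$ commutes with diagonals to obtain a block-diagonal decomposition into a finite-dimensional piece and a finite-spectrum diagonal piece. The only cosmetic difference is that the paper approximates just the tail $P_N^\perp D$ (keeping $P_ND$ intact inside the finite-dimensional block), whereas you approximate all of $D_T$; both work equally well.
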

\begin{proof}
Fix $\epsilon >0$. Denote by $E_k$ the rank one orthogonal projection onto the line generated by the vector $e_k$ of the fixed basis, and by $F_n=\sum_{k=1}^nE_k$. Given $D+K\in\d+\k$, let $K_n=F_nKF_n$ and $D_n=F_nD=DF_n$. Clearly $K_n\to K$, and thus $\|K-K_n\|<\epsilon/2$ for $n$  sufficiently large. Denote by $D'_n=F_n^\perp D$, and note that $D'_n$ is a diagonal operator acting on $R(F_n)^\perp$ (in terms of the basis $\{e_k: k\ge n+1\}$). Sequences with finite many values are dense in $\ell^\infty$, thus there exists a diagonal operator $D_0$, acting in  $R(F_n)^\perp$, such that $\|D'_n-D_0\|<\epsilon/2$. Consider the element $D_n+D_0+K_n$. Clearly, it belongs to $\d+\k$. Note that in terms of the decomposition $R(F_n)\oplus R(F_n)^\perp$, it can be written
$$
(D_n+K_n) \oplus D_0,
$$
and thus $\sigma(D_n+D_0+K_n)=\sigma(D_n+K_n)\cup\sigma(D_0)$, and clearly both sets are finite (the left hand side set is the spectrum of an operator in the finite dimensional space $R(F_n)$). Finally, noting that $D-D_n=D'_n$,
$$
\|D+K-(D_n+D_0+K_n)\|\le \|K-K_n\|+ \|D'_n-D_0\|<\epsilon.
$$
If we start with a selfadjoint element $D+K$ (with $D^*=D$ and $K^*=K$), it is clear that the finite spectrum approximant is also selfadjoint.
\end{proof}

\section{The linear group of $\d + \k$}

Let us denote by $G_{\d+\k}$ the {\it linear group} (or group of invertible elements) of $\d+\k$.

\begin{lem}\label{lemita}
If $T\in\d+\k$ is invertible, then there exists a decomposition $T=D_0+K_0$ with $D_0$ invertible.
\end{lem}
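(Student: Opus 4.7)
The plan is to use the canonical decomposition $T = D_T + K_T$ (with $\Delta(K_T)=0$) and show that $D_T$ already fails to be invertible only on a finite set of indices, so a finite-rank correction produces the desired $D_0$.

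First I would pass to the Calkin algebra. Since $T$ is invertible in $\b(\h)$ and $K_T\in\k$, the image $\pi(T) = \pi(D_T)$ is invertible in $\b(\h)/\k$. Because $D_T$ lies in $\d$ and $\pi(\d)$ is a masa in the Calkin algebra, $\pi(D_T)$ is invertible in $\pi(\d)$, which under the identification $\pi(\d)\simeq \ell^\infty/c_0$ means that the diagonal sequence $\xx = \{x_n\}$ of $D_T$ is invertible modulo $c_0$.

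Next I would translate this concretely: invertibility in $\ell^\infty/c_0$ amounts to the existence of some $\epsilon>0$ such that the set $F_\epsilon = \{n : |x_n| < \epsilon\}$ is finite. Let me then define a new diagonal operator $D_0$ by setting its $n$-th diagonal entry equal to $x_n$ for $n\notin F_\epsilon$ and equal to, say, $1$ (or any number of modulus $\ge \epsilon$) for $n\in F_\epsilon$. By construction every diagonal entry of $D_0$ has modulus $\ge \epsilon$, so $D_0$ is invertible in $\d$. Moreover $D_T - D_0$ is a diagonal operator supported on the finite set $F_\epsilon$, hence of finite rank and in particular compact. Therefore
$$
K_0 := T - D_0 = K_T + (D_T - D_0) \in \k,
$$
and we obtain the desired decomposition $T = D_0 + K_0$.

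I do not foresee a serious obstacle; the only point that deserves care is the reduction via the Calkin algebra and the use, already established in the preliminaries, that $\pi(\d)$ is a masa, so that spectra of diagonal operators modulo the compacts are controlled by the asymptotic behavior of the diagonal sequence. Everything else is a routine finite-rank modification.
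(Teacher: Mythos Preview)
Your argument is correct and shares the same overall architecture as the paper's proof: show that only finitely many diagonal entries can be small, then perform a finite-rank correction to obtain an invertible $D_0$. The difference lies in how the first step is justified. The paper argues directly by contradiction: if a subsequence $d_{n_k}\to 0$ existed, one builds the diagonal projection $D_\zz$ supported on $\{n_k\}$, observes that $D_\zz T = D_\zz D + D_\zz K$ is compact, and concludes that $D_\zz = (D_\zz T)T^{-1}$ is a compact projection of infinite rank, a contradiction. You instead pass to the Calkin algebra and note that $\pi(D_T)=\pi(T)$ is invertible there, hence invertible in $\pi(\d)\simeq \ell^\infty/\ccc$, which immediately yields $\liminf |x_n|>0$. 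Both routes are short; the paper's is self-contained and in keeping with its stated preference for elementary arguments, while yours exposes the structural reason behind the phenomenon. One small remark: you invoke the masa property of $\pi(\d)$, but all that is actually needed for the inverse to lie in $\pi(\d)$ is that $\pi(\d)$ is a unital C$^*$-subalgebra, since unital C$^*$-subalgebras are automatically inverse-closed.
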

\begin{proof}
Let $T=D+K$, and denote by $\dd=\{d_n\}_{n\ge 1}$ the sequence of the entries of $D$. We claim that $\dd$ cannot have a subsequence which converges to $0$. Suppose otherwise that there exists a subsequence $d_{n_k}\to 0$. Consider the sequence $\zz=\{z_n\}_{n\ge 1}$ given by $z_n=1$ if $n=n_k$, and $0$ if not. Then $D_\zz$ is a projection such that $D_\zz D$ is compact. Then $D_\zz T=D_\zz D+D_\zz K$ is compact. Since $T$ is invertible, this implies that 
$D_\zz=(D_\zz T)T^{-1}$ is a compact projection. Then it must have finite rank, which leads to a contradiction. It follows that there exists $r>0$ such that $|d_n|\ge r$ for all $n\ge 1$, save for a finite set $\{n_1, \dots, n_N\}$. Let $P$ be the diagonal operator with $1$ in the place $n_j$, $1\le j\le N$, and zero elsewhere. Clearly $P$ is compact, and $D_0=D+r P\in\d$ is invertible, for a suitable choice of $r$. Then $T=D_0+(K-r P)$, with $K-r P\in\k$ . 
\end{proof}

\begin{prop}
Let $T\in\d+\k$. The following are equivalent:
\begin{enumerate}

\item
$T$ is invertible.
\item
$N(T)=\{0\}$ and there exists a decomposition $T=D+K$ with $D$ invertible.
\item
$R(T)=\h$ and there exists a decomposition $T=D+K$ with $D$ invertible.
\end{enumerate}
\end{prop}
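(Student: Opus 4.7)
The plan is to prove (1)$\Rightarrow$(2), (1)$\Rightarrow$(3) using Lemma~\ref{lemita}, and to get (2)$\Rightarrow$(1) and (3)$\Rightarrow$(1) by factoring through an operator of the form identity plus compact and invoking Fredholm theory.

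For (1)$\Rightarrow$(2) and (1)$\Rightarrow$(3): if $T$ is invertible in $\d+\k$ (equivalently in $\b(\h)$, since $\d+\k\subset\b(\h)$ is a unital C$^*$-subalgebra), then trivially $N(T)=\{0\}$ and $R(T)=\h$, and the decomposition $T=D_0+K_0$ with $D_0\in\d$ invertible is exactly the content of Lemma~\ref{lemita}.

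For (2)$\Rightarrow$(1), I would write $T=D+K$ with $D\in\d$ invertible and factor
\[
T=D\bigl(I+D^{-1}K\bigr).
\]
Since $D^{-1}\in\d\subset\b(\h)$ is bounded, the operator $D^{-1}K$ is compact, so $I+D^{-1}K$ is a compact perturbation of the identity, hence Fredholm of index $0$. Because $D$ is invertible, $N(T)=N(I+D^{-1}K)$; the hypothesis $N(T)=\{0\}$ then forces $I+D^{-1}K$ to be injective, and being Fredholm of index $0$, it is invertible in $\b(\h)$. By the classical Fredholm alternative, the inverse has the form $I+S$ with $S$ compact, so
\[
T^{-1}=(I+S)D^{-1}=D^{-1}+SD^{-1}\in\d+\k,
\]
which shows $T$ is invertible in $\d+\k$ as well.

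The argument (3)$\Rightarrow$(1) is entirely parallel: the same factorization reduces the condition $R(T)=\h$ to surjectivity of $I+D^{-1}K$, and a surjective Fredholm operator of index $0$ is invertible; the inverse again lies in $\d+\k$ by the same ``$I+$ compact'' identity. The main (very mild) obstacle is simply remembering to justify that the inverse of $T$ lives in $\d+\k$ and not merely in $\b(\h)$, which is why the ``$(I+K)^{-1}=I+S$ with $S$ compact'' step must be made explicit rather than appealing only to boundedness of $T^{-1}$.
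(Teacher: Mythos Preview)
Your argument is correct and follows the same route as the paper: Lemma~\ref{lemita} for (1)$\Rightarrow$(2),(3), and the factorization $T=D(I+D^{-1}K)$ together with the Fredholm alternative for the converse directions. One small redundancy: the final step where you explicitly verify $T^{-1}\in\d+\k$ is unnecessary, since you yourself observed at the outset that $\d+\k\subset\b(\h)$ is a unital C$^*$-subalgebra, so spectral permanence already guarantees that invertibility in $\b(\h)$ is equivalent to invertibility in $\d+\k$; the paper simply omits this.
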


\begin{proof}
By the previous lemma, 1) implies 2) and 3). Conversely, if  $T=D+K$ with $D\in\d$ invertible,  then $T=D(I+D^{-1}K)$.  Thus, if $I+D^{-1}K$ has either trivial nullspace or full range,  by Fredholm's alternative, $I+D^{-1}K$ is invertible. 
\end{proof}
Denote by $\b_s(\h)$ the set of self-adjoint operators in $\b(\h)$. 
\begin{prop}
$G_{\d+\k}$ is dense in $\d+\k$ (i.e., $\d+\k$ has stable rank one {\rm \cite{gustavo pucho}}). The same holds for the selfadjoint parts: $G_{\d+\k}\cap \b_s(\h)$ is dense in $(\d+\k)\cap\b_s(\h)$. 
\end{prop}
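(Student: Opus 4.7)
The plan is to approximate a general $T=D+K\in\d+\k$ by first perturbing the diagonal part to be invertible, and then shifting by a small scalar to avoid the remaining discrete spectrum near~$0$.

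Fix $\varepsilon>0$ and write $\dd=\{d_n\}_{n\ge1}$ for the sequence of entries of $D$. Define a new sequence $\dd^{(\varepsilon)}$ by
$$
d_n^{(\varepsilon)}=\begin{cases} d_n & \text{if } |d_n|\ge \varepsilon,\\ \varepsilon & \text{if } |d_n|<\varepsilon, \end{cases}
$$
and set $D_\varepsilon=D_{\dd^{(\varepsilon)}}$. Then $\|D-D_\varepsilon\|\le \varepsilon$ and $\inf_n |d_n^{(\varepsilon)}|\ge \varepsilon>0$, so $D_\varepsilon\in\d$ is invertible. Put $T_\varepsilon=D_\varepsilon+K\in\d+\k$. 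Then $\|T-T_\varepsilon\|\le \varepsilon$, and by Lemma~\ref{lemita} (or directly, since $D_\varepsilon$ is invertible and $K$ is compact) $T_\varepsilon$ is Fredholm, so $0\notin \sigma_e(T_\varepsilon)=\sigma_e(D_\varepsilon)$.

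Because $0$ is not in the essential spectrum of $T_\varepsilon$, the spectrum $\sigma(T_\varepsilon)$ is discrete in a neighbourhood of the origin (isolated eigenvalues of finite multiplicity accumulating at most on $\sigma_e(T_\varepsilon)$). Thus we may choose $\delta\in\mathbb{C}$ with $0<|\delta|<\varepsilon$ and $\delta\notin \sigma(T_\varepsilon)$. Then
$$
T_\varepsilon-\delta I=(D_\varepsilon-\delta I)+K\in\d+\k
$$
is invertible, and $\|T-(T_\varepsilon-\delta I)\|\le \varepsilon+|\delta|<2\varepsilon$, proving density of $G_{\d+\k}$ in $\d+\k$.

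For the selfadjoint version, if $T=T^*\in\d+\k$, by Remark~\ref{remark preliminar}(7) one may write $T=D+K$ with $D^*=D$ (so the entries $d_n$ are real) and $K^*=K$. The construction above yields $D_\varepsilon$ with real entries, so $D_\varepsilon\in\d\cap\b_s(\h)$, and $T_\varepsilon=D_\varepsilon+K$ is selfadjoint. Since now $\sigma(T_\varepsilon)\subset\mathbb{R}$ and $0\notin\sigma_e(T_\varepsilon)$, the set $\sigma(T_\varepsilon)\cap(-\varepsilon,\varepsilon)$ is finite, so a real $\delta$ with $0<|\delta|<\varepsilon$ and $\delta\notin\sigma(T_\varepsilon)$ can be chosen. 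Then $T_\varepsilon-\delta I$ is a selfadjoint invertible element of $\d+\k$ within $2\varepsilon$ of $T$.

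The main obstacle is conceptual rather than technical: one has to notice that after the diagonal correction the operator $T_\varepsilon$ is automatically Fredholm, which forces the spectrum near the origin to be discrete and opens room for the scalar shift. Once this is in place the estimates are immediate, and the fact that the shift $-\delta I$ belongs to $\d$ (even to $\CC I\subset\d$) ensures that the approximating invertible element stays in $\d+\k$ in both the general and selfadjoint settings.
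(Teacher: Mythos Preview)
Your selfadjoint argument is correct, but the general case has a real gap. The step ``since $0\notin\sigma_e(T_\varepsilon)$, the spectrum $\sigma(T_\varepsilon)$ is discrete near the origin'' is the Weyl description of the essential spectrum, valid for selfadjoint (or normal) operators but \emph{false} in general: being Fredholm of index $0$ throughout a disk does not force the spectrum there to be discrete. Take $A=S\oplus S^*$ with $S$ the unilateral shift: $\sigma_e(A)=\mathbb{T}$, $\mathrm{ind}(A-\lambda)=0$ for all $|\lambda|<1$, yet $\sigma(A)=\overline{\mathbb{D}}$, so no small scalar shift lands in the resolvent. This obstruction actually occurs inside $\d+\k$. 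Relabelling the basis, $A$ differs from the (unitary) bilateral shift $B$ by a rank-one operator, and by Weyl--von~Neumann--Berg $B$ is unitarily equivalent to a diagonal unitary plus a compact; transporting everything back, one obtains $T=D+K\in\d+\k$ with $D$ a diagonal unitary and $\sigma(T)=\overline{\mathbb{D}}$. For this $T$ your construction gives $D_\varepsilon=D$ and $T_\varepsilon=T$ for every $\varepsilon<1$, and there is no $\delta$ with $0<|\delta|<\varepsilon$ outside $\sigma(T_\varepsilon)$.

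The paper sidesteps this by first replacing $K$ with a finite-rank approximant $M$: then $D_\varepsilon^{-1}M$ has \emph{finite} spectrum, so a small further adjustment of $M$ keeps $-1$ out of that spectrum and $D_\varepsilon+M=D_\varepsilon(I+D_\varepsilon^{-1}M)$ is invertible. The finite-rank reduction is exactly what turns the spectral obstruction into a finite, hence avoidable, one. (Minor point: your estimate $\|D-D_\varepsilon\|\le\varepsilon$ should be $<2\varepsilon$, since for $d_n$ close to $-\varepsilon$ one has $|d_n-\varepsilon|$ close to $2\varepsilon$; this does not affect the logic.)
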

\begin{proof}
Let $T=D+K\in\d+\k$, with $D$ given by the sequence $\{d_n\}_{n\ge 1}$, and fix $\epsilon>0$. Let $D_\epsilon\in\d$ given by the sequence $\{d^\epsilon_n\}_{n\ge 1}$,
$$
d^\epsilon_n=\left\{ \begin{array}{l} d_n \hbox{ if }|d_n|\ge \epsilon \\ \epsilon \hbox{ if } |d_n|<\epsilon \end{array} \right. .
$$
Clearly, $D_\epsilon$ is invertible and $\|D-D_\epsilon\|\le 2\epsilon$. Let $M\in\k$ be a finite rank operator such that $\|K-M\|<\epsilon$. Then also $D_\epsilon^{-1}M$ has finite rank, and thus finite spectrum. We can further adjust $M$ in order that $-1$ does not belong to the spectrum of $D_\epsilon^{-1}M$ (for instance, adding a small multiple of a projection onto the finite rank subspace on which $D_\epsilon^{-1}M$ acts), and still have that $\|K-M\|<\epsilon$. Then  $D_\epsilon+M=D_\epsilon(I+D_\epsilon^{-1}M)$ is invertible, and
$$
\|D+K-(D_\epsilon+M)\|\le \|D-D_\epsilon\|+\|K-M\|<3\epsilon.
$$
If $T, D$ and $K$ are selfadjoint, then the operator $D_\epsilon$ is clearly selfadjoint. Also the finite rank operator $M$ approximating $K$ can be chosen selfadjoint.
\end{proof}

Next we consider the relation between the commutative $C^*$-algebra $(\d + \k) / \k$ and Fredholm operators.

\begin{prop}
Let $T \in \d + \k$, then the following assertions are equivalent:
\begin{enumerate}
\item[1.] $\pi(T)$ is invertible in $(\d + \k )/ \k$. 
\item[2.] $T$ is Fredholm.
\item[3.] $T$ is semi-Fredholm.
\end{enumerate}
In any of these cases, $ind(T)=0$.
\end{prop}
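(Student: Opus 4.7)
The plan is to obtain (1) $\Leftrightarrow$ (2) from Atkinson's theorem together with spectral permanence, while (2) $\Rightarrow$ (3) is free and (3) $\Rightarrow$ (2) will be extracted from the fact (recalled in the preamble above Proposition 2.2) that $\pi(\d)$ is a masa in $\b(\h)/\k$.

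For the equivalence (1) $\Leftrightarrow$ (2): by Atkinson, $T$ is Fredholm iff $\pi(T)$ is invertible in $\b(\h)/\k$. Since $(\d+\k)/\k$ is a unital C$^*$-subalgebra of the Calkin algebra $\b(\h)/\k$, spectral permanence for unital C$^*$-subalgebras gives that $\pi(T)$ is invertible in $(\d+\k)/\k$ iff it is invertible in $\b(\h)/\k$, which closes the equivalence.

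For (3) $\Rightarrow$ (2): write $T=D+K$ with $D\in\d$ and $K\in\k$, so that $\pi(T)=\pi(D)\in\pi(\d)$. Since $\pi(\d)$ is an abelian C$^*$-subalgebra of the Calkin algebra, $\pi(T)$ is a normal element of $\b(\h)/\k$. If $T$ is semi-Fredholm, $\pi(T)$ is one-sidedly invertible in $\b(\h)/\k$; but a one-sidedly invertible normal element $a$ of any unital C$^*$-algebra is in fact invertible, because $a^*a=aa^*$, and either one-sided inverse forces this positive element to be invertible. Thus $\pi(T)$ is invertible, so $T$ is Fredholm.

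For the index assertion, the previous paragraph also shows that $\pi(D)$ is invertible in the Calkin algebra, so by Atkinson $D$ itself is Fredholm. Since $K$ is compact and the Fredholm index is invariant under compact perturbations, $\mathrm{ind}(T)=\mathrm{ind}(D)$. Writing $D=D_\dd$ with $\dd=\{d_n\}$, the Fredholm condition forces $|d_n|\ge \epsilon$ for all but finitely many $n$; the kernel and cokernel of $D$ are then both equal to the span of the (finitely many) basis vectors $e_n$ with $d_n=0$, so $\mathrm{ind}(D)=0$. The only mildly delicate ingredient is the normal-plus-one-sided-invertible step; the rest is bookkeeping with the structural facts already established.
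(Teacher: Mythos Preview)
Your proof is correct and takes a more abstract route than the paper's. For (1) $\Leftrightarrow$ (2), the paper constructs an explicit inverse in $(\d+\k)/\k$: writing $T=D+K$, it observes that $D$ is Fredholm and builds the diagonal pseudo-inverse $D_{\bf a}$ (inverting the nonzero diagonal entries and putting $0$ elsewhere), so that $TD_{\bf a}-I$ and $D_{\bf a}T-I$ are compact; you instead invoke spectral permanence for unital C$^*$-subalgebras, which is quicker but less hands-on, and in keeping with the paper's stated preference for direct arguments the explicit construction has its own merit. For (3) $\Rightarrow$ (2), the paper reduces to the diagonal part via compact-perturbation stability of the semi-Fredholm class and then uses the elementary identity $N(D)=R(D)^\perp$ (so that finiteness of one forces finiteness of the other); you instead exploit that $\pi(T)=\pi(D)$ is normal in the Calkin algebra and that a one-sidedly invertible normal element is automatically invertible. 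Your route avoids checking that semi-Fredholmness descends to $D$, and the normality argument is a clean structural observation---note, incidentally, that you do not actually need the masa property you mention in the preamble, only that $D$ (hence $\pi(D)$) is normal. The index computation is essentially identical in both proofs.
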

\begin{proof}
To prove that $2)$ implies $1)$, we first note that $T=D+K \in \d +\k$, is Fredholm if and only if $D$ is. Next we observe that for a diagonal operator we have $N(D)=R(D)^\perp$. Then $k:=\dim N(D)=\dim R(D)^\perp$ gives the   zero entries in the sequence $\{ d_n\}_{n \geq 1}$ which defines $D$. Suppose that we list all these zero entries $d_{i_1}, \ldots , d_{i_k}$. The diagonal operator $D_{\textbf{a}}$, where $\textbf{a}=\{ a_n\}_{n \geq 1}$ is defined as $a_n=d_n^{-1}$ if $n \neq i_1 , \ldots i_k$, and $a_n=0$ otherwise, clearly satisfies that $DD_{\textbf{a}} -I=D_{\textbf{a}}D-I$ is a finite rank  operator. Therefore $TD_{\textbf{a}} -I$ and $D_{\textbf{a}}T-I$ are compact operators, which means that $\pi(T)$ is invertible in $(\d+ \k)/\k$.  The reverse implication is trivial. 

The equivalence between $2)$ and $3)$ follows again by noting that $T=D+K \in \d +\k$, is Fredholm if and only if $D$ is, and also that  $N(D)=R(D)^\perp$. In addition, this also implies that $ind(T)=0$.
\end{proof}

%By similar arguments, one can see that (semi-) Fredholm operators in $\d+\k$ have index $0$.
%\begin{prop}
%Let $T\in\d+\k$ be a semi-Fredholm operator. Then $ind(T)=0$.
%\end{prop}
%\begin{proof}
%Since $ind(T^*)=-ind(T)$, it suffices to consider the case when $\dim N(T)<\infty$. Then there exists $B\in\b(\h)$ such that $\pi(BT)=\pi(B)\pi(D)=1$ in the Calkin algebra. We claim that, as in Lemma \ref{lemita}, in the diagonal sequence $\{d_n\}$ of $D$ there is no subsequence which converges to zero. Otherwise, construct $D_\zz$ as in Lemma \ref{lemita}. Then $BTD_\zz$ is compact, which leads to a contradiction: $0=\pi(BTD_\zz)=\pi(BT)\pi(D_\zz)=\pi(D_\zz)$ and $D_\zz$ is clearly non compact. Then we can write (again as in the referred lemma), $T=D_0+K_0$ with $D_0$ invertible, which implies that $T$ is a Fredholm operator with index $0$.
%\end{proof}

The following elementary consequence follows: 

\begin{coro}
There are no proper isometries in $\d+\k$: if $V\in\d+\k$ is isometric, then $V$ is a unitary operator. Similarly, there are no proper co-isometries in this algebra.
\end{coro}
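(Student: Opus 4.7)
The plan is to deduce the corollary directly from the preceding proposition, which asserts that every semi-Fredholm element of $\d+\k$ is actually Fredholm of index zero.

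First I would observe that any isometry $V$ is automatically semi-Fredholm as an operator on $\h$: since $\|Vx\|=\|x\|$ for every $x$, $V$ is bounded below, so $N(V)=\{0\}$ (finite-dimensional) and $R(V)$ is closed. If moreover $V\in\d+\k$, then the previous proposition applies and yields that $V$ is Fredholm with $\operatorname{ind}(V)=0$. Combining $\dim N(V)=0$ with $\operatorname{ind}(V)=\dim N(V)-\dim N(V^*)=0$ gives $\dim N(V^*)=0$, hence $R(V)^\perp=N(V^*)=\{0\}$. Since $R(V)$ is closed, this forces $R(V)=\h$. A surjective isometry is unitary, so $V$ is unitary.

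For the co-isometry statement I would use that $\d+\k$ is a $C^*$-algebra, so it is stable under taking adjoints. If $W\in\d+\k$ is a co-isometry, i.e.\ $WW^*=I$, then $V:=W^*\in\d+\k$ satisfies $V^*V=I$, so by the previous paragraph $V$ is unitary, and hence so is $W=V^*$.

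I do not anticipate any real obstacle here; the only thing to verify carefully is that an isometry genuinely satisfies the semi-Fredholm hypotheses needed to invoke the proposition (closed range plus a finite-dimensional kernel), which is immediate from $V^*V=I$.
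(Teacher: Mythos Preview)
Your argument is correct and follows essentially the same route as the paper: observe that an isometry is semi-Fredholm, invoke the preceding proposition to get index zero, and conclude surjectivity from $N(V)=\{0\}$. Your treatment is slightly more detailed (you spell out closedness of $R(V)$ and the co-isometry case via adjoints), but the substance is identical.
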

\begin{proof}
An isometry $V$  is a semi-Fredholm operator with $-\infty\le ind(V)\le 0$. Thus $ind(V)=0$. Since $N(V)=\{0\}$, it follows that $R(V)^\perp=\{0\}$, i.e. $R(V)=\overline{R(V)}=\h$.
\end{proof} 

Let us describe unitary elements in $\d+\k$.
\begin{prop}\label{factores unitarios}
If $U\in\d+\k$ is unitary, then there exists a decomposition $U=D+K$ with $D$ unitary  and $K$ compact. Moreover, $U$ can be factorized as
$$
U=De^{iX}
$$
with $X=X^*\in\k$, $\|X\|\le \pi$.
\end{prop}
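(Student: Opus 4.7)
The plan is to first produce the unitary diagonal part, and then reduce the factorization claim to the standard fact that a unitary which is a compact perturbation of the identity has a compact self-adjoint logarithm.

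\textbf{Step 1 (a decomposition with unitary diagonal).} Start with \emph{any} decomposition $U = D_0 + K_0$, $D_0 = D_{\mathbf{d}} \in \d$, $K_0 \in \k$. Since $U$ is unitary, $U^*U - I = 0$, and expanding gives
\[
D_0^*D_0 - I \;=\; -D_0^*K_0 - K_0^*D_0 - K_0^*K_0 \;\in\; \k .
\]
Applying the diagonal expectation $\Delta$ (which preserves $\k$ by the preliminaries) and using $\Delta(D_0^*D_0)=D_0^*D_0$ yields $D_0^*D_0 - I \in \d\cap\k$, i.e.\ $|d_n|^2 \to 1$. In particular only finitely many $d_n$ satisfy $|d_n|\le 1/2$. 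Define $d_n' = d_n/|d_n|$ when $|d_n|\ge 1/2$ and $d_n' = 1$ otherwise. Then $D := D_{\mathbf d'}$ is a unitary diagonal operator, and $D-D_0$ is diagonal with entries $d_n'-d_n \to 0$, hence compact. Therefore
\[
U = D + K, \qquad K := U - D = K_0 + (D-D_0) \in \k,
\]
which is the first assertion.

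\textbf{Step 2 (reduction to a compact perturbation of $I$).} Put $W := D^{*}U$. Then $W$ is unitary (product of unitaries) and
\[
W - I = D^{*}U - D^{*}D = D^{*}(U-D) = D^{*}K \in \k .
\]
A factorization $U = De^{iX}$ with $X=X^*\in\k$, $\|X\|\le\pi$, is therefore equivalent to the existence of such an $X$ with $e^{iX}=W$, so from now on the task is to produce a compact self-adjoint logarithm of a unitary $W$ with $W-I$ compact.

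\textbf{Step 3 (the compact logarithm of $W$).} Since $\pi(W)=\pi(I)$ in the Calkin algebra, the essential spectrum of $W$ is $\{1\}$; every point $\lambda\in\sigma(W)\setminus\{1\}$ is an isolated eigenvalue of finite multiplicity, and the set of such eigenvalues can only accumulate at $1$. Let $f\colon\mathbb{T}\to\mathbb{R}$ be the principal branch of the argument, $f(e^{i\theta})=\theta$ for $\theta\in(-\pi,\pi]$, so that $|f|\le\pi$, $f(1)=0$ and $e^{if(z)}=z$. By Borel functional calculus, $X := f(W)$ is self-adjoint with $\|X\|\le\pi$ and $e^{iX}=W$. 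To see compactness, use the spectral decomposition
\[
W \;=\; P_1 \;+\; \sum_{\lambda_n\in\sigma(W)\setminus\{1\}} \lambda_n P_n,
\]
where each $P_n$ is a finite-rank spectral projection and $\lambda_n \to 1$. Since $f(1)=0$,
\[
X \;=\; \sum_{\lambda_n\neq 1} f(\lambda_n) P_n,
\]
and the partial sums are finite-rank operators converging in norm to $X$ because $f(\lambda_n)\to f(1)=0$. Thus $X\in\k$, and $U = DW = De^{iX}$, completing the proof.

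\textbf{Main obstacle.} The only delicate point is Step 3: one must be careful that the chosen branch of $\arg$ is continuous at the essential spectral value $1$ (so that $f(\lambda_n)\to 0$) while giving the required bound $\|X\|\le\pi$; this is precisely why the branch $(-\pi,\pi]$ is chosen. The first two steps are essentially bookkeeping, relying on the $C^*$-identity $U^*U=I$ combined with the continuity of $\Delta$.
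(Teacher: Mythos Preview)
Your proof is correct and follows essentially the same route as the paper: first modify the diagonal part so that its entries have modulus one (using that $D_0^*D_0-I$ is compact, hence $|d_n|\to 1$), then write $U=D\,W$ with $W-I\in\k$, and finally build the compact self-adjoint logarithm of $W$ from its spectral decomposition, using that the eigenvalues $\ne 1$ accumulate only at $1$. The only cosmetic differences are that you invoke $\Delta$ in Step~1 (unnecessary, since $D_0^*D_0-I$ is already diagonal) and phrase Step~3 via Borel functional calculus rather than the explicit spectral sum of $D^*K$ used in the paper; the underlying argument is the same.
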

\begin{proof}
Let  $U=D'+K'$ be an arbitrary decomposition of $U$. Since $U$ is unitary, $I=U^*U=D'^*D'+D'^*K'+K'^*D'+K'^*K'$ and thus $D'^*D'=I+K''$ with $K''\in\k$. It follows that the spectrum of $D'^*D'$ is countable, and accumulates only eventually at $1$, i.e., if $d_n'$ are the entries of $D'$, then $|d'_n|\to 1$. Let $d'_n=|d'_n|e^{i\theta_n}$, with $-\pi\le \theta_n<\pi$. Put $D=D_\dd$, where $\dd=\{d_n\}_{n\ge 1}$, and $d_n=e^{i\theta_n}$. Then $D'-D$, which is the diagonal operator given by the sequence $(|d'_n|-1)e^{i\theta_n}$ (which converges to zero), is compact. Then  $K=K'+D'-D\in \k$ and $U=D+K$ with $D$ unitary.

Then $U=D(I+D^*K)$. The operator $I+D^*K$ is unitary. Thus $D^*K$ is normal and compact: there exist mutually orthogonal selfadjoint projections $P_n$ of finite rank, such that
$$
D^*K=\sum_{n\ge 1} \lambda_n P_n.
$$
Put $P_0=I-\sum_{n\ge 1}P_n$. Then 
$$
1+D^*K=P_0+\sum_{n\ge 1} (1+\lambda_n)P_n.
$$
Since $I+D^*K$ is unitary, $|1+\lambda_n|=1$. Let $-\pi\le\chi_n<\pi$ such that $e^{i \chi_n}=1+\lambda_n$, and put
$$
X=\sum_{n\ge 1} \chi_n P_n.
$$
The fact that $1+\lambda_n$ accumulate only eventually at $1$, implies that $\{ \chi_n \}_{n \geq 1}$ accumulate only eventually at $0$. Thus $X$ is compact. Clearly $\|X\|\le\pi$ and $e^{iX}=I+D^*K$.
\end{proof}
The unitary group $\u_{\d+\k}$ of $\d+\k$ was studied in \cite{bv}. The above factorization was found there.
The following is a straightforward consequence:
\begin{coro}
$\u_{\d+\k}$ and $G_{\d+\k}$ are connected.
\end{coro}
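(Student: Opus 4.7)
The plan is to use the factorization from Proposition \ref{factores unitarios} to produce, for each unitary in $\d+\k$, a continuous path back to the identity, and then leverage connectedness of $\u_{\d+\k}$ via polar decomposition to handle the full linear group.

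First I would tackle $\u_{\d+\k}$. Given $U\in\u_{\d+\k}$, write $U=De^{iX}$ with $D=D_{\{e^{i\theta_n}\}}$ unitary diagonal ($-\pi\le\theta_n<\pi$) and $X=X^*\in\k$. The path
$$
\gamma_1(t)=De^{itX},\qquad t\in[0,1],
$$
lies in $\u_{\d+\k}$ (each factor is unitary; $e^{itX}-I\in\k$ by compactness of $X$) and is norm continuous because $t\mapsto e^{itX}$ is norm continuous. It joins $D=\gamma_1(0)$ to $U=\gamma_1(1)$. Next, define
$$
\gamma_2(t)=D_{\{e^{it\theta_n}\}_{n\ge 1}},\qquad t\in[0,1],
$$
which connects $I$ to $D$ within the diagonal unitaries. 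The boundedness $|\theta_n|\le\pi$ yields
$$
\|\gamma_2(t)-\gamma_2(s)\|=\sup_{n}|e^{it\theta_n}-e^{is\theta_n}|\le \pi|t-s|,
$$
so the path is norm continuous. Concatenating $\gamma_2$ and $\gamma_1$ gives a continuous path in $\u_{\d+\k}$ from $I$ to $U$, proving connectedness.

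For $G_{\d+\k}$, I would use the polar decomposition. Given $T\in G_{\d+\k}$, the element $T^*T$ is a positive invertible element of $\d+\k$, hence $|T|=(T^*T)^{1/2}\in\d+\k$ (by continuous functional calculus inside the C$^*$-algebra $\d+\k$) and is positive invertible. Then $U:=T|T|^{-1}\in\u_{\d+\k}$. Writing $|T|=e^{H}$ with $H=\log|T|=H^*\in\d+\k$, the path
$$
\gamma_3(t)=Ue^{tH},\qquad t\in[0,1],
$$
lies in $G_{\d+\k}$ and connects $U$ to $T$. Combined with the connectedness of $\u_{\d+\k}$ established above, this yields a continuous path in $G_{\d+\k}$ from $I$ to $T$.

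I do not anticipate any serious obstacle; the only point requiring care is the norm continuity of the diagonal path $\gamma_2$, which is where the explicit choice $-\pi\le\theta_n<\pi$ in Proposition \ref{factores unitarios} pays off by bounding the phase derivatives uniformly.
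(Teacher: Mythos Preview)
Your proof is correct and follows essentially the same approach as the paper: for $\u_{\d+\k}$ you use the factorization $U=De^{iX}$ from Proposition \ref{factores unitarios} and deform both factors to $I$ (the paper does this simultaneously via $U(t)=D(t)e^{itX}$ rather than by concatenation, but the idea is identical), and for $G_{\d+\k}$ you reduce to the unitary part via polar decomposition and connect $|T|$ to $I$ through positive invertibles (the paper invokes convexity of the positive invertibles instead of writing $|T|=e^H$, which amounts to the same thing).
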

\begin{proof}
By the above factorization, any element $U\in\u_{\d+\k}$ can be factorized $U=De^{iX}$, with $D=D_\dd$, $\dd=\{e^{i\theta_n}\}_{n\ge 1}$. Then $U(t)=D(t)e^{itX}$, with $D(t)=D_{\dd(t)}$, $\dd(t)=\{e^{it\theta_n}\}_{n\ge 1}$ is a continuous path of unitary elements in $\u_{\d+\k}$, such that $U(0)=I$ and $U(1)=U$.

Let $G\in G_{\d+\k}$, and $G=U|G|$ its polar decomposition, which remains in $\d+\k$. The set of positive invertible elements  in a C$^*$-algebra is convex, thus connected. As above, $U$ can be connected with $I$ by a continuous path. 
\end{proof}

\begin{quest}
One question that we consider interesting is whether the unitary group of $\d+\k$ is exponential, i.e., whether $\u_{\d+\k}$ equals $\{e^{i(D+K)}: D^*=D, K^*=K\}$. The {\it exponential rank} measures how many exponentials are needed to factorize any unitary element in the connected component of the identity. Clearly, from the above Proposition, this number is less or equal than $2$. H.X. Lin \cite{lin} proved that unital C$^*$-algebras with real rank zero (as $\d+\k$) have this invariant less or equal than $1+\epsilon$, meaning that any unitary element (in the connected component of the identity) is a limit of exponentials.
\end{quest}   

$\d+\k$ has the FU property (\cite{phillips}, Proposition 1.5): unitary elements with finite spectrum are norm dense in $\u_{\d+\k}$.

Let us complete this section with a factorization result for positive invertible elements. It is a particular case of a remarkable  general result by H. Porta and L. Recht \cite{prfactorizacion}. Let us transcribe their result:
\begin{teo}{\rm (Porta and Recht \cite{prfactorizacion}, Corollary 7)}
Let $\b\subset\a$ be unital C$^*$-algebras and $E:\a\to\b$ a conditional expectation. Then any positive and invertible element $a\in\a$ has a unique factorization
$$
a=b^{1/2}e^z b^{1/2},
$$
where $b\in\b$ (is positive and invertible) and $z^*=z$ with $E(z)=0$.
\end{teo}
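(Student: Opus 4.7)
I would reduce the factorization problem to a bijectivity statement for a smooth map between Banach spaces and then prove existence and uniqueness separately. Writing $b=e^y$ with $y\in\b_s$ (the self-adjoint part of $\b$), the factorization $a=b^{1/2}e^z b^{1/2}$ with $E(z)=0$ is equivalent to $F(y)=0$ for
$$
F(y) \;:=\; E\bigl(\log(e^{-y/2}\,a\,e^{-y/2})\bigr),
$$
with $z:=\log(e^{-y/2}a e^{-y/2})$ then automatically determined. The theorem thus amounts to showing that $F:\b_s\to\b_s$ has a unique zero.

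For existence, I would combine the inverse function theorem with a continuation argument. First I would compute the Fr\'echet derivative $dF_y$ using the integral formula for the derivative of the logarithm on the positive cone together with the $\b$-bimodule property $E(b_1 x b_2)=b_1 E(x) b_2$ for $b_1,b_2\in\b$, and show that it is an isomorphism of $\b_s$. The simplest test case $a=1$, $y=0$ is immediate: $F(y)=-y$ and so $dF_0=-\mathrm{id}$. Given local invertibility, I would then continue the implicit solution along the smooth path $a_t=a^t$, $t\in[0,1]$, which connects $1$ (where $y_0=0$ is a solution) to $a$; a properness estimate preventing blow-up then delivers a $y_1$ with $F(y_1)=0$, which gives the desired $b=e^{y_1}$ and $z$.

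For uniqueness, I would invoke the geometry of the positive cone $\a^+$ endowed with the Finsler metric $d(a_1,a_2)=\|\log(a_1^{-1/2} a_2 a_1^{-1/2})\|$. In this metric $\a^+$ is a geodesically convex space of nonpositive curvature and $\b^+$ is a totally geodesic submanifold; the relation $E(z)=0$ is precisely the infinitesimal orthogonality of the geodesic from $b$ to $a$ to the tangent space of $\b^+$ at $b$. Uniqueness of the foot of the perpendicular in nonpositive curvature then forces the factorization to be unique. The hard part will be making these nonpositive-curvature and orthogonality arguments rigorous in a Banach-Finsler setting, where the classical Riemannian tools are not directly available; an alternative, more algebraic route would be to establish global injectivity of $F$ directly by exploiting the operator monotonicity of $\log$ and the contractivity of $E$.
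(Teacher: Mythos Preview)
The paper does not prove this theorem. It is stated explicitly as a transcription of Corollary~7 from Porta and Recht \cite{prfactorizacion} and is used as a black box; the only argument the paper supplies is the one-line observation in the subsequent corollary that elements of $\ker\Delta$ in $\d+\k$ are compact. There is therefore nothing in the paper itself to compare your attempt against.

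That said, your outline is broadly in the spirit of the original Porta--Recht argument, which also exploits the homogeneous-space geometry of the positive cone $\a^+$ and reads the factorization as the statement that $b$ is the foot of the perpendicular from $a$ onto the totally geodesic submanifold $\b^+$. Your own caveats pinpoint the two genuine gaps. For existence, the continuation along $a_t=a^t$ requires a properness bound preventing $\|y_t\|$ from blowing up before $t=1$; without it the local inverse-function theorem gives you nothing global. For uniqueness, the ``foot of perpendicular is unique in nonpositive curvature'' heuristic is correct in finite-dimensional Riemannian geometry, but in the Banach--Finsler setting of $\a^+$ one needs a substitute (operator-convexity of the distance, or a direct monotonicity argument), which is precisely what Porta and Recht supply. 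Until those two pieces are made rigorous, what you have is an accurate plan rather than a proof.
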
 
This result is non trivial even for the case when $\a$ is the algebra of $3\times 3$ complex matrices and $\b$ the subalgebra of diagonal matrices.
In our case, where the conditional expectation is $\Delta:\d+\k\to\d$, this results yields the following:
\begin{coro}
Let $A\in\d+\k$ be positive and invertible. Then there exist a unique positive and invertible diagonal operator $D$, and a selfadjoint compact operator $Z$, with zero diagonal, such that 
$$
A=D^{1/2}e^ZD^{1/2}.
$$ 
\end{coro}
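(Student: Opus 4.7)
The plan is to apply the Porta--Recht theorem directly with $\a=\d+\k$, $\b=\d$, and $E=\Delta:\d+\k\to\d$. First I would verify that the hypotheses are met. The map $\Delta$ restricted to $\d+\k$ does land in $\d$: given $A=D+K\in\d+\k$, one has $\Delta(A)=D+\Delta(K)$, and since $\Delta$ preserves $\k$ (stated in Section 2), $\Delta(K)$ is a compact diagonal operator, so $\Delta(A)\in\d$. Idempotence and the $\d$-bimodule property are inherited from the standard conditional expectation on $\b(\h)$, so $\Delta$ is a conditional expectation of $\d+\k$ onto $\d$.

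Then Porta--Recht provides a unique factorization
$$
A = b^{1/2} e^{z} b^{1/2},
$$
where $b\in\d$ is positive and invertible and $z=z^*\in\d+\k$ satisfies $\Delta(z)=0$. Setting $D:=b$ gives the desired diagonal factor; uniqueness of $D$ and $Z$ is inherited directly from the uniqueness clause in the theorem.

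The only point that needs a separate (and very short) argument is that $z$ is actually compact, rather than merely an element of $\d+\k$. For this, I would decompose $z=D_0+K_0$ with $D_0\in\d$ and $K_0\in\k$. The condition $\Delta(z)=0$ becomes $D_0+\Delta(K_0)=0$, so $D_0=-\Delta(K_0)$, which is compact since $\Delta$ preserves $\k$. Therefore $z=D_0+K_0\in\k$, and selfadjointness of $z$ is preserved. Taking $Z:=z$ finishes the proof.

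There is no real obstacle here: the content is entirely in the Porta--Recht theorem, and the small verification is the observation that the zero-diagonal constraint forces the diagonal part of any decomposition of $z$ to cancel against a compact operator, hence to be compact itself.
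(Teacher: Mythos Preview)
Your proof is correct and follows the same route as the paper: apply Porta--Recht with $\a=\d+\k$, $\b=\d$, $E=\Delta$, and then check that $\Delta(z)=0$ forces $z\in\k$. The paper's version of the last step uses the distinguished decomposition $z=D_z+K_z$ (with $\Delta(K_z)=0$) to get $D_z=0$ immediately, whereas you use an arbitrary decomposition and conclude $D_0=-\Delta(K_0)\in\k$; these are the same observation phrased two ways.
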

\begin{proof}
Note that if $Z\in\d+\k$ belongs to the kernel of $\Delta$, then $Z$ is compact:  recall the distinguished decomposition $Z=D_Z+K_Z$, 
$$
0=\Delta(D_Z)+\Delta(K_Z)=D_Z.
$$
\end{proof}
\section{Ideals, characters and positive functionals}

The quotient map $\pi$ onto the Calkin algebra restricted to $\d+\k$ gives a $*$-epimorphism
$$
\pi=\pi|_{\d+\k}: \d+\k\to (\d+\k) /\k.
$$

\begin{rem}
Recall that $(\d+\k)/\k$ is a commutative algebra. Denote by  $\ccc$ the Banach space of sequences which converge to $0$. Then, clearly,
$$
(\d+\k)/\k\simeq \ell^\infty/\ccc.
$$
Indeed, if $T\in\d+\k$, and $D$ is given by the sequence $\dd=\{d_n\}_{n\ge 1}$, the isomorphism is given by $\pi(T)\mapsto [\dd]$. 
The maximal ideal space of $\ell^\infty/\ccc$ consists of maximal ideals of $\ell^\infty$ which contain the ideal $\ccc$. The maximal ideal space of $\ell^\infty$ is the Stone-Cech compactification $\beta\mathbb{N}$ of the natural numbers $\mathbb{N}$. The ideals which contain $\ccc$ are those on the residual set, i.e. $\beta\mathbb{N}\setminus \mathbb{N}$.
\end{rem}
\begin{coro}
There are uncountable many different maximal ideals in $\d+\k$, which contain $\k$. In fact, there are $2^c$ maximal ideals.
\end{coro}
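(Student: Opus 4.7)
The plan is to transfer the question through the isomorphism $(\d+\k)/\k \simeq \ell^\infty/\ccc$ stated in the preceding remark, and then invoke classical facts about the Stone--\v Cech compactification.

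First I would use the standard correspondence theorem for ideals: the maximal ideals of $\d+\k$ that contain $\k$ are in bijection, via the quotient map $\pi$, with the maximal ideals of the quotient $(\d+\k)/\k$. By the remark, this quotient is isomorphic as a unital commutative C$^*$-algebra to $\ell^\infty/\ccc$, so the problem is reduced to counting maximal ideals of $\ell^\infty/\ccc$.

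Next, by Gelfand duality for commutative unital C$^*$-algebras, the maximal ideals of $\ell^\infty/\ccc$ correspond bijectively to the points of its spectrum. Since $\ell^\infty\simeq C(\beta\NN)$ and $\ccc$ is the ideal of functions vanishing on the remainder, a maximal ideal of $\ell^\infty$ contains $\ccc$ if and only if its associated point of $\beta\NN$ lies in $\beta\NN\setminus\NN$. Hence the set of maximal ideals of $\d+\k$ containing $\k$ is in bijection with $\beta\NN\setminus\NN$.

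Finally, I would invoke the classical cardinality computation (due to Pospí\v{s}il): $|\beta\NN|=2^{c}$, where $c=2^{\aleph_0}$. Since $\NN$ is countable, $|\beta\NN\setminus\NN|=2^{c}$ as well. The sole non-routine input is this cardinality of $\beta\NN$, which is standard and can simply be cited. There is no real obstacle; the proof is essentially a translation through the quotient and Gelfand duality, followed by a citation.
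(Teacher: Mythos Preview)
Your proposal is correct and follows essentially the same route as the paper: transfer through the quotient $(\d+\k)/\k\simeq\ell^\infty/\ccc$ and identify maximal ideals with points of $\beta\NN\setminus\NN$. The paper phrases the transfer via characters $\Psi_\m=\psi_\m\circ\pi$ rather than via the ideal correspondence theorem, but this is only a cosmetic difference; your version is slightly more complete in that you explicitly cite Posp\'{\i}\v{s}il for the $2^c$ cardinality, which the paper takes for granted.
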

\begin{proof}
For any maximal ideal $\m$ in $\ell^\infty/\ccc$, let $\psi_\m:\ell^\infty/\ccc\to\mathbb{C}$ be the multiplicative functional such that $\ker \psi_\m=\m$. Then $\Psi_\m:=\psi_\m\circ\pi:\k+\d\to \mathbb{C}$ is a multiplicative functional. Clearly different maximal ideals $\m$ of $\ell^\infty/\ccc$ give rise to different characters $\Psi_\m$ of $\k+\d$, all of which vanish at $\k$. 
\end{proof}

Let us take a brief look at the positive functionals in $\d+\k$. There is an explicit way to decompose any $\varphi\ge 0$ in its  atomic and singular parts. Namely, given  $\varphi\ge 0$, it is clear that the restriction $\varphi_\k:=\varphi|_\k$ is a positive functional (eventually $\varphi_\k=0$) in $\k$. Then there exists a trace class operator $A_\varphi\ge 0$ such that 
$$
\varphi_\k(K)=Tr(A_\varphi K)
$$
for all   $K\in\k$. Also it is clear that $Tr(A_\varphi)=\|\varphi|_\k\| \le \|\varphi\|$.
Note that $\varphi_\k$ can be extended to $\d+\k$  naturally: $\varphi_\k(T)=Tr(A_\varphi T)$ (in fact, it can be extended to $\b(\h)$), and denote still by $\varphi_\k$ this extension. Put 
$$
\varphi_\infty:=\varphi-\varphi_\k.
$$
Then 
\begin{prop}
$\varphi_\infty\ge 0$, and $\varphi_\infty|_\k =0$.
\end{prop}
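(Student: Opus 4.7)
The plan is to dispatch the two claims separately. The identity $\varphi_\infty|_\k=0$ is immediate from the definitions, since for $K\in\k$ we have $\varphi_\k(K)=Tr(A_\varphi K)=\varphi(K)$ by the very choice of $A_\varphi$, whence $\varphi_\infty(K)=\varphi(K)-\varphi_\k(K)=0$. The substantive content is $\varphi_\infty\ge 0$, i.e., $\varphi(T)\ge Tr(A_\varphi T)$ for every $T\ge 0$ in $\d+\k$. My approach is to reduce positivity on $\d+\k$ to positivity on $\k$ by means of finite-rank spectral truncations adapted to $A_\varphi$.

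Using the spectral decomposition $A_\varphi=\sum_m\mu_m|f_m\rangle\langle f_m|$ with orthonormal $\{f_m\}$, $\mu_m\ge 0$ and $\sum_m\mu_m<\infty$, I would set $R_N=\sum_{m=1}^N|f_m\rangle\langle f_m|$, a finite-rank projection that commutes with $A_\varphi$. The heart of the proof is to show $\varphi(R_NTR_N)\le\varphi(T)$ for every $N$, starting from the block decomposition
\[
T = R_NTR_N + R_N^\perp T R_N^\perp + (R_NTR_N^\perp + R_N^\perp TR_N).
\]
The summand $R_N^\perp T R_N^\perp$ is positive, so $\varphi$ evaluates non-negatively on it. The parenthesized off-diagonal piece is selfadjoint and, because $R_N$ has finite rank, lies in $\k$; consequently $\varphi$ acts on it through $Tr(A_\varphi\,\cdot\,)$. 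The crucial input is the commutation $[A_\varphi,R_N]=0$, which forces $R_N^\perp A_\varphi R_N=0$; combined with cyclicity of the trace this yields
\[
Tr(A_\varphi R_N T R_N^\perp)=Tr(R_N^\perp A_\varphi R_N T)=0,
\]
and symmetrically for the conjugate term. Hence the off-diagonal contribution vanishes, and $\varphi(T)\ge\varphi(R_NTR_N)$.

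To finish, I let $N\to\infty$: since $R_NTR_N\in\k$,
\[
\varphi(R_NTR_N)=Tr(A_\varphi R_NTR_N)=\sum_{m=1}^N\mu_m\langle Tf_m,f_m\rangle,
\]
a non-decreasing sequence (using $T\ge 0$) that converges to $Tr(A_\varphi T)$ by dominated convergence, since $\sum_m\mu_m<\infty$ and $|\langle Tf_m,f_m\rangle|\le\|T\|$. Therefore $\varphi(T)\ge Tr(A_\varphi T)=\varphi_\k(T)$, giving $\varphi_\infty\ge 0$. The main obstacle I anticipate is the failure of the naive inequality $R_NTR_N\le T$ for generic positive $T$ (principal compressions of a positive operator need not be dominated by the operator itself), so one cannot deduce $\varphi(R_NTR_N)\le\varphi(T)$ from positivity of $\varphi$ alone; the device that rescues the argument is that the cross terms land in $\k$, where $\varphi$ equals $Tr(A_\varphi\,\cdot\,)$, and the commutation with $R_N$ annihilates precisely their contribution.
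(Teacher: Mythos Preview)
Your proof is correct and takes a genuinely different route from the paper's. The paper exploits the specific structure of $\d+\k$: it first splits $T=D_T+K_T$ via the conditional expectation $\Delta$, so that the question reduces to the inequality $Tr(A_\varphi D_T)\le\varphi(D_T)$ for the positive diagonal part $D_T=\Delta(T)$. It then approximates $D_T$ from below by its finite-rank diagonal truncations $D_n$, using the honest operator inequality $D_n\le D_T$ (so $\varphi(D_n)\le\varphi(D_T)$ follows from positivity of $\varphi$ alone) together with strong continuity of $Tr(A_\varphi\,\cdot\,)$ on bounded sets. Your argument instead bypasses the diagonal decomposition entirely and truncates along the spectral projections $R_N$ of $A_\varphi$; the key device---that the cross terms $R_NTR_N^\perp+R_N^\perp TR_N$ lie in $\k$ and pair to zero with $A_\varphi$ because $[A_\varphi,R_N]=0$---neatly circumvents the failure of $R_NTR_N\le T$ that you correctly flag. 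What your approach buys is greater generality: nothing in it uses the algebra $\d$, so the same argument shows that for \emph{any} unital C$^*$-subalgebra $\a\subset\b(\h)$ containing $\k$ as an ideal and any positive functional $\varphi$ on $\a$, the singular part $\varphi-Tr(A_\varphi\,\cdot\,)$ is positive. The paper's approach, while tied to $\d+\k$, has the pedagogical advantage that its approximants satisfy a genuine operator inequality, so only positivity of $\varphi$ is invoked at the comparison step.
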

\begin{proof}
The second assertion is clear.
Let $T\ge 0$ in $\d+\k$, $T=D_T+K_T$ (unique decomposition with $E(K_T)=0$). Note that $D_T=\Delta(T)\ge 0$. 
Then 
$$
\varphi(T)=\varphi(D_T)+\varphi(K_T)=\varphi(D_T)+Tr(A_\varphi K_T),
$$
whereas $\varphi_\k(T)=Tr(A_\varphi D_T)+Tr(A_\varphi K_T)$. Thus, in order to check that $\varphi_\infty\ge 0$ we must show that
$$
Tr(A_\varphi D_T)\le \varphi(D_T).
$$
Let $D_n$ be an increasing sequence of finite rank diagonal operators such that $D_n\to D_T$ strongly (for instance, $D_n$ be the $n$-truncation of $D_T$). Then, since  $\varphi_\k$ is strongly continuous on bounded sets, 
$$
\varphi_\k(D_n)\to \varphi_\k(D_T).
$$
On the other hand, since $D_T-D_n\ge 0$,  and $D_n$ are compact,
$$
\varphi_\k(D_n)=\varphi(D_n)\le \varphi(D_T),
$$
which finishes the proof.
\end{proof}
Thus we have the decomposition $\varphi=\varphi_\k+\varphi_\infty$, with $\varphi_\k$ normal (i.e., strongly continuous on bounded sets) and $\varphi_\infty$ singular (i.e., vanishing on compact elements of $\d+\k$). These singular positive functionals are in one to one correspondence with positive functionals in $\ell^\infty / {\bf c_0}$.

The following result, shows, in particular, that the maximal ideals of $\ell^\infty/\cc_0$ induce all the maximal ideals of $\d+\k$.

\begin{prop}
Let $\ii\ne 0$ be an  ideal of $\d+\k$. Then $\k\subset\ii$.
\end{prop}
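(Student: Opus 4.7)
The strategy is to show that $\ii$ contains every rank-one diagonal projection $E_n:=|e_n\rangle\langle e_n|$, and then to approximate arbitrary compact operators by finite-rank compressions.

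First I would pick a nonzero $T\in\ii$ and pass to the positive element $a:=T^*T\in\ii$, which is nonzero since $T\ne 0$. The key compression identity is
$$
E_n\, a\, E_n \;=\; \langle a e_n,e_n\rangle\, E_n,
$$
a direct consequence of $E_n v=\langle v,e_n\rangle e_n$. Since $E_n\in\d\subset\d+\k$ and $\ii$ is a two-sided ideal, this element lies in $\ii$.

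Next I would observe that $\langle a e_n,e_n\rangle>0$ for at least one $n$. Indeed, $\langle a e_n,e_n\rangle=\|a^{1/2}e_n\|^2$; if these scalars vanished for every $n$, then $a^{1/2}$ would annihilate the orthonormal basis $\{e_n\}$ and hence vanish, forcing $a=0$, a contradiction. Fixing such an $n_0$, the compression identity yields
$$
E_{n_0}\;=\;\langle a e_{n_0},e_{n_0}\rangle^{-1}\, E_{n_0}\,a\,E_{n_0}\;\in\;\ii.
$$

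To propagate, I would use the transposition unitary $V_{n_0 m}\in\b(\h)$ that swaps $e_{n_0}$ with $e_m$ and fixes every other basis vector. Since $V_{n_0 m}-I$ has rank at most two, $V_{n_0 m}\in\d+\k$, and conjugation gives $E_m=V_{n_0 m}E_{n_0}V_{n_0 m}^*\in\ii$ for every $m$. Consequently all finite-rank diagonal projections $F_N:=\sum_{k=1}^N E_k$ lie in $\ii$. Finally, for any $K\in\k$, compactness of $K$ together with $F_N\to I$ strongly yields $F_N K F_N\to K$ in norm; since $F_N\in\ii$, each $F_N K F_N$ belongs to $\ii$, and closedness of $\ii$ forces $K\in\ii$. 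Thus $\k\subset\ii$.

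The only delicate step is the positivity observation that some diagonal entry of $a=T^*T$ is strictly positive, which is what allows the compression $E_n a E_n$ to produce a nonzero scalar multiple of $E_n$; the remainder is routine propagation by unitary conjugation inside $\d+\k$ and the standard norm approximation of compacts by finite-rank truncations.
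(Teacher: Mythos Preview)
Your proof is correct and complete (assuming, as the paper does, that ``ideal'' means closed two-sided ideal). It is, however, more elaborate than the paper's argument and organized differently.

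The paper proceeds in one stroke: given $0\ne D_0+K_0\in\ii$, multiply on the left by a rank-one diagonal projection $E_j$. If $E_j(D_0+K_0)\ne 0$ for some $j$, this is a nonzero finite-rank element of $\ii$; otherwise every row vanishes and $D_0+K_0=0$. Once $\ii$ contains a nonzero compact operator, the authors invoke (implicitly) the simplicity of $\k$: the intersection $\ii\cap\k$ is a nonzero closed ideal of $\k$, hence all of $\k$.

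Your route is more self-contained. By passing to $a=T^*T$ and compressing with $E_n$ on both sides you land directly on a specific rank-one projection $E_{n_0}\in\ii$; then you propagate to every $E_m$ via the finite-rank-perturbation unitaries $V_{n_0 m}\in\d+\k$, and finally recover all of $\k$ by the norm approximation $F_NKF_N\to K$. This avoids any appeal to the ideal structure of $\k$ and makes the inclusion $\k\subset\ii$ entirely explicit, at the cost of a few extra lines. Both approaches ultimately rest on the same observation---that rank-one diagonal projections lie in $\d+\k$ and can be used to cut down an arbitrary nonzero element to something finite-rank---but the paper outsources the ``from one compact to all compacts'' step to the simplicity of $\k$, whereas you carry it out by hand.
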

\begin{proof}
Clearly, it suffices to show that there is a non zero compact element in $\ii$. Let $0\ne D_0+K_0\in \ii$, $D_0=D_\dd$ and $E_j$ stands for the diagonal projection with $1$ in the $j$-coordinate. Note that if $E_j(D_0+K_0)\neq 0$, then we are done. So we assume that $E_j(D_0+K_0)\neq 0$ for all $j \geq 1$. Therefore the diagonal $\dd=\{ d_j \}_{j \geq 1}$ satisfies $d_j=-(K_0)_{jj}$, $j \geq 1$, which implies that $D_0$ is compact, and then $D_0+K_0$ also is compact.
%We may suppose $D_0\ge 0$. Suppose that $D_0=D_\dd$ is non compact. Let $E_j$ stand for the rank one projection onto the $j$-th coordinate,  $F_N=\sum_{j=1}^N E_{j}$, and $F=\sum_{j=1}^\infty E_j$. Then $F_NK_0\to FK_0$ in norm. Suppose that $E_j(D_0+K_0)=0$ for all $j\ge 1$. This would imply that $F_ND_0+F_NK_0=0$ for all $N\ge 1$. Therefore $F_ND_0\to -FK_0$ in norm. On the other hand, since $F_N\uparrow F$ strongly, and $F$ is the projection onto the support subspace of $D_0$ (i.e., $FD_0=D_0F=D_0$), we get that $F_ND_0\to D_0$ strongly, in fact, in norm. Thus $D_0=-FK_0\in\k$, a contradiction. It follows that there exists $j_0$ such that 
%$$
%E_{j_0}(E_0+D_0)\ne 0,
%$$
%and clearly $E_{j_0}(D_0+K_0)$ is a rank one element in $\ii$.
\end{proof}

In particular, this allows us to describe the irreducible representations of $\d+\k$:
\begin{coro}
The irreducible representations of $\d+\k$ are the multiplicative functionals of $\d+\k$ (which necessarily vanish at $\k$,i.e., which are given by  characters of $\ell^\infty/{\bf c_0}$), or they are unitarily equivalent to the inclusion representation $\d+\k\subset \b(\h)$.
\end{coro}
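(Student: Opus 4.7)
The plan is to analyze an irreducible representation $\rho:\d+\k\to\b(\mathcal{H}_\rho)$ via its kernel. Since $\ker\rho$ is a closed two-sided ideal of $\d+\k$, the previous proposition forces a dichotomy: either $\ker\rho=0$, or $\k\subseteq\ker\rho$. These two cases will yield exactly the two alternatives in the statement.

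If $\k\subseteq\ker\rho$, then $\rho$ factors through the commutative quotient $(\d+\k)/\k\simeq\ell^\infty/\ccc$. An irreducible representation of a commutative $C^*$-algebra is necessarily one-dimensional (each $\rho(a)$ commutes with the whole image, hence is scalar), so $\rho$ is a multiplicative functional on $\d+\k$ vanishing on $\k$, obtained by precomposing a character of $\ell^\infty/\ccc$ with $\pi$.

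Suppose instead $\ker\rho=0$. The closed subspace $\overline{\rho(\k)\mathcal{H}_\rho}$ is $\rho(\d+\k)$-invariant and nonzero by faithfulness, so by irreducibility it equals $\mathcal{H}_\rho$; thus $\rho|_\k$ is a nondegenerate representation of $\k(\h)$. I claim it is moreover irreducible: given a bounded approximate identity $\{e_\alpha\}\subset\k$, one has $\rho(T)x=\lim_\alpha \rho(Te_\alpha)x$ for every $x\in\mathcal{H}_\rho$ and every $T\in\d+\k$, and $Te_\alpha\in\k$, so any closed $\rho(\k)$-invariant subspace is automatically $\rho(\d+\k)$-invariant. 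Since $\k(\h)$ admits, up to unitary equivalence, a unique irreducible representation—its defining action on $\h$—there is a unitary $V:\h\to\mathcal{H}_\rho$ with $\rho(K)=VKV^*$ for all $K\in\k$. Replacing $\rho$ by $V^*\rho(\cdot)V$, I may assume $\mathcal{H}_\rho=\h$ and $\rho|_\k=\mathrm{id}$. To extend the identity to all of $\d+\k$, I use the ideal property: for any $T\in\d+\k$ and any rank-one $K=x\otimes y^*$ with $\|y\|=1$, the product $TK$ lies in $\k$, so $\rho(T)K=\rho(T)\rho(K)=\rho(TK)=TK$; evaluating on $y$ gives $\rho(T)x=Tx$, and since $x$ was arbitrary, $\rho(T)=T$.

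The main non-routine ingredient is the uniqueness (up to unitary equivalence) of the irreducible representation of $\k(\h)$, which supplies the intertwining unitary $V$ in the faithful case; the rest is standard manipulation with ideals, relying on the previous proposition to split the dichotomy and on the ideal property of $\k$ to propagate $\rho|_\k=\mathrm{id}$ to all of $\d+\k$.
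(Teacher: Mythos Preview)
Your proof is correct, but it follows a different route from the paper's. The paper argues via pure states and the GNS construction: using the earlier decomposition $\varphi=\varphi_\k+\varphi_\infty$ of positive functionals, a pure state must satisfy either $\varphi_\infty=0$ (so $\varphi=Tr(A\,\cdot)$ with $A$ a rank-one projection, and GNS recovers the inclusion representation) or $\varphi_\k=0$ (so $\varphi$ descends to a pure state, hence a character, of $\ell^\infty/\ccc$). You instead bypass states entirely, working directly with the representation: the ideal proposition gives the kernel dichotomy, and in the faithful case you invoke the uniqueness of the irreducible representation of $\k(\h)$, then propagate the intertwiner to all of $\d+\k$ via the ideal property. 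Your argument is more self-contained---it does not rely on the $\varphi_\k+\varphi_\infty$ decomposition developed earlier in the section---and is closer to the standard textbook treatment of C$^*$-algebras containing $\k(\h)$; the paper's argument has the virtue of explicitly connecting the representation theory back to the preceding analysis of positive functionals.
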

\begin{proof}
By the above proposition, one has that the pure states of are either of the form $\varphi_\k=Tr(A\ \cdot)$ or $\varphi_\infty$. In the first case, since $\varphi$ is pure, it is clear that $A$ is a rank one projection. The corresponding G.N.S. representation is (unitarily equivalent to) the inclusion representation. In the second case, $\varphi=\varphi_\infty$  is pure, thus the corresponding representation induces an irreducible representation of $\ell^\infty/{\bf c_0}$, which is one dimensional, i.e. $\varphi_\infty$ is a character.
\end{proof}

The following result follows.

\begin{lem}\label{en K}
Let $\theta$ be a $*$-automorphism of $\d+\k$. Then $\theta(\k)=\k$
\end{lem}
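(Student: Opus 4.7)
The plan is to deduce this from the preceding proposition, which says that every non-zero (closed, two-sided) ideal of $\d+\k$ contains $\k$. In other words, $\k$ is the minimum non-zero closed ideal of $\d+\k$, and this intrinsic characterization is automatically preserved by any $*$-automorphism.

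First, I would note that since $\theta$ is a $*$-automorphism of a $C^*$-algebra, it is isometric, so $\theta(\k)$ is again a closed two-sided $*$-ideal of $\d+\k$; and because $\theta$ is injective and $\k \neq \{0\}$, the image $\theta(\k)$ is non-zero. The previous proposition then yields
\[
\k \subset \theta(\k).
\]

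Next I would apply exactly the same reasoning to the $*$-automorphism $\theta^{-1}$: the ideal $\theta^{-1}(\k)$ is likewise a non-zero closed two-sided $*$-ideal, so the proposition gives $\k \subset \theta^{-1}(\k)$. Applying $\theta$ to both sides of this inclusion yields $\theta(\k) \subset \k$, and combining with the first inclusion gives $\theta(\k) = \k$.

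There is really no serious obstacle: the only thing to check is that the preceding proposition is being used with its standard $C^*$-algebraic reading (closed two-sided ideals), which is exactly the class of ideals preserved by $*$-automorphisms, so the symmetric argument with $\theta$ and $\theta^{-1}$ closes the proof in two lines.
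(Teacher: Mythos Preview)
Your argument is correct and is essentially the same as the paper's: both use the preceding proposition (every non-zero ideal of $\d+\k$ contains $\k$) applied to $\theta(\k)$ and to $\theta^{-1}(\k)$, and then push the second inclusion forward by $\theta$ to obtain equality.
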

\begin{proof}
Since $\theta(\k)$ is a proper ideal of $\d+\k$, $\k\subset \theta(\k)$. Also, by the same argument, $\k\subset\theta^{-1}(\k)$. Then, $\theta(\k)\subset \theta \theta^{-1}(\k)=\k$.
\end{proof}
Then  a  $*$-automorphism $\theta$ of $\d+\k$ induces, by restriction, a $*$-automorphism $\theta|_{\k}$ of $\k$. These  are given by  conjugation with  unitary operators in $\b(\h)$. Thus, there exists a unitary operator $U\in\u(\h)$ such that
$$
\theta(K)=UKU^* \ , \ \ \hbox{ for all } \ K\in\k.
$$
On the other hand, if an automorphism $\theta$ leaves $\d$ invariant (i.e., $\theta(\d)\subset\d$),  it
must be $\theta(\d)=\d$. Indeed, since $\d\subset\d+\k$ is maximal abelian, then so is $\theta(\d)\subset \d+\k$, and so $\theta(\d)=\d$. Thus $\theta$  induces an automorphism of $\ell^\infty$. It is an easy exercise that these are given by permutations of $\mathbb{N}$; if $\sigma$ is a permutation of $\mathbb{N}$, then the corresponding automorphism is given by $\{x_n\}_{n\ge 1}\mapsto \{x_{\sigma(n)}\}_{n\ge 1}$. Clearly the operator $U_\sigma$ acting in $\h$, given by 
$$
U_\sigma\xi=\sum_{n\ge 1} \xi_n e_{\sigma(n)} \ , \ \ \hbox{ if } \xi=\sum_{n\ge 1} \xi_n e_n,
$$
is a unitary operator in $\h$. This unitary operator implements in turn the automorphism $\theta_\sigma=\theta$, 
$$
\theta_\sigma:\d+\k\to\d+\k \ , \ \ \theta_\sigma(T)=U_\sigma TU_\sigma^* \ , \hbox{ if }  T\in \d+\k.
$$ 
Note that $U_\sigma\in\d+\k$ if and only if the permutation $\sigma$ leaves all but a finite set of numbers fixed. Otherwise, $\theta_\sigma$ is an outer automorphism.

The following result  shows that all automorphisms of $\d+\k$ are {\it approximately inner} \cite{kadisonringrose}:
\begin{lem}
Let $\theta$ be a $*$-automorphism of $\d+\k$. Then there exists a unitary operator $U\in\b(\h)$ such that $\theta(T)=UTU^*$.
\end{lem}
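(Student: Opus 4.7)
The plan is to bootstrap from the restriction $\theta|_\k$, which the preceding discussion has already shown is implemented by some unitary $U\in\u(\h)$, i.e. $\theta(K)=UKU^*$ for every $K\in\k$. The goal is then to show that this very same $U$ implements $\theta$ on all of $\d+\k$; since every $T\in\d+\k$ is a sum $D+K$, it suffices to show $\theta(D)=UDU^*$ for every $D\in\d$.

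To see this, fix $D\in\d$ and any $K\in\k$. Then $DK\in\k$, so on the one hand
\[
\theta(DK)=U(DK)U^{*},
\]
and on the other hand, using that $\theta$ is multiplicative together with Lemma \ref{en K},
\[
\theta(DK)=\theta(D)\theta(K)=\theta(D)\,UKU^{*}.
\]
Equating these two expressions and multiplying on the right by $U$ and on the left by $U^{*}$ yields
\[
\bigl(U^{*}\theta(D)U-D\bigr)K=0\qquad\text{for every }K\in\k.
\]

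The final step is to conclude that the operator $S:=U^{*}\theta(D)U-D\in\b(\h)$ must vanish. This is the standard observation that $\k$ acts irreducibly on $\h$: taking $K$ to be a rank-one operator of the form $K(y)=\langle y,\xi\rangle\xi$ for an arbitrary unit vector $\xi$ gives $\langle y,\xi\rangle S\xi=0$ for all $y\in\h$, hence $S\xi=0$, hence $S=0$. Therefore $\theta(D)=UDU^{*}$, and combining with the action on $\k$,
\[
\theta(D+K)=UDU^{*}+UKU^{*}=U(D+K)U^{*}
\]
for every $T=D+K\in\d+\k$.

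The main subtlety is really only conceptual: one must notice that although $U$ is singled out only by its action on the ideal $\k$, the multiplicativity of $\theta$ forces its action on any $D\in\d$ to be consistent with the action on $\k$, and the richness of $\k$ (its rank-one operators) then pins down $\theta(D)$ completely. No delicate estimates or spectral analysis are required.
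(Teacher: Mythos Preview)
Your argument is correct and is genuinely different from the paper's. Both proofs start from the same point: the unitary $U$ implementing $\theta|_\k$. From there the paper proceeds by an order-and-approximation argument: for $D\ge 0$ diagonal it sandwiches $D$ between compact truncations $D_N\le D$ and compact-plus-scalar majorants $D'_N\ge D$, uses $\theta(D_N)=UD_NU^*$, $\theta(D'_N)=UD'_NU^*$, and passes to the strong limit to obtain both $UDU^*\le\theta(D)$ and $UDU^*\ge\theta(D)$. Your route is purely algebraic: from $\theta(DK)=\theta(D)\theta(K)$ and $\theta(DK)=UDKU^*$ you derive $(U^*\theta(D)U-D)K=0$ for every $K\in\k$, and then the irreducibility of $\k$ (rank-one operators suffice) forces $U^*\theta(D)U=D$. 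This is shorter, avoids positivity and strong convergence entirely, and is the standard device for showing that an automorphism implemented on an essential ideal is implemented on the whole algebra. The paper's approach, by contrast, makes explicit use of the concrete diagonal structure and the order on self-adjoints, which fits the paper's hands-on style but is not needed here.
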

\begin{proof}
By Lemma \ref{en K} and the subsequent remark, there exists a unitary operator $U\in\b(\h)$ such that $\theta(K)=UKU^*$ for all $K\in\k$. We claim that this unitary implements $\theta$ in the whole algebra $\d+\k$. Let $0\le D=D_\dd\in\d$, where $\dd=\{d_n\}_{n\ge 1}$. Consider $D_N$ given by the sequence $\dd_N$, which is the truncation of the sequence $\dd$ at $N$. Then it is clear that $D_N\le D$ and $D_N\to D$ in the strong operator topology. The first assertion implies that $\theta(D_N)\le \theta(D)$. On the other hand, since $D_N$ is compact, $\theta(D_N)=UD_NU^*$. Also it is clear that $UD_NU^*\to UDU^*$ in the strong operator toplogy. It follows that 
$$
UDU^*\le \theta(D).
$$
Pick now $D'_N=D_{\dd_N'}$, where $\dd_N'=\{(d'_N)_n\}_{n\ge 1}$ is given by 
$$
(d'_N)_n=\left\{ \begin{array}{l} d_n , \hbox{ if } n\le N \\ \|\dd\|_\infty , \hbox{ if } n > N \end{array} \right. .
$$
Clearly, $D'_N=D_N+\|\dd\|_\infty (I-E_N)$, where $E_N$ is the (diagonal) projection onto the subspace generated by $e_1,\dots , e_N$.
Also it is clear that $D'_N\ge D$. Therefore $\theta(D'_N)\ge \theta(D)$. Note that 
$$
\theta(D'_N)=\theta(D_N)+\|\dd\|_\infty (I-\theta(E_N))=UD_NU^*+\|\dd\|_\infty (I-UE_NU^*)
=U(D_N+\|\dd\|_\infty (I-E_N))U^*
$$
$$
=UD'_NU^*.
$$
A simple calculation shows that $D'_N\to D$ strongly, and  thus again $UD'_NU^*\to UDU^*$ strongly. Therefore
$UDU^*\ge \theta(D)$. Thus, $\theta(D)=UDU^*$ for all $D\in\d$ with $D\ge 0$. Then it holds for all $D\in\d$.
\end{proof}
\begin{coro}
All automorphisms of $\d+\k$ are strong (and weak) operator continuous. 
\end{coro}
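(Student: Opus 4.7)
The corollary is essentially a direct consequence of the preceding lemma, so my plan is very short. The preceding lemma produces, for any $*$-automorphism $\theta$ of $\d+\k$, a unitary $U\in\b(\h)$ such that $\theta(T)=UTU^*$ for all $T\in\d+\k$. Thus $\theta$ is obtained as the restriction to $\d+\k$ of the map $\mathrm{Ad}(U)\colon \b(\h)\to\b(\h)$, $T\mapsto UTU^*$, and the problem reduces to showing that $\mathrm{Ad}(U)$ is SOT and WOT continuous on $\b(\h)$.

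For this I would simply check the two topologies by hand. If $T_\alpha\to T$ in SOT and $\xi\in\h$, then
$$
UT_\alpha U^*\xi = U\bigl(T_\alpha(U^*\xi)\bigr)\longrightarrow U\bigl(T(U^*\xi)\bigr)=UTU^*\xi,
$$
using the SOT continuity of left multiplication by the bounded operator $U$ together with the fact that $U^*\xi$ is a fixed vector on which $T_\alpha\to T$ strongly. For WOT, if $T_\alpha\to T$ weakly and $\xi,\eta\in\h$, then
$$
\langle UT_\alpha U^*\xi,\eta\rangle = \langle T_\alpha U^*\xi, U^*\eta\rangle \longrightarrow \langle TU^*\xi, U^*\eta\rangle = \langle UTU^*\xi,\eta\rangle.
$$

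Restricting to $\d+\k$ (endowed with the subspace SOT/WOT topology inherited from $\b(\h)$), the continuity of $\mathrm{Ad}(U)$ gives continuity of $\theta$, which is the conclusion. There is no real obstacle here since the entire content sits in the already-established lemma; no approximation argument, bounded-set restriction, or topological subtlety needs to be introduced.
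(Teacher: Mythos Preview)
Your proposal is correct and matches the paper's intent: the paper states the corollary without proof, treating it as an immediate consequence of the preceding lemma that every $*$-automorphism is $\mathrm{Ad}(U)$ for some unitary $U\in\b(\h)$. Your explicit verification that $\mathrm{Ad}(U)$ is SOT and WOT continuous is exactly the routine check being left to the reader.
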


Clearly there are unitaries in $\b(\h)$ that do not induce $*$-homomorphisms of $\d+\k$: pick a selfadjoint operator $A$ which does not belong to $\d+\k$; by the theorem of Weyl and von Neumann, there exists a unitary $W$ in $\b(\h)$ such that $W^*AW\in \d+\k$, then $W$ does not induce a $*$-homomorphism of $\d+\k$.
The condition on $U\in\u(\h)$ that $UDU^*\in\d+\k$ for all $D\in\d$  is clearly necessary for $\theta=Ad(U)$ to define an automorphism of $\d+\k$. Let us show that it is also sufficient:
\begin{prop}
Let $U\in\u(\h)$ such that $UDU^*\in\d+\k$, for all $D \in \d$. Then $Ad(U)$ defines an automorphism of $\d+\k$.
\end{prop}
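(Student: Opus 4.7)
The plan is to show that $\mathrm{Ad}(U)$ restricts to a $*$-automorphism of $\d+\k$. Since $\mathrm{Ad}(U)$ is already a $*$-automorphism of $\b(\h)$, injectivity and the homomorphism properties are free; what actually needs checking is (i) that it maps $\d+\k$ into itself, and (ii) that its image is all of $\d+\k$ (i.e., it is surjective onto $\d+\k$).

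For (i) I would use that $U\k U^* = \k$ (since $\k$ is an ideal preserved by every unitary conjugation in $\b(\h)$) together with the hypothesis $U\d U^*\subset \d+\k$. Writing $T=D+K$ with $D\in\d$ and $K\in\k$, one has $UTU^* = UDU^* + UKU^*\in (\d+\k)+\k\subset \d+\k$, so $\mathrm{Ad}(U)(\d+\k)\subset\d+\k$.

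The main work is (ii): I would show the same inclusion for $\mathrm{Ad}(U^*)$. This is the step that is not formally symmetric in the hypothesis, so it is the expected obstacle. I would reduce it to the characterization in Proposition 2.3, and in particular to verifying condition (d): given $D\in\d$ and an arbitrary diagonal projection $P\in\d$, I need $[U^*DU,P]\in\k$. The key computation is
$$
U[U^*DU,P]U^* = DUPU^* - UPU^*D = [D,UPU^*].
$$
By hypothesis $UPU^*\in\d+\k$, and $D\in\d\subset\d+\k$, so Proposition 2.3 gives $[D,UPU^*]\in\k$. Conjugating back by $U^*$ (which preserves $\k$) yields $[U^*DU,P]\in\k$, and then Proposition 2.3 yields $U^*DU\in\d+\k$. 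Hence $\mathrm{Ad}(U^*)$ also maps $\d+\k$ into itself.

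Finally, combining (i) and the analogous statement for $U^*$: for any $T\in\d+\k$, the element $S:=U^*TU$ lies in $\d+\k$, and $\mathrm{Ad}(U)(S)=T$. Thus $\mathrm{Ad}(U)|_{\d+\k}:\d+\k\to\d+\k$ is surjective, and therefore a $*$-automorphism, completing the proof.
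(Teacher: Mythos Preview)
Your proof is correct, and it takes a genuinely different route from the paper's.

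The paper passes to the Calkin algebra: writing $u=\pi(U)$, the hypothesis says $Ad(u)(\pi(\d))\subset\pi(\d)$; since $\pi(\d)$ is a masa in $\b(\h)/\k$ and $Ad(u)$ is an automorphism of the Calkin algebra, the image $Ad(u)(\pi(\d))$ is again a masa, hence equals $\pi(\d)$. Lifting this equality gives $\d\subset Ad(U)(\d+\k)$ and surjectivity follows. Your argument instead shows directly that $U^*\d U\subset\d+\k$ by checking the commutator criterion of Proposition~2.3: for $D\in\d$ and $P\in\d$ a projection, $U[U^*DU,P]U^*=[D,UPU^*]$, and the right-hand side is compact because both $D$ and $UPU^*$ lie in $\d+\k$. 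This is slicker and entirely self-contained once Proposition~2.3 is in hand (though of course that proposition already encodes the masa property). The paper's approach, by contrast, exposes the structural reason---automorphisms carry masas to masas---which feeds into the surrounding discussion of automorphisms and the question of whether they all arise from $\u_0(\h)$.
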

\begin{proof}
Clearly $Ad(U)$ defines an injective $*$-homomorphism. Let us check that it is onto. Clearly $Ad(U)(\k)=\k$. So we must show that $\d\subset Ad(U)(\d+\k)$. Let $D\in\d$. Denote by $u=\pi(U)$, $d=\pi(D)$ in $\b(\h)/\k$. Clearly $Ad(u)$ is a $*$-automorphism of $\b(\h)/\k$, and the hypothesis implies that  $\b=Ad(u)(\pi(\d))$ is an  C$^*$-subalgebra of $\pi(\d)$. As remarked in Section \ref{preliminar}, $\pi(\d)\subset \d+\k$ is a maximal abelian subalgebra. Then $Ad(u)(\pi(\d))$, being the image of a masa is also a masa. Then $Ad(u)(\pi(\d))=\pi(\d)$. Thus, there exists $D'\in\d$ such that $Ad(u)(d')=d$, where $d'=\pi(D')$. Then there exists $K\in\k$ such that 
$$
D=UD'U^*+K=U(D'+U^*KU)U^*\in Ad(U)(\d+\k).
$$
\end{proof}
We know so far, explicitly, the existence of the following automorphisms:
\begin{enumerate}
\item
For $\ww=\{w_n\}_{n\ge 1}$, with $|w_n|=1$, $\theta_\ww=Ad(D_\ww)$.
\item
For $X^*=X\in\k$, $\theta_X=Ad(e^{iX})$.
\item
For $\sigma$ a permutation of $\mathbb{N}$, the automorphism $\theta_\sigma=Ad(U_\sigma)$ described above.
\end{enumerate}
Clearly, among these, only the $\theta_\sigma$, with $\sigma$ of infinite support  are outer. Note also that
$U_\sigma D_\ww =D_{\sigma(\ww)} U_\sigma$, where $\sigma(\ww)=\{w_{\sigma(n)}\}_{n\ge 1}$, and that $U_\sigma e^{iX}=e^{iU_\sigma X U_\sigma^*} U_\sigma$; so that
$$
\theta_\sigma \theta_\ww \theta_X=\theta_{\sigma(\ww)} \theta_{U_\sigma X U_\sigma^*} \theta_\sigma.
$$
Also, due to the factorization of $\u_{\d+\k}$ given in Proposition \ref{factores unitarios}, it is clear that any inner automorphism can be expressed as $\theta_\ww\theta_X$, for suitable $\ww$ and $X$. If $\sigma_1,\sigma_2$ are permutations of $\mathbb{N}$, $\theta_{\sigma_1}\theta_{\sigma_2}=\theta_{\sigma_1\sigma_2}$.
Thus, any autormorphism in the group  generated by these types of automorphisms, can be expressed as a product 
\begin{equation}\label{tetas}
\theta_\ww\theta_X\theta_\sigma.
\end{equation}
\begin{quest}\label{pregunta}
Does this  group exhaust the whole automorphism group of $\d+\k$. In other words, are all unitaries $U$ of $\b(\h)$ which satisfy $U\d U^*\subset\d+\k$ of the form $U=D_\ww e^{iX} U_\sigma$, with $X=X^*$ compact? 
\end{quest}
Let us denote by $\u_0(\h)$ the group generated by unitaries $U_\sigma$, $D_\ww$ and $e^{iX}$, for  $\sigma$  permutations of $\mathbb{N}$, $\ww$ of modulus $1$ and $X^*=X$ compact. Note that due to the above description, we can write
\begin{equation}\label{union}
\u_0(\h)=\bigcup_{\sigma\in\s(\mathbb{N})} U_\sigma \cdot\u_{\d+\k},
\end{equation}
where $\s(\mathbb{N})$ denotes the group of permutations of $\mathbb{N}$. Denote by $\s_f(\mathbb{N})\subset\s(\mathbb{N})$ the subgroup of permutation with finite support, i.e. which leave a co-finite set fixed.  The above description can be refined, namely:
\begin{equation}\label{union discreta}
\u_0(\h)=\bigcup_{[\sigma]\in\s(\mathbb{N}) / \s_f(\mathbb{N}) } U_\sigma \cdot\u_{\d+\k}.
\end{equation}
To prove this fact, we need the following Lemma:
\begin{lem}\label{lema eduardo}
Let $\sigma$ be a permutation of $\mathbb{N}$ of infinite support, and  $T=D+K\in\d+\k$. Then
$$
\|U_\sigma-T\|\ge 1.
$$
If additionally $T\in\u_{\d+\k}$, then 
$$
\|U_\sigma-T\|\ge \sqrt2.
$$
\end{lem}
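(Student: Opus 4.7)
The plan is to test the operator $U_\sigma - T$ against the basis vectors $e_n$ for indices $n$ that lie in the support of $\sigma$ and are large.

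First, fix any decomposition $T = D + K$ with $D = D_{\dd} \in \d$, $\dd = \{d_n\}_{n \geq 1}$, and $K \in \k$. Since $\sigma$ has infinite support, pick an increasing sequence $n_k \to \infty$ with $\sigma(n_k) \neq n_k$. Compute
\[
(U_\sigma - T)e_{n_k} = e_{\sigma(n_k)} - d_{n_k} e_{n_k} - K e_{n_k}.
\]
Because $e_{\sigma(n_k)} \perp e_{n_k}$, the first two terms form an orthogonal pair, giving
\[
\|e_{\sigma(n_k)} - d_{n_k} e_{n_k}\|^2 = 1 + |d_{n_k}|^2 \geq 1.
\]
The triangle inequality then yields
\[
\|U_\sigma - T\| \geq \|(U_\sigma - T)e_{n_k}\| \geq \|e_{\sigma(n_k)} - d_{n_k} e_{n_k}\| - \|K e_{n_k}\|.
\]
The orthonormal sequence $e_{n_k}$ converges weakly to $0$, and since $K$ is compact it sends weakly null sequences to norm-null sequences, so $\|K e_{n_k}\| \to 0$. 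Passing to the limit gives $\|U_\sigma - T\| \geq 1$, proving the first assertion.

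For the sharper bound when $T$ is unitary, invoke Proposition \ref{factores unitarios} to choose the decomposition $T = D + K$ with $D$ unitary; then $|d_n| = 1$ for every $n$. Repeating the computation above with this improved information,
\[
\|e_{\sigma(n_k)} - d_{n_k} e_{n_k}\|^2 = 1 + |d_{n_k}|^2 = 2,
\]
and the same limit argument delivers $\|U_\sigma - T\| \geq \sqrt{2}$.

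I do not anticipate any real obstacle here: the whole argument rests on the single observation that $e_{\sigma(n)} \perp e_n$ whenever $n$ is in the support of $\sigma$, together with the compactness of $K$ applied to the weakly null sequence $\{e_{n_k}\}$. The only mild point to be careful about is that the decomposition of $T$ is not unique, but for the first inequality any decomposition works, and for the second we are free to invoke the distinguished unitary-plus-compact decomposition from Proposition \ref{factores unitarios}.
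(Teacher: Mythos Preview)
Your proof is correct and follows essentially the same strategy as the paper: test $U_\sigma - T$ on basis vectors $e_n$ with $n$ large and $\sigma(n)\neq n$, exploit the orthogonality $e_{\sigma(n)}\perp e_n$, and use compactness of $K$ to make its contribution vanish. The only minor difference is technical: the paper compresses by $E_N^\perp$ on both sides and uses $\|E_N^\perp K E_N^\perp\|<\epsilon$ (which forces the extra requirement $\sigma(m)>N$), whereas you invoke directly that compact operators send weakly null sequences to norm-null sequences, which is slightly cleaner and avoids that extra bookkeeping.
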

\begin{proof}
Fix $\epsilon>0$. Let $E_N$ be the orthogonal projection onto the subspace generated by $e_1,\dots,e_N$. Then
$$
\|U_\sigma-T\|\ge\|E_N^\perp(U_\sigma-T)E_N^\perp\|=\|E_N^\perp(U_\sigma-D)E_N^\perp - E_N^\perp KE_N^\perp\|
$$
$$
\ge \|E_N^\perp(U_\sigma-D)E_N^\perp\|-\|E_N^\perp KE_N^\perp\|.
$$
Since $K$ is compact, we can pick $N$ large enough so that $\|E_N^\perp KE_N^\perp\|<\epsilon$. Then, for such $N$,
$$
\|U_\sigma-T\|\ge \|E_N^\perp(U_\sigma-D)E_N^\perp\|-\epsilon\ge \|E_N^\perp (U_\sigma-D)E_N^\perp e_m\|-\epsilon,
$$
for any  vector $e_m$ of the orthogonal basis. For the given $N$, we choose $m$ so that $m>N$ and  $m\ne\sigma(m)>N$. This is possible because $\sigma$ has infinite support. Then 
$$
E_N^\perp(U_\sigma-D)E_N^\perp e_m=e_{\sigma(m)}-d_me_m, 
$$
whose norm is $\sqrt{1+|d_m|^2}\ge 1$. Then $\|U_\sigma-T\|>1-\epsilon$, for any $\epsilon>0$. 

If $T$ is unitary in $\d+\k$, we can choose a decomposition $T=D+K$ with $|d_n|=1$ for all $n\in\mathbb{N}$. Then
$\|e_{\sigma(m)}-d_me_m\|=\sqrt2$. 
\end{proof}

\begin{teo}
\begin{equation}
\u_0(\h)=\bigcup_{[\sigma]\in\s(\mathbb{N}) / \s_f(\mathbb{N}) } U_\sigma \cdot\u_{\d+\k}.
\end{equation}
Thus, in the norm topology of $\b(\h)$,  $\u_0(\h)$ is a non countable discrete union of copies of $\u_{\d+\k}$. In particular, $\u_0(\h)$  is closed.
\end{teo}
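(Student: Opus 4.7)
The plan is to start from the decomposition (\ref{union}) already established, and refine it by showing that two cosets $U_{\sigma_1}\cdot\u_{\d+\k}$ and $U_{\sigma_2}\cdot\u_{\d+\k}$ coincide whenever $[\sigma_1]=[\sigma_2]$ in $\s(\mathbb{N})/\s_f(\mathbb{N})$, and are $\sqrt{2}$-apart otherwise. For the first implication, if $\sigma_1=\sigma_2\tau$ with $\tau\in\s_f(\mathbb{N})$, then $U_\tau$ differs from $I$ only on finitely many basis vectors, so $U_\tau\in\u_{\d+\k}$ (recall from the discussion preceding equation (\ref{tetas}) that $U_\sigma\in\d+\k$ exactly when $\sigma$ fixes a cofinite set). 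Hence $U_{\sigma_1}\cdot\u_{\d+\k}=U_{\sigma_2}U_\tau\cdot\u_{\d+\k}=U_{\sigma_2}\cdot\u_{\d+\k}$, so the union may be indexed by cosets.

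For the separation, I would take $V_1,V_2\in\u_{\d+\k}$ and $\sigma_1,\sigma_2$ with $[\sigma_1]\ne[\sigma_2]$, and compute, using unitarity of $U_{\sigma_2}$ and $V_1$,
$$
\|U_{\sigma_1}V_1-U_{\sigma_2}V_2\|=\|U_{\sigma_2^{-1}\sigma_1}V_1-V_2\|=\|U_{\sigma_2^{-1}\sigma_1}-V_2V_1^{-1}\|.
$$
The permutation $\tau:=\sigma_2^{-1}\sigma_1$ has infinite support (otherwise $[\sigma_1]=[\sigma_2]$), and $V_2V_1^{-1}\in\u_{\d+\k}$, so Lemma \ref{lema eduardo} gives $\|U_\tau-V_2V_1^{-1}\|\ge\sqrt{2}$. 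This simultaneously proves disjointness of distinct cosets and the $\sqrt{2}$-separation, establishing that $\u_0(\h)$ is a disjoint union of the stated form and that it is discrete in the coset topology.

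Uncountability is immediate since $|\s(\mathbb{N})|=2^{\aleph_0}$ while $|\s_f(\mathbb{N})|=\aleph_0$, giving $|\s(\mathbb{N})/\s_f(\mathbb{N})|=2^{\aleph_0}$. For closedness, each left translate $U_\sigma\cdot\u_{\d+\k}$ is closed in $\b(\h)$ (being the image of the norm-closed set $\u_{\d+\k}$ under the isometry of left multiplication by $U_\sigma$), and any norm-Cauchy sequence in $\u_0(\h)$ must eventually lie within a single coset by the uniform $\sqrt{2}$-lower bound, so its limit lies in that same closed coset and hence in $\u_0(\h)$.

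I do not anticipate any real obstacle; the crux is the norm manipulation above that packages the problem into a direct application of Lemma \ref{lema eduardo}. The only conceptual point to be careful about is that $\s_f(\mathbb{N})$ is not normal in $\s(\mathbb{N})$, but since we only need the set of left cosets to index a disjoint union, this plays no role.
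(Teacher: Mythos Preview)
Your proof is correct and essentially identical to the paper's: both reduce the coset indexing via $U_\tau\in\u_{\d+\k}$ for $\tau\in\s_f(\mathbb{N})$, then use unitary invariance of the norm to rewrite $\|U_{\sigma_1}V_1-U_{\sigma_2}V_2\|$ as $\|U_{\sigma_2^{-1}\sigma_1}-V_2V_1^*\|$ and invoke Lemma~\ref{lema eduardo}. One incidental correction to your closing remark: $\s_f(\mathbb{N})$ \emph{is} normal in $\s(\mathbb{N})$ (the conjugate of a finite-support permutation again has finite support), though as you note this plays no role in the argument.
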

\begin{proof}
Formula (\ref{union discreta}) states that in formula (\ref{union}), it suffices to choose one $\sigma$ in each   class of $\s(\mathbb{N}) / \s_f(\mathbb{N})$. This is clear, if $\sigma'=\sigma\gamma$, for some $\gamma\in\s_f(\mathbb{N})$, then $U_{\sigma'}=U_\sigma U_\gamma$. Clearly $U_\gamma\in\u_{\d+\k}$, and then $U_{\sigma'}\cdot \u_{\d+\k}=U_{\sigma}\cdot \u_{\d+\k}$. On the other hand, if $\sigma$ and $\sigma'$ belong to different classes, $U_{\sigma}^*U_{\sigma'}=U_{\sigma^{-1}\sigma'}\notin \u_{\d+\k}$, because $\sigma^{-1}\sigma'$ has infinite support.   Since $\s_f(\mathbb{N})$ is countable, the quotient $\s(\mathbb{N}) / \s_f(\mathbb{N})$ is uncountable. To end the proof, let us show that if $\sigma,\sigma'$ belong to different classes, then $\|U_\sigma V -U_{\sigma'} W\|\ge \sqrt2$, if $V, W\in \u_{\d+\k}$. Indeed
$$
\|U_\sigma V -U_{\sigma'} W\|=\|U_\sigma'(U_{\sigma'}^*U_\sigma-WV^*)V \|=\|U_{\sigma'^{-1}\sigma}-WV^*\|\ge \sqrt2,
$$
by Lemma \ref{lema eduardo}.
\end{proof}
Using Lemma \ref{lema eduardo}, we can also show the following:
\begin{prop}
Let $\sigma\in \s(\mathbb{N})$  of infinite support, and $U\in\u_{\d+\k}$. Then
$$
\|\theta_\sigma-Ad(U)\|\ge 2.
$$
\end{prop}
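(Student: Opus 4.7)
The plan is to reduce the claim to a rank-one projection estimate via a unitary conjugation, then exhibit a concrete test element in $\d+\k$.

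First I would observe that for any $T\in\d+\k$ with $\|T\|\le 1$, conjugating $U_\sigma TU_\sigma^*-UTU^*$ on the left by $U_\sigma^*$ and on the right by $U_\sigma$ (an operation that preserves the operator norm) yields
$$
\|\theta_\sigma(T)-Ad(U)(T)\|=\|T-VTV^*\|,
$$
where $V:=U_\sigma^*U\in\u(\h)$. So the goal becomes: find $T\in\d+\k$ of norm one such that $\|T-VTV^*\|$ is arbitrarily close to $2$.

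Next I would try the symmetry $T_m:=I-2E_m\in\d$, where $E_m$ is the rank-one diagonal projection onto $e_m$ and $m$ is to be chosen shortly. A direct computation gives $T_m-VT_mV^*=2(VE_mV^*-E_m)$, hence
$$
\|T_m-VT_mV^*\|=2\|E_m-VE_mV^*\|=2\sqrt{1-|\langle e_m,Ve_m\rangle|^2},
$$
using the standard formula for the norm of the difference of two rank-one projections (onto the unit vectors $e_m$ and $Ve_m$). The problem is thereby reduced to producing $m$'s along which $|\langle e_m,Ve_m\rangle|$ is arbitrarily small.

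For this I would invoke Proposition \ref{factores unitarios} and write $U=D+K$ with $D=D_\dd$ unitary diagonal and $K$ compact. Since $\sigma$ has infinite support, the set $\{m:\sigma(m)\ne m\}$ is infinite, and for every such $m$
$$
\langle Ve_m,e_m\rangle=\langle Ue_m,e_{\sigma(m)}\rangle=d_m\,\delta_{m,\sigma(m)}+\langle Ke_m,e_{\sigma(m)}\rangle=\langle Ke_m,e_{\sigma(m)}\rangle,
$$
which tends to $0$ because $K$ is compact and $e_m\to 0$ weakly. Given $\varepsilon>0$, choosing such an $m$ with $|\langle Ve_m,e_m\rangle|<\varepsilon$ yields $\|\theta_\sigma-Ad(U)\|\ge 2\sqrt{1-\varepsilon^2}$, and letting $\varepsilon\to 0$ gives the conclusion. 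The only step requiring a moment of thought is the conjugation trick in the first paragraph; the rest is essentially the same compactness argument that drives Lemma \ref{lema eduardo}.
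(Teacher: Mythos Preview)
Your proof is correct and takes a genuinely different route from the paper's. The paper first rewrites $\|\theta_\sigma-Ad(U)\|$ as the norm of the inner derivation $\delta_{U^*U_\sigma}$ acting on all of $\b(\h)$ (using that $\d+\k$ is strongly dense in $\b(\h)$ and that the norm is lower semicontinuous for the strong topology), then invokes Stampfli's formula $\|\delta_A\|=2\inf_{\lambda}\|A-\lambda I\|$ together with Lemma~\ref{lema eduardo} applied to $\lambda U\in\d+\k$ to conclude. Your argument is more elementary and self-contained: you test directly against the diagonal symmetries $T_m=I-2E_m$, reduce to the distance between two rank-one projections, and use only the compactness of $K$ (in the decomposition $U=D+K$ of Proposition~\ref{factores unitarios}) to force $\langle Ke_m,e_{\sigma(m)}\rangle\to 0$ along the infinite support of $\sigma$. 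What you gain is that no external theorem (Stampfli) and no density/semicontinuity step are needed, and the near-extremal test elements are explicit; what the paper's approach buys is an exact identification $\|\theta_\sigma-Ad(U)\|=2\inf_\lambda\|U_\sigma-\lambda U\|$, not merely a lower bound.
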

\begin{proof}
$$
\|\theta_\sigma-Ad(U)\|=\sup_{X\in\d+\k, \|X\|\le 1} \|U_\sigma XU_\sigma^*-UXU^*\|=\sup_{X\in\d+\k, \|X\|\le 1} \|U^*U_\sigma X-XU^*U_\sigma\|.
$$
Since $\d+\k$ is strongly dense in $\b(\h)$, it is clear that
$$
\sup_{X\in\d+\k, \|X\|\le 1} \|U^*U_\sigma X-XU^*U_\sigma\|=\sup_{X\in\b(\h), \|X\|\le 1} \|U^*U_\sigma X-XU^*U_\sigma\|.
$$
This, in turn, is the norm of the derivation $\delta_{U^*U_\sigma}$,   where $\delta_A(X)=XA-AX$. By a result by J.G. Stampfli \cite{stampfli}, $\|\delta_A\|=2 \inf_{\lambda\in\mathbb{C}} \|A-\lambda I\|$. Then
$$
\|\theta_\sigma-Ad(U)\|=2 \inf_{\lambda\in\mathbb{C}} \|U^*U_\sigma-\lambda I\|=2 \inf_{\lambda\in\mathbb{C}} \|U_\sigma-\lambda U\|.
$$
By Lemma \ref{lema eduardo}, $\|U_\sigma-\lambda U\|\ge 1$, and the proof follows.
\end{proof}
\begin{rem}
Therefore, as in  formula (\ref{union discreta}), we can describe the group of automorphisms generated by unitaries in $\u_0(\h)$ as a discrete union of copies of the group of inner automorphisms,
$$
\{Ad(U): U\in\u_0(\h)\}=\bigcup_{[\sigma]\in\s(\mathbb{N}) / \s_f(\mathbb{N}) } Ad(U_\sigma) \cdot\{Ad(V): V\in\u_{\d+\k}\}.
$$ 
If $\theta_1,\theta_2$ belong to different copies, then $\|\theta_1-\theta_2\|\ge 2$. Indeed, $\theta_1=Ad(U_{\sigma_1} U_1)$ and $\theta_2=AD(U_{\sigma_2}U_2)$, with $\sigma_2^{-1}\sigma_1$ of infinite support. Then
$$
\|Ad(U_{\sigma_1}U_1)-Ad(U_{\sigma_2}U_2)\|=\|Ad(U_{\sigma_2})\left(Ad(U_{\sigma_2^{-1}\sigma_1}-Ad(U_2U_1^*)\right)Ad(U_1)\|
$$
$$
=\|Ad(U_{\sigma_2^{-1}\sigma_1})-Ad(U_2U_1^*)\|\ge 2,
$$
by the above proposition. In particular, the group $\{Ad(U): U\in\u_0(\h)\}$ is open and closed in the group of all $*$-automorphisms of $\d+\k$, in the norm topology.
\end{rem}
\begin{rem}
The fact that the automorphisms of $\d+\k$ leave $\k$ invariant, implies that any automorphism $\theta$ of $\d+\k$ induces an automorphism $\bar{\theta}$ of the quotient $\ell^\infty/\ccc$. The automorphisms of $\ell^\infty/\ccc\simeq C(\beta\mathbb{N}\setminus \mathbb{N})$ are in one to one correspondence, by Gelfand's map, with the homeomorphisms of the residual set $\beta\mathbb{N}\setminus\mathbb{N}$. It is known, if one assumes the continuous hypothesis, that the set of homeomorphisms of  $\beta\mathbb{N}\setminus\mathbb{N}$ has cardinality $2^c$ (see for instance \cite{weaver}, Chapter 15)

On the other hand, since $\h$ is separable, $\b(\h)$, and thus $\u(\h)$, has cardinality $c$. Since all automorphisms of $\d+\k$ are implemented by unitaries, it follows that they have cardinality $c$. 

Therefore not every automorphism of $\d+\k /\k$ can be lifted to an automorphism of $\d+\k$. Clearly the former set is clearly much more complicated.
\end{rem}

\begin{rem}
There are plenty of  (non onto) $*$-endomorphisms. To the characters (onto the subalgebra $\mathbb{C}\cdot 1\subset \d+\k$), one can add the following. Let $F\subset\mathbb{N}$, and consider $\d_F\subset\d$ the subalgebra
$$
\d_F=\{D_\xx: x_i=x_j \hbox{ if } i,j\in F\}.
$$
$\d_F$ is the unitization of the ideal of $\d$ of diagonal matrices whose $F$-entries are zero. Recall the multiplicative functionals $\Psi_\m=\psi_\m\circ\pi:\k+\d\to \mathbb{C}$, where $\psi_\m$ is the multiplicative functional associated to the maximal ideal $\m$. Fix $\m_0$ and let $\{\m_j: j\in\mathbb{N}\setminus F\}$, where $\m_j\ne\m_0$ and $\m_j\ne\m_k$ if $k\ne j$, but otherwise arbitrary. Then the map
$$
\varphi:\d+\k\to\d\subset \d+\k \ , \ \ \varphi(T)=D_\dd \  , \ \hbox{ where } d_n=\left\{ \begin{array}{l} \Psi_{\m_0}(T) \hbox{ if } n\in F \\ \Psi_{\m_n} \hbox{ if } n\notin F \end{array} \right.  .
$$   
is clearly a $*$-endomorphism of $\d+\k$, whose image if $\d_F$. Note also that 
$$
\k\subset N(\varphi)=\cap_{n\in F} \m_n \cap \m_0.
$$
\end{rem}
\section{Projections}
Finite rank projections in $\b(\h)$ belong to $\d+\k$, as well as arbitrary projections in $\d$ (diagonal matrices with entries $0$ or $1$). 
\begin{lem}
Let $P=D+K\in\d+\k$ be an orthogonal projection, with $D^*=D=D_\dd$. Then the only possible accumulation points of  the sequence $\dd=\{d_n\}_{n\ge 1}$ are $0$ or $1$. 
\end{lem}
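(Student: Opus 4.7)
The plan is to push the projection condition $P^2 = P$ and $P^* = P$ down to the quotient $(\d+\k)/\k \simeq \ell^\infty/\cc_0$, where it becomes a condition on the sequence $\dd$ that immediately restricts its accumulation points.

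First I would note that since $P^* = P$, Remark \ref{remark preliminar}(7) gives $D^* = D$, so each $d_n$ is real. Then I would apply the quotient map $\pi: \d + \k \to (\d+\k)/\k$. Because $K \in \k$ lies in the kernel, $\pi(P) = \pi(D)$, and under the isomorphism $(\d+\k)/\k \simeq \ell^\infty/\cc_0$ this corresponds to the class $[\dd]$. The relation $P^2 = P$ then translates to $[\dd]^2 = [\dd]$ in $\ell^\infty/\cc_0$, i.e.\ the sequence $\{d_n^2 - d_n\}_{n \ge 1}$ belongs to $\cc_0$, or equivalently
\[
d_n(d_n - 1) \longrightarrow 0 \quad \text{as } n \to \infty.
\]

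Now suppose $\lambda \in \mathbb{C}$ is an accumulation point of $\dd$, so there is a subsequence $d_{n_k} \to \lambda$. Passing to the limit in $d_{n_k}(d_{n_k} - 1) \to 0$ yields $\lambda(\lambda - 1) = 0$, hence $\lambda \in \{0, 1\}$. (The realness of $d_n$ ensures $\lambda$ is real, though this is not even needed for the conclusion.)

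There is essentially no obstacle here: the argument is just the observation that $\pi(P)$ is a projection in a commutative $C^*$-algebra of sequences modulo $\cc_0$, and projections there correspond to sequences whose values accumulate only at $0$ and $1$. The only small point to be careful about is that $\pi$ is well-defined on $\d + \k$ and sends $D$ to the class $[\dd]$, which follows from the isomorphism spelled out in the remark preceding Corollary 4.2.
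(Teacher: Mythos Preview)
Your proof is correct and is essentially the same as the paper's: the paper expands $P=P^2$ to conclude directly that $D-D^2\in\k$, hence $d_n-d_n^2\to 0$, which is exactly your conclusion $[\dd]^2=[\dd]$ in $\ell^\infty/\cc_0$ phrased without invoking the quotient map.
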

\begin{proof}
Since $D+K=D^2+DK+KD+ K^2$, it follows that $D-D^2$ is a compact self-adjoint 
operator. Thus $\sigma(D-D^2)=\{d_n-d_n^2: n\ge 1\}$ is a sequence whose only 
possible accumulation point is  $0$.
\end{proof}
As a consequence, we have,
\begin{prop}
Let $P\in\d+\k$ be an orthogonal projection. Then there exists a decomposition  $P=E+K$, where $E\in\d$ is a (diagonal) projection.
\end{prop}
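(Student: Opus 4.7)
The plan is to leverage the preceding lemma and then round each diagonal entry $d_n$ to the nearer of $\{0,1\}$. First I would invoke Remark 2.2(7): since $P=P^*$, we can choose a decomposition $P=D+K$ with $D=D_\dd$ and $K$ both self-adjoint, so $\dd=\{d_n\}_{n\ge 1}$ is a bounded real sequence whose only possible accumulation points are $0$ and $1$.

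The key combinatorial step is to partition $\mathbb{N}=A\sqcup B\sqcup F$ via
$$
A=\{n:|d_n-1|<1/2\},\qquad B=\{n:|d_n|<1/2\},\qquad F=\mathbb{N}\setminus(A\cup B),
$$
and to verify that $F$ is finite. I would argue that otherwise, since $\dd$ is bounded, $\{d_n\}_{n\in F}$ would admit a convergent subsequence whose limit lies in the closed set $\{x\in\mathbb{R}:|x|\ge 1/2\text{ and }|x-1|\ge 1/2\}$, contradicting the lemma.

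Then I would let $E\in\d$ be the diagonal projection whose $n$-th entry equals $1$ on $A$ and $0$ on $B\cup F$. The same accumulation-point argument, applied now to the bounded subsequences $\{d_n\}_{n\in A}$ and $\{d_n\}_{n\in B}$, forces $d_n\to 1$ along $A$ and $d_n\to 0$ along $B$; combined with the finiteness of $F$, this shows that $D-E$ is a diagonal operator whose entries tend to zero, hence is compact. Setting $K'=(D-E)+K\in\k$ yields $P=E+K'$, the desired decomposition. I do not anticipate any real obstacle here: the spectral content is already done by the preceding lemma, and the remaining task is just to check that rounding to $\{0,1\}$ introduces only a compact perturbation.
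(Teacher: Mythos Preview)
Your proof is correct and follows essentially the same approach as the paper: both arguments round the diagonal entries $d_n$ to the nearer of $0$ or $1$ and use the preceding lemma to show that the resulting correction $D-E$ is a diagonal operator with entries tending to zero, hence compact. The only cosmetic difference is that the paper partitions $\mathbb{N}$ directly into two index sets $\{j_k\}$ and $\{i_k\}$ along which $d_n\to 0$ and $d_n\to 1$ (absorbing the finitely many ``middle'' indices into either set), whereas you single out the finite exceptional set $F$ explicitly.
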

\begin{proof}
Let $P=D_\dd+K'$, with $\dd=\{d_n\}_{n\ge 1}$. Then we can write  $\dd$ as the union of  two subsequences. Consider $\{d_{j_k}\}$ and $\{d_{i_k}\}$ with $d_{j_k}\to 0$, $d_{i_k}\to 1$ and $\mathbb{N}=\{j_k: k\ge 1\}\cup\{i_k: k\ge 1\}$ (if $0$ or $1$ do not occur as accumulation points, we omit the corresponding subsequence).  Let $\dd_0=\{d'_n\}_{n\ge 1}$ and $\dd_1=\{d''_n\}_{n\ge 1}$ given by
$$
d'_n=\left\{ \begin{array}{ll} d_{j_k} & \hbox{ if } n=j_k \\ 0 & \hbox{ if not} \end{array} \right.  \ , \ \ d''_n=\left\{ \begin{array}{ll} d_{i_k}-1 & \hbox{ if } n=i_k \\ 0 & \hbox{ if not} \end{array} \right.
$$
Clearly $D_{\dd_0}$ and $D_{\dd_1}$ are compact. Let $E=D_\tt$, where $\tt=\{t_n\}_{n\ge 1}$ is given by
$$
t_n=\left\{ \begin{array}{ll} 0 & \hbox{ if } n=j_k \\ 1 & \hbox{ if } n=i_k \end{array} \right. .
$$
Then $E$ is a projection in $\d$, and clearly
$$
P=E+K'+D_{\dd_0}+D_{\dd_1}.
$$
\end{proof}
The set $\pdk$ of  projections in $\d+\k$ consists of  three disjoint classes:
$$
\p^0_{\d+\k}=\{P \hbox{ has finite rank}\} \ , \ \ \p^1_{\d+\k}=\p_1=\{P \hbox{ has co-finite rank}\},
$$
and the complement of the union of these sets, the set $\p^\infty_{\d+\k}$. Note that the first two classes correspond with projections $P=D+K$ such that the spectrum of $D$ accumulates only at (respectively) $0$ or $1$, whereas in the class $\p^\infty_{\d+\k}$  the spectrum of $d$ accumulates both at $0$ and $1$. We shall focus on the description of this latter class.

It will be useful to recall the definition of the {\it restricted Grassmannian} \cite{sato},\cite{segalwilson}
(also called Sato Grassmannian)

Let $\h=\h_+\oplus \h_-$ an orthogonal decomposition of $\h$, with $\dim \h_+=\dim \h_-=\infty$. Denote by $E_+,E_-$ the orthogonal projections onto $\h_+,\h_-$, respectively. A projection $P$ belongs to the restricted Grassmannian $Gr_{res}(\h_+)$ with respect to the  subspace $\h_+$  if and only if
\begin{enumerate}
\item
$$
E_+P|_{R(P)}:R(P)\to \h_+\in\b(R(P),\h_+)
$$
is a Fredholm operator in $\b(R(P),\h_+)$, and
\item
$$
E_-P|_{R(P)}:R(P)\to \h_-\in\b(R(P),\h_-)
$$
is compact.
\end{enumerate}
The index of the first operator characterizes the connected components of $Gr_{res}(\h_+)$: two projections $P,Q$ belong to the same component of $Gr_{res}(\h_+)$ if and only if  $E_+P\in\b(R(P),\h_+)$ and $E_+Q\in\b(R(Q),\h_+)$ have the same Fredholm index.

\begin{rem}
Given $P\in Gr_{res}(\h_+)$, the Fredholm  index of  $E_+P\in\b(R(P),\h_+)$ was called  the index of the pair $(P,E_+)$ in \cite{ass}. There, among other properties, it was shown that it can be computed as
$$
index (P,E_+)=\dim(R(P)\cap N(E_+)) - \dim(N(P)\cap R(E_+)).
$$
\end{rem}
\begin{prop}
Let $P\in\d+\k$ be a  projection in the class $\p^\infty_{\d+\k}$, $P=E+K$ with $E$ a projection. Then $P\in Gr_{res}(R(E))$.
\end{prop}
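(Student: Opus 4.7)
The plan is to verify, directly from the decomposition $P=E+K$, the two conditions defining $Gr_{res}(R(E))$. Take $\h_+=R(E)$ and $\h_-=N(E)$. First I would check that both subspaces are infinite-dimensional: since $P\in\p^\infty_{\d+\k}$, the diagonal part of $P$ has spectrum accumulating at both $0$ and $1$, so in the preceding construction of $E$ both index subsequences used to build $E$ are infinite, giving $\dim R(E)=\dim N(E)=\infty$ as the definition of $Gr_{res}$ requires.

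The compactness of $E_-P|_{R(P)}$ is the easy half. Since $(I-E)E=0$, one has $(I-E)P=(I-E)(E+K)=(I-E)K\in\k$, and restriction to $R(P)$ preserves compactness.

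For the Fredholm condition, let $T=EP|_{R(P)}\colon R(P)\to R(E)$. A direct inner-product computation (using $E\xi=\xi$ for $\xi\in R(E)$ and $P\eta=\eta$ for $\eta\in R(P)$) identifies the Hilbert-space adjoint $T^*\colon R(E)\to R(P)$ with $PE|_{R(E)}$. Then
$$
TT^*=EPE|_{R(E)}=(E+EKE)|_{R(E)}=I_{R(E)}+EKE|_{R(E)},
$$
and, using $E=I-(I-E)$,
$$
T^*T=PEP|_{R(P)}=\bigl(P-P(I-E)P\bigr)|_{R(P)}=I_{R(P)}-P(I-E)P|_{R(P)}.
$$
Each of these is a compact perturbation of the identity on its respective Hilbert space (note $P(I-E)P$ is compact by the previous paragraph, and $EKE$ is compact as a product involving $K$), hence Fredholm by the Fredholm alternative. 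Since $\ker T=\ker T^*T$ and $\ker T^*=\ker TT^*$, while Fredholmness of $T^*T$ also forces $R(T)$ to be closed, the operator $T$ has finite-dimensional kernel and closed range of finite codimension in $R(E)$; that is, $T$ is Fredholm, which is condition (1).

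The only mildly delicate step is keeping track that $T^*$ is the adjoint between the two distinct restricted subspaces $R(P)$ and $R(E)$, so the formulas $TT^*=EPE|_{R(E)}$ and $T^*T=PEP|_{R(P)}$ must be interpreted on the correct space before invoking the Fredholm alternative; everything else is pure algebra driven by $P=E+K$.
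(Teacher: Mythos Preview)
Your argument is correct. Compactness of $(I-E)P|_{R(P)}$ follows at once from $(I-E)P=(I-E)K$, and for the Fredholm condition your computation $T^*T=I_{R(P)}-P(I-E)P|_{R(P)}$, $TT^*=I_{R(E)}+EKE|_{R(E)}$ exhibits $T^*$ as a two-sided inverse of $T$ modulo compacts, so $T$ is Fredholm by Atkinson's theorem. Your remark that $T^*T$ Fredholm forces $R(T)$ closed is also correct (via the polar decomposition, $|T|$ is then Fredholm and self-adjoint, so has closed range, and the partial isometry carries this to $R(T)$).

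The paper argues differently: instead of computing $T^*T$ and $TT^*$, it identifies the kernel of $T$ with $R(P)\cap N(E)=N(K-I)$ and the orthogonal complement of $R(T)$ in $R(E)$ with $N(P)\cap R(E)=N(K+I)$, using the standard fact that the $\pm1$-eigenspaces of a difference of projections $P-E$ are exactly these intersections. Since $K$ is compact, both eigenspaces are finite-dimensional. This route has the payoff of making the formula $index(P,E)=\dim N(K-I)-\dim N(K+I)$, used in the remark that follows, immediately visible; on the other hand it leaves the closedness of $R(T)$ and the compactness condition~(2) implicit. Your parametrix approach is more self-contained on those points and checks all the hypotheses of the definition explicitly, at the cost of not isolating the eigenspace description of the kernel and cokernel.
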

\begin{proof}
The null space of $EP|_{R(P)}$ is $R(P)\cap N(E)$, which, after elementary 
computations,  coincides with $N(P-E-I)=N(K-I)$, which is finite dimensional. 
The orthogonal complement of the range  of $EP|_{R(P)}$ is $N(P)\cap R(E)$, 
which coincides with $N(P-E+I)=N(K+I)$, which is also finite dimensional.
\end{proof}
\begin{rem}
Note  that in the above proposition, $index(P,E)=\dim N(K-I)-\dim N(K+I)$. The compact operator $K=P-E$, being a difference of projections, satisfies that $\sigma(K)\setminus \{-1,1\}$ is symmetric with respect to the origin, with $\lambda$ having the same (finite) multiplicity as $-\lambda$ ($|\lambda|<1$). However, there is no restriction for the multiplicities of $\pm1$, other than finiteness. Thus one can find examples where any value of the index can occur.
\end{rem}
It is clear that if $P\in\p^\infty_{\d+\k}$, there are infinitely many ways to 
decompose $P=E+K$: one can subtract from $E$ any finite number of $1$´s, and 
add the corresponding finite rank projection to $K$. Also it is clear, for 
instance playing with both $P$ and $E$ diagonal, that the index is not 
conserved for different decompositions of the same projection. 

Conversely, if $P=E+K=F+K'$ with $E,F$ diagonal projections, then $E-F$ is compact, which means that the subspace where they differ, i.e. $ R(E)\cap N(F)\oplus N(E)\cap R(F)$, must have finite dimension (indeed, in this subspace $E-F$ is a compact  symmetry). 

Nevertheless the index does in fact play a key role in determining the connected components of $\p^\infty_{\d+\k}$.
Let us recall the following property from \cite{ass}, which we state as a lemma, restricting the hypothesis to our current problem: namely for pairs of projections with compact difference. Recall that such pairs of projections have finite index.
\begin{lem} {\rm (\cite{ass})} 
Let $(P,Q)$ and $(Q,R)$ be two pairs of projections  with $P-Q, Q-R\in\k$. Then $P-R\in\k$ and
$$
index(P,R)=index(P,Q)+index(Q,R).
$$
\end{lem}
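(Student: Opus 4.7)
The plan is to reduce the index additivity to the standard multiplicativity of the Fredholm index under composition, after observing that all three operators whose indices appear are Fredholm.

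The first assertion is immediate: $P-R=(P-Q)+(Q-R)$ is a sum of two compact operators.

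For the index equality, recall that by the definition used in this paper (and extracted from the cited remark), whenever $A$ and $B$ are projections with $A-B\in\k$, the integer $\mathrm{index}(A,B)$ is the Fredholm index of
$$
BA|_{R(A)}:R(A)\longrightarrow R(B),
$$
an operator which is indeed Fredholm because a routine computation gives $BAB|_{R(B)} = I_{R(B)} - B(B-A)B|_{R(B)}$ and $ABA|_{R(A)} = I_{R(A)} - A(A-B)A|_{R(A)}$, exhibiting parametrices on both sides. Applying this to our three pairs, the operators $QP|_{R(P)}:R(P)\to R(Q)$, $RQ|_{R(Q)}:R(Q)\to R(R)$ and $RP|_{R(P)}:R(P)\to R(R)$ are all Fredholm.

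The key step is to consider the composition
$$
(RQ|_{R(Q)})\circ(QP|_{R(P)})=RQ^2P|_{R(P)}=RQP|_{R(P)}:R(P)\to R(R),
$$
whose index, by the multiplicativity of the Fredholm index, equals $\mathrm{index}(P,Q)+\mathrm{index}(Q,R)$. It therefore suffices to show that $RQP|_{R(P)}$ and $RP|_{R(P)}$ differ by a compact operator, so that they have the same Fredholm index. Their difference is $-R(I-Q)P|_{R(P)}$, and compactness follows from
$$
(I-Q)P=(I-Q)(P-Q)+(I-Q)Q=(I-Q)(P-Q),
$$
which is compact by hypothesis. Hence $\mathrm{index}(P,R)=\mathrm{ind}(RP|_{R(P)})=\mathrm{ind}(RQP|_{R(P)})=\mathrm{index}(P,Q)+\mathrm{index}(Q,R)$.

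The only mild obstacle is bookkeeping: being careful that each operator is regarded as acting between the correct range subspaces so that the Fredholm composition rule actually applies, and verifying the parametrix identities above to justify the Fredholm property from the start. Once the domains are fixed, the argument is essentially formal.
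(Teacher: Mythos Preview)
Your proof is correct. The paper itself does not prove this lemma: it is stated as a recollection from \cite{ass} and used without argument. Your reduction to the multiplicativity of the Fredholm index via the composition $RQP|_{R(P)}=(RQ|_{R(Q)})\circ(QP|_{R(P)})$, together with the observation that $RP-RQP=R(I-Q)(P-Q)$ is compact, is exactly the standard proof of this fact (and essentially the argument in the cited reference). The parametrix verification you give for the Fredholm property of $BA|_{R(A)}$ is also correct.
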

\begin{teo}\label{teo 1}
Let $P\in\pdk$. Then there exists a diagonal projection $E_0$ such that $P-E_0\in\k$ and $index(P,E_0)=0$. Moreover, 
there exists a unitary  operator $U$ such that $U-I\in\k$ (i.e., $U=e^{iX}$ with $\|X\|\le\pi$, $X\in\k$; in particular, $U\in\u_{\d+\k}$), which satisfies
$$
UE_0U^*=P.
$$
\end{teo}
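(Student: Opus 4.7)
The plan is to split the statement into two tasks: first, produce a diagonal projection $E_0$ with $P-E_0\in\k$ and $index(P,E_0)=0$; then, construct a unitary $U=e^{iX}$ with $X^*=X$ compact that conjugates $E_0$ onto $P$.

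For the first task I would apply the preceding proposition to write $P=E+K$ with $E$ a diagonal projection and $K\in\k$, and set $d=index(P,E)=\dim N(K-I)-\dim N(K+I)$. If $d=0$ take $E_0=E$. If $d>0$, observe that a unit vector $v$ with $Kv=v$ cannot lie in $R(E)$ (otherwise $Pv=2v$, which is impossible for a projection), hence $\dim N(E)\ge\dim N(K-I)\ge d$; so $E$ has at least $d$ zero diagonal entries, and if $\tilde F$ denotes the diagonal projection onto any $d$ of these coordinates, a direct computation gives $index(E,E+\tilde F)=-d$. By the preceding additivity lemma, $E_0:=E+\tilde F$ then satisfies $index(P,E_0)=d+(-d)=0$. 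The case $d<0$ is symmetric: one shows $\dim R(E)\ge|d|$ by the dual argument applied to $N(K+I)$, and subtracts a rank-$|d|$ diagonal projection supported on the $1$-entries of $E$.

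For the second task I would use the Halmos-type intertwiner $V:=PE_0+(I-P)(I-E_0)$. A direct calculation gives $VE_0=PE_0=PV$, and $\pi(V)=\pi(P)^2+(I-\pi(P))^2=I$ in the Calkin algebra, hence $V-I\in\k$. The operator $V$ is generally not invertible: its kernel $N(V)=(R(E_0)\cap N(P))\oplus(N(E_0)\cap R(P))$ and cokernel $N(V^*)=(R(P)\cap N(E_0))\oplus(N(P)\cap R(E_0))$ are finite dimensional, and the condition $index(P,E_0)=0$ guarantees that the corresponding summands match in dimension. I would then pick a finite-rank partial isometry $C$, vanishing on $N(V)^\perp$, mapping $R(E_0)\cap N(P)$ unitarily onto $R(P)\cap N(E_0)$ and $N(E_0)\cap R(P)$ unitarily onto $N(P)\cap R(E_0)$. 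Such a $C$ also intertwines $E_0$ and $P$, and since $V^*C=0=C^*V$ (the range of $C$ lies in $N(V^*)$), one gets $(V+C)^*(V+C)=V^*V+\mathrm{Proj}_{N(V)}$, which is $I$ plus a compact operator and is bounded below, so $V+C$ is invertible and $(V+C)E_0=P(V+C)$. The intertwining forces $E_0$ to commute with $(V+C)^*(V+C)$, hence with $|V+C|$, so the unitary $U:=(V+C)|V+C|^{-1}$ from the polar decomposition satisfies $UE_0=PU$, that is $UE_0U^*=P$; continuous functional calculus on $(V+C)^*(V+C)=I+$compact yields $|V+C|-I\in\k$, and hence $U-I\in\k$.

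Finally, to realize $U$ as an exponential: since $U-I\in\k$ one has $\sigma_{ess}(U)\subset\{1\}$, so the spectrum of $U$ consists of $1$ together with isolated eigenvalues $\lambda_n\to 1$ of finite multiplicity. Fixing the branch of the logarithm on the unit circle with values in $i[-\pi,\pi]$ and $\log 1=0$, Borel functional calculus produces $X:=-i\log U$ with $X^*=X$, $\|X\|\le\pi$, and $e^{iX}=U$; the eigenvalues $-i\log\lambda_n$ accumulate only at $0$, so $X$ is compact. The main obstacle is the second task: the Halmos intertwiner $V$ is natural but fails to be invertible, and repairing its Fredholm defect by a partial isometry $C$ in such a way that the intertwining relation survives and the identity-plus-compact structure is preserved is the delicate point, and this is precisely where the hypothesis $index(P,E_0)=0$ enters in an essential way.
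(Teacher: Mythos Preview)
Your proof is correct. The first step (producing $E_0$) is essentially the paper's argument, with the extra care of checking that $E$ has enough diagonal entries of the required type; your ``hence $\dim N(E)\ge\dim N(K-I)$'' is justified because $N(K-I)$ is finite-dimensional, so $I-E$ is injective on it once $N(K-I)\cap R(E)=\{0\}$ (in fact $N(K-I)=R(P)\cap N(E)\subset N(E)$ directly, which is perhaps the cleaner way to say it).

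The second step is where you genuinely diverge from the paper. The paper splits $\h$ along the spectral subspace $N(K_0^2-I)$ of $K_0=P-E_0$: on its orthogonal complement one has $\|P|-E_0|\|<1$, and a result quoted from the restricted-Grassmannian literature supplies a compact self-adjoint $X_0$ with $\|X_0\|<\pi/2$ conjugating the reduced projections; on the finite-dimensional piece $N(K_0^2-I)=N(K_0-I)\oplus N(K_0+I)$ the index-zero hypothesis gives equal dimensions, and an explicit $X_1$ of norm $\pi/2$ is written down. This yields the sharper bound $\|X\|\le\pi/2$. Your route via the Sz.-Nagy--Kato intertwiner $V=PE_0+(I-P)(I-E_0)$, repaired on its (finite-dimensional) kernel by a partial isometry $C$ swapping $R(E_0)\cap N(P)$ with $R(P)\cap N(E_0)$, then polar-decomposed and finally exponentiated via a Borel logarithm, is entirely self-contained and makes the role of the hypothesis $index(P,E_0)=0$ very explicit; the price is only the weaker bound $\|X\|\le\pi$ stated in the theorem. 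Both approaches are valid: the paper's gives a slightly stronger conclusion and ties in with the geodesic picture used later, yours is more elementary and avoids the external citation.
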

\begin{proof}
Suppose first that $P\in\p^\infty_{\d+\k}$. Let $E$ be a diagonal projection such that $P-E$ is compact. Let $m=index(P,E)$. Clearly, there exists a diagonal  projection $E_0$ such that $E-E_0$ has finite rank and $ind(E,E_0)=-m$. Then, by the above lemma,
$$
index(P,E_0)=index(P,E)+index(E,E_0)=0.
$$
Denote by $K_0=P-E_0\in\k$. 
Consider the decomposition of $\h=\h_0\oplus\h_1$, where $\h_0=N(K_0^2-1)^\perp$ and $\h_1=N(K_0^2-1)$, which reduces both $P$ and $E_0$. In $\h_0$, $\pm 1\notin \sigma(K_0|_{\h_0})$, thus, $\|P|_{\h_0}-E|_{\h_0}\|<1$. 
It is well known that two projections in a C$^*$-algebra at distance less than one are conjugate by a unitary operator in the algebra (even more \cite{pr}: projections at distance less than one are conjugated by the exponential map of the manifold of projections).  In \cite{sato} it was shown that in the case of the restricted Grassmannian,  the argument at the exponential is compact. Namely, there exists $X_0^*=X_0\in\k(\h_0)$ with $ \|X_0\|<\pi/2$ such that $P|_{\h_0}=e^{iX_0}E_0|_{\h_0}e^{-iX_0}$.

In $\h_1$, $P|_{\h_1}=1\oplus 0$ and $E_0|_{\h_1}=0\oplus 1$, in the decomposition $\h_1=N(K_0-I)\oplus N(K_0+I)$. Note that  $index(P,E_0)=0$ means that both  subspaces have the same  (finite) dimension. Then it is clear that there exists $X_1^*=X_1$, with norm $\|X_1\|=\pi/2$, acting in $\h_1$, such that $e^{iX_1}E_0|_{\h_1}e^{-iX_1}=P|_{\h_1}$. For instance, pick any unitary isomorphism $V:N(K+I)\to N(K-I)$, and put $X_1=i\frac{\pi}{2}(V-V^*)$.  Then 
$$
X=X_0\oplus X_1
$$
is  compact, self-adjoint, $\|X\|\le \pi/2$, and satisfies 
$e^{iX}E_0e^{-iX}=P$. The proof in the case  $P\in\p^0_{\d+\k}$ is 
straightforward. Note that $P\in\p^1_{\d+\k}$ if and only if 
$1-P\in\p^0_{\d+\k}$, which deals with the remaining case.
\end{proof}
\begin{rem}
The projection $E_0$  of the above result is clearly non unique: given $E$ there are infinitely many diagonal projections $F$ with $E-F$ of finite rank and
$$
-m=index(E,F)=\#\{n\in\mathbb{N}: E_{n,n}=1 \hbox{ and } F_{n,n}=0\} - \# \{n\in\mathbb{N}: E_{n,n}=0 \hbox{ and } F_{n,n}=1\}.
$$
Then $P$ is conjugated with many different diagonal projections by means of the exponential map. The different diagonal projections $E_0, F_0$ with $index(P,E_0)=index(P,F_0)=0$, satisfy (by means of the same lemma above)
$$
index(E_0,F_0)=0.
$$
Thus, again using the facts cited from \cite{sato}, they are also conjugate via the exponential map of $\pdk$: there exists $Y=Y^*\in\k$ with $\|Y\|\le\pi$ such that $e^{iY}E_0e^{-iY}=F_0$. 
\end{rem}
The following result will be useful to characterize the connected components of $\p^\infty_{\d+\k}$.
\begin{lem}
Let $E$, $F$ be diagonal projections such that $E-F$ is non compact, or $E-F$ is compact but $index(E,F)\ne 0$. Then $E$ and $F$ are not conjugate by unitaries in $\u_{\d+\k}$. 
\end{lem}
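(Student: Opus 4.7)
The plan is to argue by contradiction. Suppose there exists $U\in\u_{\d+\k}$ with $UEU^*=F$. The starting point is the factorization from Proposition \ref{factores unitarios}: write $U=De^{iX}$ with $D$ a unitary diagonal operator and $X=X^*\in\k$, $\|X\|\le\pi$. The point is that conjugation by a diagonal unitary acts trivially on diagonal projections, so only the compact exponential factor matters.

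Concretely, since $D$ and $E$ are both diagonal they commute, whence $DED^*=E$ and
\[
F-E \;=\; UEU^*-E \;=\; D\bigl(e^{iX}Ee^{-iX}-E\bigr)D^*.
\]
I would then show that $e^{iX}Ee^{-iX}-E$ is compact, using the identity $e^{iX}Ee^{-iX}-E=i\int_0^1 e^{isX}[X,E]e^{-isX}\,ds$ and the fact that $[X,E]\in\k$ because $X\in\k$. This immediately contradicts the hypothesis that $E-F$ is non-compact, disposing of the first case of the statement.

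For the remaining case ($E-F\in\k$ but $\mathrm{index}(E,F)\neq 0$), I would promote the previous argument to a homotopy. Define $U(t)=De^{itX}\in\u_{\d+\k}$ for $t\in[0,1]$. Then $P(t):=U(t)EU(t)^*$ is a norm-continuous path of projections with $P(0)=DED^*=E$ and $P(1)=F$, and the same calculation shows $P(t)-E\in\k$ for every $t$. Thus $\mathrm{index}(P(t),E)$ is defined throughout, and by the stability of the Fredholm index under norm-continuous perturbations (applied to $EP(t)|_{R(P(t))}$) it is locally constant in $t$. Therefore $\mathrm{index}(F,E)=\mathrm{index}(E,E)=0$, and antisymmetry of the index (obtained by applying the additivity lemma cited from \cite{ass} with $R=E$, giving $0=\mathrm{index}(P,P)=\mathrm{index}(P,Q)+\mathrm{index}(Q,P)$) yields $\mathrm{index}(E,F)=0$, the desired contradiction.

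The one step that requires a little care is the continuity of the index along the path $P(t)$. The homotopy $t\mapsto P(t)$ is clearly norm-continuous, and compactness of $P(t)-E$ for all $t$ is what makes the index available; once those two facts are in hand, local constancy is the usual stability of Fredholm index. Apart from this, everything reduces to the two elementary observations that diagonal operators commute and that $[X,E]\in\k$ whenever $X\in\k$.
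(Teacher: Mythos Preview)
Your argument is correct and follows the same route as the paper: factorize $U=De^{iX}$, observe that the diagonal factor $D$ commutes with $E$ and hence drops out, deduce compactness of $F-E$ from compactness of $X$, and then run the homotopy $t\mapsto e^{itX}Ee^{-itX}$ to force the index to vanish. The only point where the paper is more explicit is precisely the step you flagged: to make local constancy of the index rigorous despite the moving domain $R(P(t))$, the paper composes with the unitary $e^{-itX}$ to convert $E(t)E|_{R(E)}:R(E)\to R(E(t))$ into the norm-continuous family $Ee^{-itX}E|_{R(E)}$ of Fredholm operators on the \emph{fixed} space $R(E)$.
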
 
\begin{proof}
As seen above, unitaries $U$ in $\d+\k$ can be written $U=De^{iX}$, with $D\in\d$ unitary and $X=X^*$ compact. Therefore, if $E$ and $F$ are conjugate in $\d+\k$, then $F=UEU^*=e^{iX}Ee^{-iX}$. Since $e^{iX}=I+K$ for some $K\in\k$, it follows that 
$$
E-F=E-(I+K)E(I+K^*)=-KE-EK^*-KEK^*\in\k,
$$
which rules out the case $E-F$ non compact. Consider the path $E(t)=e^{itX}Ee^{-itX}$. Clearly, by the same argument above, $E-E(t)\in\k$ for all $t$. Thus $index(E,E(t))$ is defined. We claim that it is constant. Recall that $index(E,E(t))$ can be computed  as the Fredholm index of the operator
$$
E(t)E|_{R(E)}:R(E)\to R(E(t)).
$$
Since $E(t)=e^{itX}Ee^{-itX}$ and $R(E(t))=e^{itX}(R(E))$, the index of this operator coincides with the index of 
$$
Ee^{-itX}E|_{R(E)}:R(E)\to R(E).
$$
Since the index of a Fredholm operator (in a fixed Banach space) is locally constant, \\ $index(E,E(t))$ is constant. In particular, at $t=0$ and $t=1$,
$$
0=index(E,E)=index(E,F).
$$
\end{proof}
\begin{coro}
The connected components of $\p^\infty_{\d+\k}$ coincide with the orbits of the exponential  subgroup 
$$
\u_\infty(\h)=\{e^{iX}: X=X^*\in\k\}.
$$
They are parametrized by the set of diagonal projections (with infinite $0$'s and infinite $1$'s), modulo the index: $E$ and $E'$ define the same component if and only if $index(E,E')=0$. In particular, $\pdk$ has uncountable many connected components.
\end{coro}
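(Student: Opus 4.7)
The proof plan is to combine Theorem~\ref{teo 1} (every projection is $\u_\infty(\h)$-conjugate to a diagonal projection with vanishing index) with the preceding Lemma (a non-zero index or a non-compact difference obstructs such a conjugation), together with the elementary observation that the set of projections in a commutative C$^*$-algebra is norm-discrete.

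First I would observe that each $\u_\infty(\h)$-orbit is path-connected via the continuous path $t\mapsto e^{itX}Pe^{-itX}$, so orbits are contained in connected components of $\pdk$. By Theorem~\ref{teo 1}, every $P\in\p^\infty_{\d+\k}$ lies in the $\u_\infty(\h)$-orbit of some diagonal projection, hence the classification reduces to the diagonal case. For diagonal $E,E'\in\p^\infty_{\d+\k}$, the orbit relation is equivalent to $E-E'\in\k$ together with $index(E,E')=0$: the ``only if'' direction is precisely the preceding Lemma, and the ``if'' direction follows by applying the construction in the proof of Theorem~\ref{teo 1} with $P=E'$ and $E_0=E$, whose only hypotheses are these two conditions.

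The main step is to identify orbits with components in this diagonal case, i.e.\ to show that if $E$ and $E'$ lie in the same connected component then necessarily $E-E'\in\k$ and $index(E,E')=0$. Suppose $P(t)$ is a norm-continuous path in $\pdk$ from $E$ to $E'$. Since $(\d+\k)/\k\simeq \ell^\infty/\ccc$ is a commutative C$^*$-algebra, its set of projections is norm-discrete: any two distinct projections in $C(X)$ are characteristic functions of distinct clopen sets and hence at distance exactly $1$. Thus the continuous map $t\mapsto \pi(P(t))$ is constant, so $P(t)-E\in\k$ for every $t$, and in particular $E-E'\in\k$. Moreover, $t\mapsto index(P(t),E)$ is defined throughout and is locally constant by the same Fredholm-index argument used in the proof of the preceding Lemma; hence it is constant and equals $index(E,E)=0$. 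Combined with the previous step, connected components coincide with $\u_\infty(\h)$-orbits and are parametrized as claimed.

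For the uncountability, it suffices to exhibit an uncountable almost-disjoint family $\{S_\alpha\}$ of infinite, co-infinite subsets of $\mathbb{N}$ (e.g.\ via branches of the binary tree), so that $S_\alpha\triangle S_\beta$ is infinite whenever $\alpha\ne\beta$; the associated diagonal projections then have pairwise non-compact differences and lie in pairwise distinct components. The main obstacle I expect is the passage from ``same component'' to ``same orbit'', whose resolution hinges on the twin observations that projections in the commutative quotient $(\d+\k)/\k$ are norm-discrete and that the index of a pair of projections with compact difference is locally constant under continuous deformations.
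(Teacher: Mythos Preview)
Your proof is correct but takes a different route from the paper, which states that ``only the last assertion needs a proof'' and leaves the identification of orbits with components as an immediate consequence of Theorem~\ref{teo 1} and the preceding Lemma (implicitly using the standard fact that projections in the same connected component of the projection space of a unital C$^*$-algebra are unitarily equivalent, so that the contrapositive of the Lemma applies). You instead pass to the commutative quotient $(\d+\k)/\k$, use the discreteness of its projections to force $P(t)-E\in\k$ along any connecting path, and then invoke local constancy of the index directly; this is self-contained and bypasses the Lemma, though you tacitly use that connected components of projections coincide with path components (a standard consequence of the fact that projections at distance $<1$ are joined by a canonical arc), and your phrase ``by the same Fredholm-index argument'' hides a small wrinkle, since in the Lemma the domain $R(E)$ is fixed whereas along your path $R(P(t))$ varies, so one should first straighten the domain via a local unitary before invoking constancy of the Fredholm index. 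For the uncountability, the paper fixes a diagonal $E_0$ and shows that the set of diagonal $E'$ with $index(E',E_0)=0$ is countable (any such $E'$ differs from $E_0$ only on a finite set of entries), hence the uncountable set of diagonal projections with infinite rank and corank must split into uncountably many classes; your almost-disjoint family is an equally valid and more explicit alternative, producing uncountably many diagonal projections whose pairwise differences are not even compact.
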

\begin{proof}
Only the last assertion needs a proof. Clearly, there are uncountable many diagonal projections with infinite $0$'s and infinite $1$'s. The set of diagonal projections $E'$ which have zero index with respect to a fixed projection $E_0$ is countable. Indeed, for $k\in\mathbb{N}\cup \{0\}$ let 
$$
\e_k=\{E': index(E',E_0)=0 \hbox{ and } \dim(R(E')\cap N(E_0))=k\}.
$$
Clearly the set $\e_k$ is countable: it has the same  cardinality as the set of the subsets of $A\subset\mathbb{N}$ with $\# A=k$, which is countable.  Also it is clear that
$$
\{E': index(E',E_0)=0\}=\cup_{k=0}^\infty \e_k.
$$
\end{proof}

Let us make a brief recollection of the basic facts of the metric geometry of the space of projections in $\b(\h)$. Our references for these facts are \cite{pr} and \cite{pemenoscu}.
\begin{rem}
The distance between two projections $P,Q\in\b(\h)$, can be computed as
$$
d(P,Q)=\inf\{\ell(\gamma): \gamma \hbox{ is continuous and piecewise differentiable, and joins } P \hbox{ and } Q\},
$$
where it is implicit that $\gamma(t)$ consists of projections. If $\gamma$ is parametrized in the interval $I$, the length $\ell(\gamma)$ is computed 
$$
\ell(\gamma)=\int_I \|\dot{\gamma}(t)\| dt.
$$
Pairs of projections which can be joined by a minimal curve have been 
characterized: there exists a curve of minimal length if and only if  
$index(P,Q)=0$. In that case, there exists a self-adjoint operator $X$ of norm 
$\|X\|\le \pi/2$, satisfying that $X(R(P))\subset N(P)$ (and therefore also 
$X(N(P))\subset R(P)$),  with  the same condition for $Q$; we shall abbreviate 
these conditions by saying that $X$ is $P$ and $Q$ co-diagonal. This operator 
$X$ satisfies that 
$$
e^{iX}Pe^{-iX}=Q.
$$
The minimal curve $\delta$ joining $P$ and $Q$ is $\delta(t)=e^{itX}Pe^{-itX}$, its length is
$$
\ell(\delta)=d(P,Q)=\|X\|=\sin^{-1}(\|P-Q\|).
$$
\end{rem}
These facts and the proof of Theorem \ref{teo 1} imply the following:
\begin{coro}
Let $P,Q$ be projections in $\d+\k$, in the same connected component. Then there exists  $X^*=X$ with the following properties
\begin{enumerate}
\item
$X$ is  compact, with $\|X\|\le \pi/2$.
\item
The curve $\delta(t)=e^{itX}Pe^{-itX}$, joins  $\delta(0)=P$ and $\delta(1)=Q$ in $\p_{\d+\k}$.
\item
$\delta$ has minimal length among all piecewise differentiable curves of projections of $\b(\h)$ which join $P$ and $Q$. In particular, it has minimal length among curves of projections in $\d+\k$.
\end{enumerate}
\end{coro}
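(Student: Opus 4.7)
The plan is to reduce the statement to Theorem \ref{teo 1} together with the general facts on the metric geometry of projections of $\b(\h)$ recalled in the preceding remark. The new information is that the minimising generator can be chosen \emph{compact}, and this is exactly what the constructions behind Theorem \ref{teo 1} (taken from \cite{sato}) provide.

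\textbf{Step 1: reduce to $P-Q\in\k$ and $\mathrm{index}(P,Q)=0$.} If both projections lie in $\p^0_{\d+\k}$ or both in $\p^1_{\d+\k}$, being in the same connected component means having the same (co)rank, so $P-Q$ is of finite rank with $\mathrm{index}(P,Q)=0$. If $P,Q\in\p^\infty_{\d+\k}$, the corollary preceding the remark says they share a common diagonal projection $E_0$ with $P-E_0,Q-E_0\in\k$ and $\mathrm{index}(P,E_0)=\mathrm{index}(Q,E_0)=0$. Additivity of the index (the lemma cited from \cite{ass}) then gives $\mathrm{index}(P,Q)=\mathrm{index}(P,E_0)+\mathrm{index}(E_0,Q)=0$, while $P-Q=(P-E_0)-(Q-E_0)\in\k$.

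\textbf{Step 2: produce a compact self-adjoint $X$.} I would run the construction in the proof of Theorem \ref{teo 1} with $E_0$ replaced by $Q$. Put $K_0=P-Q\in\k$ and split $\h=\h_0\oplus\h_1$ with $\h_1=N(K_0^2-I)$ finite dimensional; this decomposition reduces both $P$ and $Q$. On $\h_0$ one has $\pm1\notin\sigma(K_0|_{\h_0})$, hence $\|P|_{\h_0}-Q|_{\h_0}\|<1$, and the argument of \cite{sato} provides a compact self-adjoint $X_0\in\k(\h_0)$ with $\|X_0\|<\pi/2$, co-diagonal with respect to both projections, such that $e^{iX_0}P|_{\h_0}e^{-iX_0}=Q|_{\h_0}$. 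On $\h_1=N(K_0-I)\oplus N(K_0+I)$ the condition $\mathrm{index}(P,Q)=0$ guarantees that the two summands have the same finite dimension; picking a unitary isomorphism $V$ between them and putting $X_1=i\tfrac{\pi}{2}(V-V^*)$ gives a self-adjoint operator of norm $\pi/2$ on $\h_1$, co-diagonal with respect to $P|_{\h_1}$ and $Q|_{\h_1}$, with $e^{iX_1}P|_{\h_1}e^{-iX_1}=Q|_{\h_1}$. The operator $X:=X_0\oplus X_1$ is then compact, self-adjoint, of norm $\|X\|\le\pi/2$, and $e^{iX}Pe^{-iX}=Q$, which establishes items 1 and 2.

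\textbf{Step 3: minimality.} By construction $X$ is $P$ and $Q$ co-diagonal and $\|X\|\le\pi/2$. The general result recalled in the remark immediately above (the Porta--Recht characterisation of minimal curves in the projection manifold of $\b(\h)$) says exactly that the path $\delta(t)=e^{itX}Pe^{-itX}$ realises the geodesic distance $d(P,Q)=\sin^{-1}\|P-Q\|=\|X\|$ among all piecewise differentiable curves of projections in $\b(\h)$ joining $P$ and $Q$. A fortiori $\delta$ is minimal among such curves living in $\p_{\d+\k}$, which proves item 3. The only subtle point I anticipate is the bookkeeping on $\h_1$, where the norm bound $\pi/2$ may be attained: the index-zero condition secured in Step 1 is precisely what allows us to match the two finite-dimensional summands and keep $\|X\|\le\pi/2$.
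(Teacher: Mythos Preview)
Your proposal is correct and follows essentially the same route as the paper's own proof: both reduce to $P-Q\in\k$ with $\mathrm{index}(P,Q)=0$ via the description of the connected components, then split $\h=N(K_0^2-I)^\perp\oplus N(K_0^2-I)$ with $K_0=P-Q$ and build the compact co-diagonal exponent $X=X_0\oplus X_1$ exactly as in the proof of Theorem \ref{teo 1}. The only cosmetic differences are that you invoke a single common diagonal projection $E_0$ (the paper uses two, $E_0$ and $E_1$, with the same effect) and that you write out the explicit formula $X_1=i\tfrac{\pi}{2}(V-V^*)$ from Theorem \ref{teo 1} rather than citing \cite{pemenoscu} for the finite-dimensional piece.
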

\begin{proof}
We consider first the case when $P,Q\in\p^\infty_{\d+\k}$. 
There exist diagonal projections $E_0$, $E_1$ such that $index(P,E_0)=index(Q,E_1)=0$. Since $E_0$ and $E_1$ lie in the same connected component, it also holds that $index(E_0,E_1)=0$. Then $index(P,Q)=0$. Also since the differences $P-E_0, Q-E_1$ and $E_0-E_1$ are compact, also the difference $K=P-Q$ is compact. Using the same reasoning as in the proof of Theorem  \ref{teo 1}, namely decomposing $\h$ as
$$
\h=N(K^2-I)^\perp \oplus N(K^2-I),
$$
we have that in $N(K^2-I)^\perp$ the reductions of $P$ and $Q$ lie at distance 
strictly less than $1$, and thus one can find a  compact self-adjoint operator 
$X_0$, which is co-diagonal with respect to these reduced projections, 
satisfies $\|X_0\|<\pi/2$, and $e^{iX_0}$ conjugates the (reduced) projections. 
As we saw, the operator $X_1$ performing the analogous task in the finite 
dimensional space $N(K^2-I)$ can also be chosen co-diagonal (see 
\cite{pemenoscu} for an explicit description of $X_1$). In this case $X_1$ (if 
non trivial), has norm equal to $\pi/2$. Then $X=X_0\oplus X_1$ is the 
self-adjoint exponent of the minimal curve.

The cases $\p^0_{\d+\k}$ and $\p^1_{\d+\k}$ are straightforward.
\end{proof}

Projections $P\in\b(\h)$ with the same nullity $n(P)$ (dimension of the nullspace) and rank $r(P)$ (dimension of the range) are unitarily equivalent in $\b(\h)$. The next result establishes that projections in $\d+\k$ with the same nullity and rank are conjugated by an automorphism of $\d+\k$:
\begin{prop}
Let $P,Q\in\p_{\d+\k}$ such that $n(P)=n(Q)$ and $r(P)=r(Q)$, Then there exist an automorphism $\theta$ of $\d+\k$ such that $\theta(P)=\theta(Q)$.
\end{prop}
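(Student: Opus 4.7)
The plan is to reduce $P$ and $Q$ to diagonal projections using inner automorphisms coming from $\u_{\d+\k}$, and then to interchange the resulting diagonals by a permutation of $\mathbb{N}$ realized through the automorphism $\theta_\sigma$.

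First, I would apply Theorem \ref{teo 1} to each of $P$ and $Q$ separately: there exist diagonal projections $E_P, E_Q$ and unitaries $U_P, U_Q \in \u_{\d+\k}$ (of the form $e^{iX}$ with $X^*=X \in \k$, $\|X\| \le \pi$) such that $P = U_P E_P U_P^*$ and $Q = U_Q E_Q U_Q^*$. Since $U_P$ and $U_Q$ are unitary operators on $\h$, the diagonal projections $E_P$ and $E_Q$ inherit the rank and nullity of $P$ and $Q$ respectively. Writing $S_P, S_Q \subset \mathbb{N}$ for their supports (the indices $n$ with $(E_P)_{nn}=1$, resp.\ $(E_Q)_{nn}=1$), the hypotheses $r(P)=r(Q)$ and $n(P)=n(Q)$ translate into the cardinality identities $|S_P|=|S_Q|$ and $|\mathbb{N} \setminus S_P|=|\mathbb{N} \setminus S_Q|$.

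Second, by these equalities one may choose a permutation $\sigma \in \s(\mathbb{N})$ with $\sigma(S_P)=S_Q$: enumerate $S_P$ and $S_Q$ in parallel, enumerate their complements in parallel, and define $\sigma$ piecewise by these enumerations. A direct check using the definition $U_\sigma e_n = e_{\sigma(n)}$ gives $U_\sigma E_P U_\sigma^* = E_Q$.

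Finally, I would set $\theta = Ad(U_Q) \circ \theta_\sigma \circ Ad(U_P^*)$. Each factor is a $*$-automorphism of $\d+\k$: $Ad(U_P^*)$ and $Ad(U_Q)$ are inner automorphisms since $U_P, U_Q \in \u_{\d+\k}$, while $\theta_\sigma = Ad(U_\sigma)$ is one of the automorphisms exhibited in Section 4 (outer precisely when $\sigma$ has infinite support). A one-line computation then yields $\theta(P) = U_Q U_\sigma E_P U_\sigma^* U_Q^* = U_Q E_Q U_Q^* = Q$. The only point requiring any vigilance is that Theorem \ref{teo 1} applies uniformly in all three classes $\p^0_{\d+\k}, \p^1_{\d+\k}, \p^\infty_{\d+\k}$; this is covered by the final sentence of that theorem's proof. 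No other step presents a real obstacle.
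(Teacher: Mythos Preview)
Your proof is correct and follows essentially the same approach as the paper's own proof: conjugate $P$ and $Q$ to diagonal projections via Theorem \ref{teo 1}, match the diagonals by a permutation $\sigma$, and compose the resulting inner automorphisms with $\theta_\sigma$. The only differences are notational (you write $Ad(U_Q)\circ\theta_\sigma\circ Ad(U_P^*)$ where the paper writes $\theta_Y\theta_\sigma\theta_{-X}$).
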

\begin{proof}
By the above results, there exist diagonal projections $E=D_\ee, F=D_\ff$ and 
self-adjoint compact operators $X,Y$ such that 
$$
P=e^{iX}Ee^{-iX} \ \hbox{ and } \ Q=e^{iY}Fe^{-iY}.
$$
Thus $E$ and $F$ satisfy $n(E)=n(F)$ and $r(E)=r(F)$. In other words, 
$$
\#\{n\in\mathbb{N}: e_n=0\}=\#\{n\in\mathbb{N}: f_n=0\} \hbox{ and } \#\{n\in\mathbb{N}: e_n=1\}=\#\{n\in\mathbb{N}: f_n=1\}.
$$
Then there exists a permutation $\sigma$ of $\mathbb{N}$ such that 
$$
\sigma(\{n\in\mathbb{N}: e_n=0\})=\{n\in\mathbb{N}: f_n=0\} \hbox{ and } \sigma(\{n\in\mathbb{N}: e_n=1\})=\{n\in\mathbb{N}: f_n=1\}.
$$
That is, 
$$
\theta_\sigma(E)=U_\sigma E U_\sigma^*=U_\sigma E U_{\sigma^{-1}}=F.
$$
Thus, $Q=e^{iY}\theta_\sigma(E)e^{-iY}=e^{iY}\theta_\sigma(e^{-iX}Pe^{iX})e^{-iY}=\theta_Y\theta_\sigma\theta_{-X}(P)$.
\end{proof}

Next we address a problem which is related to the question \ref{pregunta} of the previous section. Let  $U$ a unitary operator which implements an automorphism of $\d+\k$. This means that for any diagonal operator $D$  there exist a diagonal $D'$ and a compact $K$ such that $UDU^*=D'+K$.
\begin{lem}
Let $D_0$ be a diagonal operator with finite spectrum, and $U$ a unitary 
operator implementing an automorphism of $\d+\k$. Then there exist a compact 
self-adjoint operator $X_0$ and a permutation $\sigma_0$ of $\mathbb{N}$ such 
that $U_0=e^{iX_0}U_{\sigma_0}$ verifies
$$
UD_0U^*=U_0D_0U_0^*.
$$
\end{lem}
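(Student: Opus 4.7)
The plan is to reduce to the simultaneous conjugation of finitely many mutually orthogonal projections. Write $D_0 = \sum_{k=1}^n \alpha_k E_k$ as its spectral decomposition with $\{E_k\}$ mutually orthogonal diagonal projections summing to $I$, and set $P_k := UE_k U^*$. These are mutually orthogonal projections in $\d+\k$ that sum to $I$ and satisfy $r(P_k)=r(E_k)$ and $n(P_k)=n(E_k)$; it suffices to produce $X_0^*=X_0 \in \k$ and $\sigma_0 \in \s(\NN)$ such that $e^{iX_0}U_{\sigma_0} E_k U_{\sigma_0}^* e^{-iX_0} = P_k$ for every $k$.

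First I would build diagonal projections $F_1,\ldots, F_n$ that also form a resolution of the identity, with $P_k - F_k \in \k$, $r(F_k) = r(E_k)$, and $\mathrm{index}(P_k, F_k) = 0$. Lifting each $\pi(P_k) \in \ell^\infty/\cc_0$ to a $\{0,1\}$-sequence, the orthogonality and completeness relations pass to disjoint covers of $\NN$ modulo a finite set; redistributing that finite set one obtains a partition $\NN = \bigsqcup_k B_k$ with $|B_k|=r(E_k)$, and we set $F_k$ to be the diagonal projection onto $\overline{\mathrm{span}}\{e_j: j \in B_k\}$, so that $P_k - F_k \in \k$. The operator $T := \sum_k P_k F_k$ then satisfies $T - I = \sum_{k\ne k'}(P_{k'}-F_{k'})F_k \in \k$, so it is Fredholm of index $0$; its block-diagonal structure with respect to the decompositions $\h = \bigoplus_k R(F_k) = \bigoplus_k R(P_k)$ gives $0 = \mathrm{ind}(T) = \sum_k \mathrm{index}(P_k, F_k)$. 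Any residual non-zero indices can occur only for classes $\p^\infty_{\d+\k}$, and these can be canceled by moving finitely many elements between the corresponding $B_k$'s, using the additivity formula of \cite{ass}; this leaves each $r(F_k)$ unchanged.

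Second, the matching cardinalities $|\mathrm{supp}(E_k)| = |B_k|$ and $|\NN \setminus \mathrm{supp}(E_k)| = |\NN \setminus B_k|$ produce a permutation $\sigma_0 \in \s(\NN)$ with $\sigma_0(\mathrm{supp}(E_k)) = B_k$ for every $k$, so that $U_{\sigma_0} E_k U_{\sigma_0}^* = F_k$.

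The main difficulty is to find one single compact self-adjoint $X_0$ with $e^{iX_0}F_k e^{-iX_0} = P_k$ for every $k$ simultaneously; Theorem \ref{teo 1} does this one $k$ at a time, but here the same exponent must serve for all. Reusing $T = \sum_k P_k F_k$, the vanishing of each $\mathrm{index}(P_k, F_k)$ permits the choice of a partial isometry $V$ with initial space $\ker T = \bigoplus_k R(F_k) \cap N(P_k)$ and final space $\ker T^* = \bigoplus_k R(P_k) \cap N(F_k)$, matching the $k$-th blocks. Since $V$ lands in $\ker T^*$, one has $V^*T = T^*V = 0$, whence $(T+V)^*(T+V) = T^*T + V^*V$ is positive and injective, and so $T+V$ is invertible with $(T+V) - I \in \k$. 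The orthogonality of the $F_k$ (resp.\ the $P_k$) yields $T^*T = \sum_k F_k P_k F_k$, which commutes with each $F_k$, as does $V^*V$; hence $|T+V|$ commutes with each $F_k$, and the unitary $U_0$ in the polar decomposition $T+V = U_0|T+V|$ satisfies $U_0 F_k U_0^* = P_k$ for every $k$. Because $U_0 - I \in \k$, continuous functional calculus around $1 \in \sigma(U_0)$ produces $X_0 = -i\log(U_0)$ compact and self-adjoint with $e^{iX_0} = U_0$; then $e^{iX_0}U_{\sigma_0}$ is the required $U_0$, and $U_0 D_0 U_0^* = UD_0 U^*$.
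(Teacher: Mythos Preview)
Your argument is correct and reaches the same conclusion, but by a genuinely different route than the paper's proof. The paper proceeds \emph{iteratively}: it uses Proposition~5.12 (the preceding proposition) to conjugate $UE_1U^*$ back to $E_1$ by some $e^{iX_1}U_{\sigma_1}$, observes that $U_1=(e^{iX_1}U_{\sigma_1})^*U$ is then reduced by $R(E_1)$, and repeats the step inside $R(E_1)^\perp$ for $E_2,\dots,E_n$; the finitely many factors are then reorganized via the commutation rule $U_\sigma e^{iX}=e^{iU_\sigma XU_\sigma^*}U_\sigma$ into a single $e^{iX_0}U_{\sigma_0}$. Your approach is \emph{simultaneous}: you first manufacture a diagonal resolution $\{F_k\}$ with $P_k-F_k\in\k$, $r(F_k)=r(E_k)$ and $\mathrm{index}(P_k,F_k)=0$ for every $k$, and then, instead of invoking Theorem~\ref{teo 1} block by block, you produce a single $W\in\u_\infty(\h)$ conjugating all $F_k$ to all $P_k$ at once via the polar part of $T+V$ with $T=\sum_k P_kF_k$. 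The key identity $(T+V)F_k=P_k(T+V)$ (which you implicitly use, and which follows from $TF_k=P_kF_k=P_kT$ together with $VF_k=P_kV$ by your block choice of $V$) is what makes this work. The paper's method is more elementary---just repeated application of the previous proposition---while yours avoids the induction and yields the compact exponent in one stroke, at the cost of the polar-decomposition argument. Two minor remarks: the formula for $T-I$ carries a sign (one gets $-\sum_{k\ne k'}(P_{k'}-F_{k'})F_k$), and your block indices satisfy $\mathrm{ind}(T_k)=-\mathrm{index}(P_k,F_k)$ under the paper's convention; neither affects the conclusion $\sum_k\mathrm{index}(P_k,F_k)=0$. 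Also, you use $U_0$ for two different objects in the last paragraph; renaming the polar part (say $W$) would avoid confusion.
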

\begin{proof}
Let $D_0=\sum_{k=1}^{n}d_k E_k$, with $E_k$ mutually orthogonal diagonal 
projections. Consider the projection $UE_1U^*$ in $\d+\k$. By the same argument 
as in the previous proposition,  there exist a compact self-adjoint operator 
$X_1$ and a permutation $\sigma_1$ such that 
$$
UE_1U^*=e^{iX_1}U_{\sigma_1}E_1(e^{iX_1}U_{\sigma_1})^*.
$$
Denote by $U_1=(e^{iX_1}U_{\sigma_1})^*U$. Then $U_1E_1U_1^*=E_1$. Then $U_1$ 
is reduced by $R(E_1)$ , which is spanned by a subset of the fixed orthonormal 
basis. Consider $D_2=\sum_{k=2}^n d_k E_k$ and $U_1D_2U_1^*\in\d+\k$. By the 
same argument, fixing now $E_2$, there exist a compact self-adjoint operator 
$X_2$ of $R(E_1)^\perp$,  and a permutation $\sigma_2$ which leaves 
$\{k\in\mathbb{N}: e_k\in R(E_1)\}$ fixed, such that
$$
U_1E_2U_1^*=e^{iX_2}U_{\sigma_2}E_2(e^{iX_2}U_{\sigma_2})^*.
$$
Denote $U_2=(e^{iX_2}U_{\sigma_2})^*U_1$. Then 
$$
U_2E_1U_2^*=E_1,
$$
because  $e^{iX_2}U_{\sigma_2}$ acts as the identity in $R(E_1)$, and 
$$
U_2E_2U_2^*=E_2
$$
by construction. Iterating (finitely many times) this procedure, we obtain a unitary 
$$
U_0=e^{iX_n}U_{\sigma_n}e^{iX_{n-1}}U_{\sigma_{n-1}}\dots e^{iX_1}U_{\sigma_1},
$$
which as in (\ref{tetas}), can be rewritten in the form
$$
U_0=e^{iX_0}U_{\sigma_0},
$$
for $X_0$ self-adjoint and compact and $\sigma_0$ a permutation of 
$\mathbb{N}$, such that 
$$
U_0^*U E_k(U_0^*U)^*=E_k,
$$
for all $k=1,\dots, n$, and thus $U_0D_0U_0^*=UD_0U^*$.
\end{proof}
Clearly, the unitary operator $U_0$ above depends on $D$. 
\begin{coro}
Let $U$ be a unitary operator implementing an automorphism of $\d+\k$, and $D$ be an arbitrary diagonal operator.  Then there exist unitary operators $U_n=e^{iX_n}U_{\sigma_n}$, with $X_n=X_n^*$ compact,  such that 
$$
U_nDU_n^*\to UDU^*
$$
in norm.
\end{coro}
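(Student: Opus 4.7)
The plan is to reduce the statement to the previous lemma by approximating the arbitrary diagonal $D$ in norm by diagonals with finite spectrum, and then using the fact that conjugation by unitary operators is isometric in the operator norm.

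First, I would pick a sequence of diagonal operators $D_n\in\d$ with finite spectrum such that $\|D-D_n\|\to 0$. This is possible because sequences in $\ell^\infty$ with finitely many values are dense in $\ell^\infty$; this approximation appears already in the proof of Proposition 2.3 and in Proposition 2.2. For each such $D_n$, the previous lemma applies: there exist a compact self-adjoint $X_n$ and a permutation $\sigma_n$ of $\mathbb{N}$ such that $U_n:=e^{iX_n}U_{\sigma_n}$ satisfies
$$
U_n D_n U_n^* = U D_n U^*.
$$

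Next, I would estimate the difference using the triangle inequality:
$$
\|U_n D U_n^* - U D U^*\|\le \|U_n D U_n^* - U_n D_n U_n^*\| + \|U_n D_n U_n^* - U D_n U^*\| + \|U D_n U^* - U D U^*\|.
$$
The middle summand vanishes by the choice of $U_n$, and the outer summands are both equal to $\|D-D_n\|$ since $U_n$ and $U$ are unitary operators on $\h$ and conjugation by a unitary is an isometry on $\b(\h)$. Hence
$$
\|U_n D U_n^* - U D U^*\|\le 2\,\|D-D_n\|\longrightarrow 0,
$$
which is the desired norm convergence.

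There is essentially no obstacle: the content is packed into the previous lemma (the case of finite spectrum), and the extension to arbitrary diagonals is a routine $3\varepsilon$-argument. The only mild subtlety to flag is that the unitaries $U_n$ depend on $n$, so one must be careful not to confuse $U_n D_n U_n^*$ with $U_n D U_n^*$; this is precisely why the outer two terms in the triangle inequality do not collapse into a single estimate, but each is separately controlled by $\|D-D_n\|$ thanks to the isometry property.
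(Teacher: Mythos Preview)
Your proof is correct and follows essentially the same approach as the paper: approximate $D$ in norm by diagonals $D_n$ with finite spectrum, apply the preceding lemma to obtain $U_n$ with $U_nD_nU_n^*=UD_nU^*$, and conclude by the isometry of unitary conjugation. The paper's write-up is slightly terser (it combines your middle and right-hand terms by observing directly that $U_nD_nU_n^*=UD_nU^*\to UDU^*$), but the argument is the same.
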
 
\begin{proof}
Recall from Remark \ref{remark preliminar}, that diagonal operators with finite spectrum are dense in $\d$. Let $D_n$ be a sequence of diagonal operators with finite spectrum such that $D_n\to D$ in norm. From the above Lemma, we know that there exist  $U_n=e^{iX_n}U_{\sigma_n}$ such that 
$$
U_nD_nU_n^*=UD_nU^*.
$$
Then $U_nD_nU_n^*\to UDU^*$. On the other hand, 
$$
\|U_nDU_n^*-U_nD_nU_n^*\|=\|U_n(D-D_n)U_n^*\|=\|D-D_n\|\to 0.
$$
\end{proof}

\begin{rem}
The above result could be phrased as saying that automorphisms are pointwise inner  on diagonal elements with finite spectrum, and asymptotically given by conjugation with unitaries of the form $U_0=e^{iX_0}U_{\sigma_0}$ (with $X_0$ compact). This  clearly falls short of answering Question \ref{pregunta}. However, suppose that  for any diagonal $D$, there exists a unitary $U_0$ as above such that 
$UDU^*=U_0DU_0^*$. Then, choosing a unitary $D=D_\dd$ with $d_j\ne d_k$ when $k\ne j$,  this would imply that
$U_0^*U$ is  a diagonal unitary operator. This would answer our question in the affirmative.
\end{rem}

\section{Topology of the unitary group}

Recall that $\u_\infty(\h)=\{U\in\u(\h): U-I\in\k\}=\{e^{iX}:X^*=X\in\k\}$, and that  $\u_\d$ denotes the unitary group of $\d$. Clearly $\u_\infty(\h)$ and $\u_\d$ are closed subgroups of $\u_{\d+\k}$. 
Consider the following map
\begin{equation}\label{fibrado}
\pp:\u_\d\times \u_\infty(\h) \to \u_{\d+\k} \ , \ \ \pp(D,V)=DV.
\end{equation}
As seen in Proposition \ref{factores unitarios}, $\pp$ is onto. We shall see 
that it is a submersion and a fibre bundle. Clearly it is a $C^\infty$ map. In 
order to prove that it is a submersion, let us show that it has $C^\infty$ 
local cross sections. Since  the spaces concerned are groups, clearly, it 
suffices to show the existence of a local cross section on a neighborhood of 
the identity:
\begin{lem} There is a map $\ss:\b_2=\{U\in\u_{\d+\k}: \|U-I\|<2\}\to \u_\infty(\h)\times \u_\d$ such that
$$
\pp\circ \ss=id_{\b_2}.
$$
\end{lem}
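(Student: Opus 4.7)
The plan is to take the principal-branch logarithm of $U$ inside the $C^*$-algebra $\d+\k$ and extract its diagonal part via the conditional expectation $\Delta$. The hypothesis $\|U-I\|<2$ is precisely what ensures $-1\notin\sigma(U)$: for the normal operator $U$, $\|U-I\|$ equals the spectral radius of $U-I$, so $-1\in\sigma(U)$ would force $\|U-I\|\ge 2$. This is exactly the range on which the principal branch of $\log$ is holomorphic.

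Since $\d+\k$ is inverse-closed in $\b(\h)$ (if $T=D+K\in\d+\k$ is invertible with $D\in\d$ invertible, then $T^{-1}=(I+D^{-1}K)^{-1}D^{-1}\in\d+\k$, because $(I+D^{-1}K)^{-1}\in I+\k$; compare the proof of Proposition 3.2), holomorphic functional calculus applied to $U\in\b_2$ yields a self-adjoint element $Y:=-i\log U\in(\d+\k)\cap\b_s(\h)$ with $\|Y\|<\pi$ and $U=e^{iY}$, and the map $U\mapsto Y$ is smooth on $\b_2$. Using the distinguished decomposition, write $Y=D_Y+K_Y$ with $D_Y:=\Delta(Y)\in\d$ self-adjoint and $K_Y\in\k$ self-adjoint, and define
$$
\ss(U):=\bigl(e^{iD_Y},\,e^{-iD_Y}U\bigr).
$$
Then $e^{iD_Y}\in\u_\d$, the second coordinate is unitary, and $\pp\circ\ss(U)=e^{iD_Y}\cdot e^{-iD_Y}U=U$.

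What needs verification is that $e^{-iD_Y}U$ actually lies in $\u_\infty(\h)$, i.e.\ that $e^{-iD_Y}U-I\in\k$. The cleanest route is to push through the quotient $\pi:\d+\k\to(\d+\k)/\k$: this quotient is commutative and $\pi(K_Y)=0$, so
$$
\pi(e^{-iD_Y}U)=e^{-i\pi(D_Y)}\,e^{i\pi(Y)}=e^{i\pi(Y-D_Y)}=e^{i\pi(K_Y)}=1,
$$
whence $e^{-iD_Y}U-I\in\ker\pi=\k$. Smoothness of $\ss$ is immediate from the composition of smooth operations $U\mapsto Y\mapsto D_Y\mapsto e^{\pm iD_Y}\mapsto e^{-iD_Y}U$ (holomorphic functional calculus, the bounded linear map $\Delta$, the exponential, and multiplication). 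The expected obstacle in a more direct approach would be the non-commutativity of $\d+\k$, which rules out a naive factorization like $e^{i(D_Y+K_Y)}=e^{iD_Y}e^{iK_Y}$; passing to the commutative Calkin quotient of $\d+\k$ is precisely what turns this difficulty into a one-line computation.
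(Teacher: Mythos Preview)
Your proof is correct and follows essentially the same approach as the paper: take $Y=-i\log U$ via functional calculus (the paper's $Z$), extract the diagonal part $D_Y=\Delta(Y)$, and set $\ss(U)=(e^{iD_Y},e^{-iD_Y}U)$. The only cosmetic difference is in verifying $e^{-iD_Y}U-I\in\k$: you pass to the commutative quotient $(\d+\k)/\k$ and compute $\pi(e^{-iD_Y}U)=1$, while the paper asserts directly that $e^{iZ}-e^{i\Delta(Z)}\in\k$ because $Z-\Delta(Z)\in\k$; these are two phrasings of the same fact.
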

\begin{proof}
First note, by general considerations on C$^*$-algebras,  that $\|U-I\|<2$, implies that there exists a unique $Z^*=Z\in\d+\k$ with $\|Z\|<\pi$ such that $e^{iZ}=U$, and the (logarithm) map $\b_2\ni U\mapsto Z$ is $C^\infty$.
Note that 
$$
U=e^{iZ}=e^{i\Delta(Z)+i(Z-\Delta(Z))}=e^{i\Delta(Z)}+K,
$$
for  $K=e^{iZ}-e^{i\Delta(Z)}\in\k$, because $Z-\Delta(Z)\in\k$. Then 
$$
e^{iZ}=e^{i\Delta(Z)}(I+e^{-i\Delta(Z)}K),
$$
with $e^{i\Delta(Z)}\in\u_\d$ and $(I+e^{-i\Delta(Z)}K)\in\u_\infty(\h)$. That is, 
$$
\ss(U)=\left(e^{i\Delta(Z)}, I+e^{-i\Delta(Z)}(e^{iZ}-e^{i\Delta(Z)})\right)=(e^{i\Delta(Z)}, e^{-i\Delta(Z)}e^{iZ})
$$
is a $C^\infty$ cross section for $\pp$.
\end{proof}

\begin{rem}
The fibre of $\pp$ over $I$ is the set
$$
\f=\pp^{-1}(\{I\})=\{(D,D^*)\in\u_\d^2: D\in\u_\infty(\h)\}.
$$
Clearly $\f$ is an abelian group, and the unitaries $D$ are of the form $D_\zz$, with $\zz=\{z_n\}$ such that $|z_n|=1$ and $z_n\to 1$. Therefore 
$$
\f\simeq\{\zz=\{z_n\}: |z_n|=1, z_n\to 1\}=\u_{C(\alpha\mathbb{N})},
$$
the unitary group  of  the  continuous functions on the one point  compactification $\alpha\mathbb{N}=\mathbb{N}\cup\{\infty\}$ of $\mathbb{N}$.
\end{rem}
Since the fibres of $\pp$ are groups, and $\pp$ has smooth local cross 
sections, it follows that
\begin{coro}
The map $\pp$ is a group-quotient map with local cross sections, thus a 
submersion and a locally trivial fibre bundle.
\end{coro}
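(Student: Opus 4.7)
The plan is to use the local section $\ss$ on $\b_2$ provided by the previous lemma, together with right-translation in the group $\u_{\d+\k}$, to produce smooth local sections of $\pp$ around every point, and then to promote these to local trivializations by exhibiting $\pp$ as the orbit map for a natural action of the fibre $\f$.

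First I would formalize the action of the fibre group on the total space. Define
$$
\f \curvearrowright \u_\d \times \u_\infty(\h), \qquad (D_0, D_0^*) \cdot (D, V) = (D D_0,\; D_0^* V).
$$
This action is smooth and free, and $\pp(DD_0, D_0^* V) = DD_0 D_0^* V = DV = \pp(D,V)$, so it preserves fibres. It is also transitive on fibres: if $DV = D'V'$, then $D^{-1}D' = V(V')^{-1}$ lies in $\u_\d \cap \u_\infty(\h)$, providing the required element of $\f$. Hence the fibres of $\pp$ are exactly the $\f$-orbits.

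Next, for a given $U_0 \in \u_{\d+\k}$, choose any preimage $(D_0, V_0) \in \pp^{-1}(U_0)$ and consider the open neighborhood $U_0 \b_2$ of $U_0$. For $U = U_0 W$ with $W \in \b_2$ and $\ss(W) = (D_W, V_W)$, set
$$
\tilde{\ss}_{U_0}(U) = \bigl(D_0 D_W,\; D_W^{-1} V_0 D_W V_W\bigr).
$$
Since $D_W^{-1} V_0 D_W - I = D_W^{-1}(V_0 - I) D_W \in \k$ (by the ideal property), the second coordinate lies in $\u_\infty(\h)$; a one-line computation yields $\pp(\tilde{\ss}_{U_0}(U)) = D_0 V_0 D_W V_W = U_0 W = U$, and smoothness follows from smoothness of $\ss$ and of the algebraic operations in $\b(\h)$. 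Thus $\pp$ has a smooth local section near every point, whence $\pp$ is a submersion.

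Finally, for the fibre bundle structure, combine the local section with the $\f$-action to define
$$
\Phi_{U_0}: (U_0 \b_2) \times \f \longrightarrow \pp^{-1}(U_0 \b_2), \qquad \Phi_{U_0}(U, f) = f \cdot \tilde{\ss}_{U_0}(U),
$$
which is a smooth bijection whose inverse sends $(D,V)$ to $(\pp(D,V), f)$, with $f$ the unique element of $\f$ transporting $\tilde{\ss}_{U_0}(\pp(D,V))$ to $(D,V)$. This exhibits $\pp$ as a locally trivial fibre bundle with structure group $\f$. The only real subtlety, and hence the main obstacle, is that $\pp$ is not a group homomorphism because $\u_\d$ and $\u_\infty(\h)$ do not commute; the conjugation of $V_0$ by $D_W$ in the formula for $\tilde{\ss}_{U_0}$ is precisely what is needed to keep the two factors in their respective subgroups, and verifying that little algebraic identity is the step that makes everything fit together.
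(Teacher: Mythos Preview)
Your argument is correct and follows the route the paper has in mind; the paper's proof is a single sentence appealing to ``the fibres are groups and $\pp$ has smooth local cross sections,'' and you have written out precisely the details this sentence hides. One remark that may sharpen the picture: your translated section $\tilde{\ss}_{U_0}(U)=(D_0D_W,\,D_W^{-1}V_0D_WV_W)$ is exactly left translation by $(D_0,V_0)$ in the \emph{semidirect} product $\u_\d\ltimes\u_\infty(\h)$ (using that $\u_\d$ normalizes $\u_\infty(\h)$), and with this group structure $\pp$ \emph{is} a group homomorphism with kernel $\f$, which is what makes the paper's one-line ``group-quotient'' justification legitimate.
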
 
Clearly, the group $\f$ is connected: a sequence $\zz=\{z_n\}$ in $\mathbb{T}$ 
which accumulates only at $1$ can be lifted to a sequence of real numbers 
accumulating at $0$, $z_n=e^{is_n}$ with $s_n\to 0$. Then the curve 
$\zz(t)=e^{its_n}$ connects $\zz$ with the identity element of this group. This 
implies, by means of the homotopy exact sequence of the fibre bundle $\pp$, 
that $\pi_1(\u_{\d+\k})$ is the image of the $\pi_1$ group of 
$\u_\d\times\u_\infty(\h)$:
$$
\pi_1(\f)\to \pi_1(\u_\d)\times\pi_1(\u_\infty(\h))\stackrel{\pi_1(\pp)}\longrightarrow\pi_1(\u_{\d+\k})\to 0.
$$
The homotopy groups of $\u_\infty(\h)$ have been computed, see for instance \cite{palais}. We shall be concerned with the first two homotopy groups.  As remarked, $\pi(\u_\infty(\h))=0$, and $\pi_1(\u_\infty(\h))=\mathbb{Z}$.
\begin{rem}
Let us characterize the first homotopy groups of $\f$ and $\u_\d$. This can be done by means of the exponential map, which in both cases gives the universal covering of these groups:
\begin{enumerate}
\item
For this purpose, we identify $\f$ with $\u_{C(\alpha\mathbb{N})}$. Then 
$$
\exp_\f:\cc_0^\mathbb{R}\to\f \ , \ \ \exp_\f(\xx)=D_{e^{2\pi i\xx}},
$$
where $\cc_0^\mathbb{R}$ denotes the space of real sequences which tend to zero, and $e^{2\pi i\xx}$ is the sequence $\{e^{2\pi ix_n}\}$. This map is clearly a covering map, the fibre over $I$ is the set of sequences of integers which tend to zero, i.e. the (additive) group $\cc_{00}^\mathbb{Z}$ of  sequences of integers which are zero but for  finite terms. Thus
$$
\pi_1(\f)\simeq\exp_\f^{-1}(I)\simeq \cc_{00}^\mathbb{Z}
$$
%{\bf este es el modo correcto de notarlo?}
\item
The same map, defined over $\ell_\mathbb{R}^\infty$, the space of bounded real sequences, gives the universal covering of $\u_\d$,
$$
\exp_\d:\ell_\mathbb{R}^\infty\to \u_\d \ , \ \  \exp_\d(\xx)=D_{e^{2\pi i \xx}}.
$$
The fibre over $I$ in this case is the set of sequences of integers which are 
bounded (and thus assume a finite set of values), let us denote this group by 
$\ell^\infty(\mathbb{N},\mathbb{Z})$. Thus 
$$
\pi_1(\u_\d)\simeq \ell^\infty(\mathbb{N},\mathbb{Z}).
$$
\end{enumerate}
\end{rem}
Therefore, in order to compute the first homotopy group of $\u_{\d+\k}$, we must identify the image of the inclusion map
$$
\iota:\f\to \u_\d\times \u_\infty(\h) \ , \ \ \iota(\xx)=(D_{e^{2\pi i \xx}},D_{e^{-2\pi i \xx}})
$$ 
at the $\pi_1$-level,
$$
\pi_1(\iota): \cc_{00}^\mathbb{Z} \to \ell^\infty(\mathbb{N},\mathbb{Z}) \times \mathbb{Z}.
$$ 
\begin{prop}
With the above notations and identifications,
$$
\pi_1(\iota)(k_1,\dots,k_n,0,\dots)=\left((k_1,\dots, k_n,0, \dots), -\sum_{j=1}^n k_j\right).
$$
In particular, $\pi_1(\iota)$ is injective. 
\end{prop}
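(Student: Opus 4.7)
The plan is to construct an explicit loop representing $\mathbf{k}=(k_1,\dots,k_n,0,\dots)\in\cc_{00}^{\mathbb{Z}}\simeq\pi_1(\f)$ and then evaluate its image componentwise, using the universal coverings $\exp_\f$ and $\exp_\d$ along with the standard identification of $\pi_1(\u_\infty(\h))$ with $\mathbb{Z}$. Lift $\mathbf{k}$ to the straight-line path $t\mapsto t\mathbf{k}\in\cc_0^{\mathbb{R}}$ for $t\in[0,1]$; this projects via $\exp_\f$ to the loop $\gamma(t)=D_{e^{2\pi i t\mathbf{k}}}$ in $\f$, whose homotopy class is precisely $\mathbf{k}$. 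Composing with $\iota$ gives the loop $\iota\circ\gamma(t)=\bigl(D_{e^{2\pi i t\mathbf{k}}},D_{e^{-2\pi i t\mathbf{k}}}\bigr)$ in $\u_\d\times\u_\infty(\h)$, whose two coordinate classes must be identified.

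The first coordinate is immediate: the same straight-line path $t\mapsto t\mathbf{k}\in\ell_{\mathbb{R}}^{\infty}$ lifts $t\mapsto D_{e^{2\pi i t\mathbf{k}}}$ under $\exp_\d$ and ends at $\mathbf{k}$, so its class in $\pi_1(\u_\d)\simeq\ell^{\infty}(\mathbb{N},\mathbb{Z})$ is $(k_1,\dots,k_n,0,\dots)$. For the second coordinate, note that $D_{e^{-2\pi i t\mathbf{k}}}$ acts as the identity on the closed span of $\{e_{n+1},e_{n+2},\dots\}$, so the loop factors through the inclusion $U(n)\hookrightarrow\u_\infty(\h)$ obtained by extending by the identity. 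Under the standard isomorphism $\pi_1(\u_\infty(\h))\simeq\mathbb{Z}$---which on finite-dimensional loops is realised by the winding number of the determinant---our loop has determinant $t\mapsto e^{-2\pi i t(k_1+\cdots+k_n)}$, with winding number $-\sum_{j=1}^{n}k_j$. Combining the two coordinates yields the asserted formula.

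Injectivity is then a free consequence: if $\pi_1(\iota)(\mathbf{k})=0$, the first coordinate alone forces $(k_1,\dots,k_n,0,\dots)=0$ in $\ell^{\infty}(\mathbb{N},\mathbb{Z})$, hence $\mathbf{k}=0$. The only delicate point---and thus the main (though essentially bookkeeping) obstacle---is fixing the sign convention for $\pi_1(\u_\infty(\h))\simeq\mathbb{Z}$, whose generator one takes to be the class of $t\mapsto\mathrm{diag}(e^{2\pi i t},1,1,\dots)$; with this convention the determinant calculation yields $-\sum_{j=1}^{n}k_j$ with the sign consistent with the definition $\iota(\xx)=(D_{e^{2\pi i\xx}},D_{e^{-2\pi i\xx}})$.
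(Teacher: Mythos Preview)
Your proof is correct and follows essentially the same approach as the paper: build the explicit loop $t\mapsto D_{e^{2\pi i t\mathbf{k}}}$, push it through $\iota$, and read off the two coordinates via the universal coverings of $\u_\d$ and the standard $\pi_1(\u_\infty(\h))\simeq\mathbb{Z}$. The only cosmetic difference is that the paper computes on the generators $\ee_j$ and extends by the homomorphism property, whereas you treat a general $\mathbf{k}$ at once and make the determinant winding-number identification for the $\u_\infty(\h)$ factor explicit; neither route involves any idea absent from the other.
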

\begin{proof}
The sequences of the form $\ee_j=(0,\dots,1, 0, \dots)$ form a set of generators for the group $\cc_{00}^\mathbb{Z}$. They correspond to loops of the form
$$
\epsilon_j(t)=D_{e^{2\pi i t\ee_j}}.
$$
Then $\iota(\epsilon_j(t))=(D_{e^{2\pi i t\ee_j}},D_{e^{-2\pi i t\ee_j}})$. The first coordinate of this pair, corresponds in $\pi_1(\u_\d)$ with the sequence $\ee_j$. The second coordinate, corresponds in $\pi_1(\u_\infty(\h))$ with $-1\in\mathbb{Z}$. 
\end{proof}
\begin{coro}
$$
\pi_1(\u_{\d+\k})\simeq \ell^\infty(\mathbb{N},\mathbb{Z}) \times \mathbb{Z}\  /\  \{ (\zz, k)\in\cc_{00}^{\mathbb{Z}}\times \mathbb{Z}: -\sum_{j\ge1}z_j=k\}.
$$
\end{coro}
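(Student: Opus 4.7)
The plan is to read this off from the homotopy long exact sequence of the fibre bundle $\pp$ introduced in (\ref{fibrado}), combined with the explicit description of $\pi_1(\iota)$ given in the preceding proposition. Since $\pp:\u_\d\times\u_\infty(\h)\to\u_{\d+\k}$ is a locally trivial fibre bundle with fibre $\f$, it yields the long exact sequence
\begin{equation*}
\pi_1(\f)\xrightarrow{\pi_1(\iota)}\pi_1(\u_\d)\times\pi_1(\u_\infty(\h))\xrightarrow{\pi_1(\pp)}\pi_1(\u_{\d+\k})\to\pi_0(\f).
\end{equation*}

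First I would justify that $\pi_0(\f)=0$, using the observation already made in the text: any $\zz=\{z_n\}$ with $|z_n|=1$ and $z_n\to 1$ admits a lift $z_n=e^{is_n}$ with $s_n\to 0$, giving the explicit continuous path $t\mapsto D_{e^{its_n}}$ connecting $\zz$ to $I$ inside $\f$. This makes $\pi_1(\pp)$ surjective, so that
\begin{equation*}
\pi_1(\u_{\d+\k})\;\simeq\;\bigl(\pi_1(\u_\d)\times\pi_1(\u_\infty(\h))\bigr)\big/\operatorname{Im}\pi_1(\iota).
\end{equation*}

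Next I would plug in the identifications already established: $\pi_1(\u_\d)\simeq\ell^\infty(\mathbb{N},\mathbb{Z})$ from the covering map $\exp_\d$, and $\pi_1(\u_\infty(\h))\simeq\mathbb{Z}$ from the standard computation cited via \cite{palais}. Finally I would invoke the preceding proposition, which computes
\begin{equation*}
\pi_1(\iota)(k_1,\dots,k_n,0,\dots)=\Bigl((k_1,\dots,k_n,0,\dots),-\sum_{j=1}^n k_j\Bigr),
\end{equation*}
so that $\operatorname{Im}\pi_1(\iota)$ consists precisely of those pairs $(\zz,k)\in\ell^\infty(\mathbb{N},\mathbb{Z})\times\mathbb{Z}$ for which $\zz\in\cc_{00}^\mathbb{Z}$ and $-\sum_{j\ge1}z_j=k$. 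Substituting this into the quotient above gives the claimed isomorphism.

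There is no real obstacle here; the only mild subtlety is making sure the exact sequence is truly exact at $\pi_1(\u_{\d+\k})$, which requires both the surjectivity of $\pi_1(\pp)$ (from $\pi_0(\f)=0$) and the injectivity of $\pi_1(\iota)$ (already recorded in the preceding proposition, ensuring that the image is a subgroup of the form indicated, rather than a more complicated quotient). Everything else is a direct substitution of the preceding identifications into the long exact sequence.
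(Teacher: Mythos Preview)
Your argument is correct and follows exactly the route the paper sets up: the paper itself gives no separate proof of this corollary, since it is meant to be read off directly from the exact sequence displayed just before, together with the identifications of $\pi_1(\f)$, $\pi_1(\u_\d)$, $\pi_1(\u_\infty(\h))$ and the computation of $\pi_1(\iota)$ in the preceding proposition. Your write-up simply makes explicit the first isomorphism theorem step that the paper leaves implicit.

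One small comment: your final remark about needing the injectivity of $\pi_1(\iota)$ is unnecessary. What the quotient requires is the \emph{image} of $\pi_1(\iota)$, and that image is already completely described by the formula in the proposition, whether or not the map is injective; injectivity plays no role in identifying the subgroup being quotiented out.
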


\subsection{ $K$-groups.}

Let us include a short comment of the $K$-groups of $\d+\k$. The extension
$$
\k \stackrel{\iota}\longrightarrow \d+\k \stackrel{\pi}\longrightarrow \ell^\infty /\cc_0
$$
induces the six term exact sequence of $K$-groups
\begin{equation}\label{Kgrupos}
\begin{array}{ccccc}
K_0(\k) & \stackrel{\iota_*}\longrightarrow & K_0(\d+\k) & \stackrel{\pi_*}\longrightarrow &  K_0(\ell^\infty)/\cc_0) \\
\partial\uparrow & \ & \ & \ & \downarrow\partial \\
K_1(\ell^\infty / \cc_0) & \stackrel{\pi_*}\longleftarrow & K_1(\d+\k) & \stackrel{\iota_*}\longleftarrow & K_1(\k) \end{array} .
\end{equation}
\begin{rem}

\noindent

\begin{enumerate}

\item
It is well known that $K_0(\k)=\mathbb{Z}$ and $K_1(\k)=0$.
\item
The $K$ groups of $\ell^\infty/\cc_0$ can be computed using standard techniques. We learned these facts from \cite{cortinas}: 
$$
K_0(\ell^\infty /\cc_0)=\ell^\infty(\mathbb{N},\mathbb{Z}) / \cc_{00}^{\mathbb{Z}},
$$
and $K_1(\ell^\infty / \cc_0)=0$.
\end{enumerate}
\end{rem}
Thus, 
\begin{coro}
$K_1(\d+\k)=0$. 
The group $K_0(\d+\k)$ contains a copy of $\mathbb{Z}$, and
$$
K_0(\d+\k) / \mathbb{Z} \simeq \ell^\infty(\mathbb{N},\mathbb{Z}) / \cc_{00}^{\mathbb{Z}}.
$$
\end{coro}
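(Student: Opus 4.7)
The proof is essentially a diagram chase through the six-term exact sequence (\ref{Kgrupos}), using the values of the $K$-groups of $\k$ and $\ell^\infty/\cc_0$ recalled in the preceding remark. The plan is to substitute those values into (\ref{Kgrupos}) and read off the consequences from exactness.

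First I would handle $K_1(\d+\k)$. The relevant portion of the sequence is
$$
K_1(\k) \stackrel{\iota_*}\longrightarrow K_1(\d+\k) \stackrel{\pi_*}\longrightarrow K_1(\ell^\infty/\cc_0).
$$
Since both ends vanish ($K_1(\k)=0$ and $K_1(\ell^\infty/\cc_0)=0$), exactness forces $K_1(\d+\k)=0$.

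For $K_0(\d+\k)$, I would look at the neighboring piece
$$
K_1(\ell^\infty/\cc_0) \stackrel{\partial}\longrightarrow K_0(\k) \stackrel{\iota_*}\longrightarrow K_0(\d+\k) \stackrel{\pi_*}\longrightarrow K_0(\ell^\infty/\cc_0) \stackrel{\partial}\longrightarrow K_1(\k).
$$
Since $K_1(\ell^\infty/\cc_0)=0$, the left boundary is zero and hence $\iota_*: K_0(\k)=\mathbb{Z} \to K_0(\d+\k)$ is injective; this produces the claimed copy of $\mathbb{Z}$ in $K_0(\d+\k)$. Since $K_1(\k)=0$, the right boundary is also zero, so $\pi_*$ is surjective onto $K_0(\ell^\infty/\cc_0) \simeq \ell^\infty(\mathbb{N},\mathbb{Z})/\cc_{00}^{\mathbb{Z}}$. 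Exactness at $K_0(\d+\k)$ then gives
$$
K_0(\d+\k)/\iota_*(K_0(\k)) \;\simeq\; \mathrm{Im}(\pi_*) \;=\; K_0(\ell^\infty/\cc_0),
$$
that is, $K_0(\d+\k)/\mathbb{Z} \simeq \ell^\infty(\mathbb{N},\mathbb{Z})/\cc_{00}^{\mathbb{Z}}$.

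There is no real obstacle here: the entire argument is a routine chase once one accepts the two non-trivial inputs cited in the remark (namely $K_0(\k)=\mathbb{Z}$, $K_1(\k)=0$, together with the computation of the $K$-groups of $\ell^\infty/\cc_0$ from \cite{cortinas}). It is worth noting that the extension does not split as an abelian group in general, so one should only claim the subgroup/quotient description of $K_0(\d+\k)$ rather than a direct sum decomposition.
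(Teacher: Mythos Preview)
Your proof is correct and follows exactly the same approach as the paper: substituting the known $K$-groups of $\k$ and $\ell^\infty/\cc_0$ into the six-term sequence~(\ref{Kgrupos}) and reading off the short exact sequence $0\to\mathbb{Z}\to K_0(\d+\k)\to \ell^\infty(\mathbb{N},\mathbb{Z})/\cc_{00}^{\mathbb{Z}}\to 0$. Your version is in fact more explicit than the paper's, and your closing remark about not claiming a splitting is a sensible caution.
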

\begin{proof}
 Plugging this information in (\ref{Kgrupos}), we get that $K_1(\d+\k)=0$ and that
$$
0\longrightarrow \mathbb{Z} \longrightarrow K_0(\d+\k) \longrightarrow  \ell^\infty(\mathbb{N},\mathbb{Z}) / \cc_{00}^{\mathbb{Z}}\longrightarrow 0.
$$
\end{proof}

{\sc (Esteban Andruchow)} {Instituto de Ciencias,  Universidad Nacional de Gral. Sar\-miento,
J.M. Gutierrez 1150,  (1613) Los Polvorines, Argentina and Instituto Argentino de Matem\'atica, `Alberto P. Calder\'on', CONICET, Saavedra 15 3er. piso,
(1083) Buenos Aires, Argentina.}

\noi e-mail: {\sf eandruch@ungs.edu.ar}

\bigskip

{\sc (Eduardo Chiumiento)} {Departamento de de Matem\'atica, FCE-UNLP, Calles 50 y 115, 
(1900) La Plata, Argentina  and Instituto Argentino de Matem\'atica, `Alberto P. Calder\'on', CONICET, Saavedra 15 3er. piso,
(1083) Buenos Aires, Argentina.}     
                                               
\noi e-mail: {\sf eduardo@mate.unlp.edu.ar}    

\bigskip                                   

{\sc (Alejandro Varela)} {Instituto de Ciencias,  Universidad Nacional de Gral. Sar\-miento,
J.M. Gutierrez 1150,  (1613) Los Polvorines, Argentina and Instituto Argentino de Matem\'atica, `Alberto P. Calder\'on', CONICET, Saavedra 15 3er. piso,
(1083) Buenos Aires, Argentina.}

\noi e-mail: {\sf avarela@ungs.edu.ar}

\end{document}